\newcommand{\R}{\mathds R}
\newcommand{\C}{\mathds C}
\newcommand{\Z}{\mathds Z}
\newcommand{\Dim}{\dim}
\newcommand{\Ker}{{\rm Ker}\,}
\renewcommand{\ker}{\Ker}
\newcommand{\Bsym}{\mathrm B_{\mathrm{sym}}}
\newcommand{\Fredsa}{\mathcal F_{\mathrm{sa}}}
\newcommand{\spfl}{\mathfrak{sf}}
\newcommand{\Codim}{\mathrm{codim}}
\newcommand{\Imm}{\mathrm{Im}\,}
\newcommand{\Gr}{\mathrm{Graph}}
\newcommand{\FP}{\mathcal{FP}}
\newcommand{\PR}[2]{P_{\scriptscriptstyle{#1}}^{\scriptscriptstyle{#2}}}
\newcommand{\iMaslov}{\mathrm i_{\scriptscriptstyle{\textrm{Maslov}}}}
\newcommand{\ind}{\mathrm{ind}}
\newcommand{\Ddt}{\tfrac{\mathrm D}{\mathrm dt}}
\newcommand{\Ecal}{\mathcal E}
\newcommand{\Comm}{\mathcal C}
\newcommand{\dist}{\mathrm{dist}}
\newcommand{\subsetneq}{\,\substack{\subset\\ \scriptscriptstyle{\neq}}\,}
\newcommand{\codim}{\mathop\mathrm{codim}\nolimits}
\renewcommand{\Im}{\mathop\mathrm{Im}\nolimits}
\newcommand{\pp}{^\perp}
\newcommand{\ppb}{^{\perp_B}}
\newcommand{\vpb}{\mathcal V\ppb}
\newcommand{\wt}{\widetilde T}
\newcommand{\wh}{\widehat T}
\newcommand{\ra}{\langle}
\newcommand{\la}{\rangle}
\newcommand{\bex}{\begin{example}\label}
\newcommand{\eex}{\end{example}}
\newcommand{\ie}{\begin{ex}}
\newcommand{\fe}{\end{ex}}
\newcommand{\rrbx}{\begin{rex}}
\newcommand{\rrex}{\end{rex}}
\newcommand{\bde}{\begin{defin}\label}
\newcommand{\ede}{\end{defin}}
\newcommand{\bpr}{\begin{prop}\label}
\newcommand{\epr}{\end{prop}}
\newcommand{\ble}{\begin{lem}\label}
\newcommand{\ele}{\end{lem}}
\newcommand{\bco}{\begin{cor}\label}
\newcommand{\eco}{\end{cor}}
\newcommand{\bre}{\begin{rem}\label}
\newcommand{\ere}{\end{rem}}
\newcommand{\beq}{\begin{equation}\label}
\newcommand{\eeq}{\end{equation}}
\newcommand{\ben}{\begin{enumerate}}
\newcommand{\een}{\end{enumerate}}
\newcommand{\bit}{\begin{itemize}}
\newcommand{\eit}{\end{itemize}}
\newcommand{\bth}{\begin{teo}\label}
\newcommand{\eth}{\end{teo}}
\renewcommand{\contentsline}[3]{\csname new#1\endcsname{#2}{#3}}
\newcommand{\newchapter}[2]{\bigskip\hbox to \hsize{\vbox{\advance\hsize by -.5cm\baselineskip=12pt\parfillskip=0pt\leftskip=2cm\noindent\hskip -2cm #1\leaders\hbox{.}\hfil\hfil\par}$\,$#2\hfil}}
\newcommand{\newsection}[2]{\medskip\hbox to \hsize{\vbox{\advance\hsize by -.5cm\baselineskip=12pt\parfillskip=0pt\leftskip=2.5cm\noindent\hskip -2cm #1\leaders\hbox{.}\hfil\hfil\par}$\,$#2\hfil}}
\newcommand{\newsubsection}[2]{\medskip\hbox to \hsize{\vbox{\advance\hsize by -.5cm\baselineskip=12pt\parfillskip=0pt\leftskip=3.5cm\noindent\hskip -2cm #1\leaders\hbox{.}\hfil\hfil\par}$\,$#2\hfil}}
\numberwithin{equation}{section}
\title[On a formula for the spectral flow]{On a formula for the spectral flow and its applications}
\author[P. Benevieri]{Pierluigi Benevieri}
\address{Dipartimento di Matematica Applicata,
Universit\`a degli Studi di Firenze, \hfill\break\indent Via S. Marta 3, 50139 Firenze, Italy}
\email{pierluigi.benevieri@unifi.it}
\author[P.\ Piccione]{Paolo Piccione}
\address{Departamento de Matem\'atica,
Universidade de S\~ao Paulo, \hfill\break\indent Rua do Mat\~ao
1010, CEP 05508-900, S\~ao Paulo, SP, Brazil}
\email{piccione@ime.usp.br}
\urladdr{http://www.ime.usp.br/\~{}piccione}
\thanks{P. P. is partially sponsored by CNPq and Fapesp.}
\date{December 6th, 2007}
\begin{document}


\theoremstyle{plain}\newtheorem*{teon}{Theorem}
\theoremstyle{definition}\newtheorem*{defin*}{Definition}
\theoremstyle{plain}\newtheorem{teo}{Theorem}[section]
\theoremstyle{plain}\newtheorem{prop}[teo]{Proposition}
\theoremstyle{plain}\newtheorem{lem}[teo]{Lemma}
\theoremstyle{plain}\newtheorem{cor}[teo]{Corollary}
\theoremstyle{definition}\newtheorem{defin}[teo]{Definition}
\theoremstyle{remark}\newtheorem{rem}[teo]{Remark}
\theoremstyle{plain} \newtheorem{assum}[teo]{Assumption}
\swapnumbers
\theoremstyle{definition}\newtheorem{example}{Example}[section]
\theoremstyle{plain} \newtheorem*{acknowledgement}{Acknowledgements}
\theoremstyle{definition}\newtheorem*{notation}{Notation}


\begin{abstract}
We consider a continuous path of bounded symmetric Fredholm bilinear forms
with arbitrary endpoints on a real Hilbert space, and we prove a formula that
gives the spectral flow of the path in terms of the spectral flow of
the restriction to a finite codimensional closed subspace.
We also discuss the case of restrictions to a continuous path of finite
codimensional closed subspaces.
As an application of the formula, we introduce the notion of spectral flow
for a periodic semi-Riemannian geodesic, and we compute its value in terms
of the Maslov index.
\end{abstract}

\maketitle

\tableofcontents


\begin{section}{Introduction}

The notion of \emph{spectral flow} plays a central role in several areas of Calculus of Variations,
including Morse theory and bifurcation theory; this is a fixed endpoint homotopy invariant integer associated to continuous
paths of Fredholm symmetric bilinear forms on Hilbert spaces.
In the modern formulations of Morse theory, it is now well understood that this notion
is the natural substitute for the notion of Morse index for critical points of strongly indefinite
variational problems. For instance, under suitable assumptions, the dimension of the intersection
of stable and unstable manifolds of critical points of a smooth functional $f$ defined on a Hilbert manifold
is given by the  spectral flow of the Hessian of $f$ along the flow lines of $\nabla f$ joining the two
critical points (see \cite{AbbMej2}). In bifurcation theory, jumps of the spectral flow detect
bifurcation from some given branch of critical points of a smooth curve of strongly indefinite smooth functionals
(see \cite{FitzPejsaRecht}). Starting from the celebrated work of T. Yoshida \cite{Y}, a series of
results have been proven in the literature relating the spectral flow of a path of
Dirac operators on partitioned manifolds to the geometry of the Cauchy data spaces
(see \cite{BB, BF2, BF1, BLP, BooWoj, BW,  CLM3, Nico}); low dimensional topological invariants can
be computed in terms of spectral flow (see \cite{CLM3,Y}).

A natural question in the above problems is to compute the spectral flow of \emph{restrictions}
to a given closed subspace, or more generally to a continuous path of closed subspaces,
of a continuous path of Fredholm bilinear forms. In Calculus of Variations, restriction
of the Hessian of smooth functionals corresponds to studying constrained variational problems.
For instance, the typical Fredholm forms arising from geometrical variational problems
are obtained from self-adjoint differential operators acting on sections of vector bundles
over (compact) manifolds with boundary satisfying suitable boundary conditions.
A formula for the spectral flow of restrictions in this case would allow to reduce
the study of a general boundary condition to the usually easier case of
Dirichlet conditions.

The aim of this paper is to prove formulas (Theorem~\ref{riduzionespettro}
and Proposition~\ref{thm:propsubs}) relating the spectral flow of a continuous
path of Fredholm symmetric bilinear forms to the spectral flow of their restriction
to a continuous path of finite codimensional closed subspaces, which is still Fredholm (Lemma~\ref{thm:restrFred}).

Let us recall that the spectral flow of a path of symmetric bilinear forms is given by an algebraic count of
eigenvalues passing through $0$
in the spectrum of the path of self-adjoint operators that represent the bilinear forms relatively to some
choice of inner products. However, a spectral theoretical approach to the restriction problem would not be
successful, due to the fact that restrictions of bilinear forms correspond to left multiplication
by a projection, and this operation in general perturbs the spectrum of a self-adjoint operator
in a quite complicated way. In order to prove the desired result, we will use a different characterization
of the spectral flow, which is given in terms of relative dimension of Fredholm pairs in the
Grassmannian of all closed subspaces of a Hilbert space. The spectral flow of a path of
Fredholm self-adjoint operators of the form symmetry plus compact is given by the relative
dimension of the negative spectral subspaces at the endpoints. One proves that a finite codimensional
reduction does not destroy the symmetry plus compact form of a Fredholm operator (Lemma~\ref{symmetryrestrictions});
moreover, the relative dimension of the negative eigenspaces behaves well with respect to
compact perturbations (Proposition~\ref{final}).

The case of restrictions to a varying family
of closed finite codimensional subspaces (Proposition~\ref{thm:propsubs}) is reduced to the case of a fixed subspace by means of
a special class of trivialization of the family.
We observe that one does not lose generality in considering only the case of paths of the form symmetry plus compact.
Namely, let us recall that the spectral flow is invariant by the cogredient action of the general linear
group of the Hilbert space on the space of self-adjoint Fredholm operators, and that all the orbits of
this action meet the affine space of compact perturbations of a fixed symmetry. By an elementary
principal fiber bundle argument, every path of class $C^k$, $k=0,\ldots,\infty,\omega$, in the space of
self-adjoint Fredholm operators is cogredient to a $C^k$ path of compact perturbations of a symmetry.

The paper is finalized with the discussion of an application of our reduction formula
in the context of semi-Riemannian geometry (Section~\ref{sec:applicationsRGeom}). We will consider an
orientation preserving periodic geodesic $\gamma$ in
a semi-Riemannian manifold $(M,g)$, and we will define its \emph{spectral flow}, as a suitable
generalization of the Morse index of the geodesic action functional, defined on the free loop space of $M$,
at the critical point $\gamma$. Observe that, unless
the metric tensor $g$ is positive definite, the standard Morse index of every nontrivial
closed geodesic is infinite. Unlike the fixed endpoint case, in the periodic case the
definition of spectral flow depends heavily on the choice of a periodic frame along
the geodesic. Two distinct choices of a periodic frame along a given closed geodesic produce
two paths of self-adjoint Fredholm operators that are in general neither fixed endpoint homotopic
nor cogredient. Recall in analogy that periodic solutions of Hamiltonian systems on general symplectic
manifolds do \emph{not} have a well defined Conley--Zehnder index (i.e., independent of the choice of a periodic
symplectic frame along the solution), unless one poses serious
restrictions on the topology of the underlying manifold.

An application of Theorem~\ref{riduzionespettro}  gives us
a formula for the spectral flow of a periodic geodesic (Theorem~\ref{Morseperiodic}), given
in terms of the \emph{Maslov index}  and the so-called \emph{concavity index} of the geodesic,
plus a certain degeneracy term. The Maslov index is a symplectic invariant which is associated
to the underlying fixed endpoint geodesic, while the concavity index is an integer invariant
of periodic solutions of Hamiltonian systems, which was introduced by M. Morse in the context
of Riemannian closed geodesics. A first, and somewhat surprising, consequence of the formula,
is that the spectral flow is well defined regardless of the choice of a periodic frame.
This fact is probably more interesting \emph{in se} than the formula itself.
Further developments of the theory are to be expected in the realm of Morse theory
for semi-Riemannian periodic geodesics, which at the present stage is a largely unexplored field
(see \cite{BMP} for the stationary Lorentzian case, or \cite{AbbMej} for the fixed endpoints Lorentzian case).
A natural conjecture would be that, under suitable nondegeneracy assumptions, the difference
of spectral flows at two distinct geodesics is equal to the dimension of the intersection
between the stable and the unstable manifolds of the gradient flow at the two critical points
in the free loop space.

An effort has been made in order to make the paper essentially self-contained.
In Section~\ref{sec:preliminaries} we recall a few preliminary basic facts on Fredholm
operators and bilinear forms; the central result is Proposition~\ref{isomorse}, that gives
an upper bound for the dimension of an isotropic subspace.
Section~\ref{sec:pairs} contains most of the basic facts in the theory
of Fredholm pairs and commensurable pairs of closed subspaces and relative dimension, with complete proofs.
The main result (Proposition~\ref{final}) is a formula giving the relative dimension of the negative eigenspaces of a self-adjoint
Fredholm operator and its restriction to any closed finite codimensional subspace of a Hilbert space.
Section~\ref{sec:spflow} contains material on the spectral flow, dealing mostly with the case of
paths of Fredholm operators that are compact perturbations of a fixed symmetry.
Theorem~\ref{riduzionespettro} gives a formula for the computation of the spectral flow of a path
of Fredholm symmetric bilinear forms (with arbitrary endpoints) in terms of the spectral flow of
its restriction to a finite codimensional closed subspaces, and some boundary terms. Observe that
both the path and/or its restriction is allowed to admit degeneracies at the endpoints.
In Proposition~\ref{thm:propsubs} we show how the  same result can be employed to study the case of restrictions to a continuous
path of closed finite codimensional subspaces. A discussion of the notion of continuity, or smoothness,
for a path of closed subspaces of a Hilbert space is presented in subsection~\ref{sub:contclosedsubspaces}.
Smoothness for a path is defined in terms of the smoothness of local trivializations for the
path (Definition~\ref{thm:defsmoothnessclosedsubspaces}); we show that this is equivalent to the smoothness of the corresponding path of orthogonal
projections in the Banach algebra of all bounded operators on the Hilbert spaces (Proposition~\ref{thm:smoothnessequivalent}).
This characterization of continuity yields several interesting facts. First, as it is shown
in Appendix~\ref{app:grassmannians}, one can find \emph{global} trivializations, second,
the trivialization can be chosen by a path of isometries of the Hilbert space.
Such trivialization will be called an orthogonal trivialization; orthogonal trivializations are special
cases of the so-called \emph{splitting} trivializations, that are employed in the
definition of spectral flow in the case of restriction to varying domains.
Section~\ref{sec:applicationsRGeom} contains the geometrical application of the theory.

\end{section}

\begin{section}{Preliminaries. Fredholm bilinear forms.}
\label{sec:preliminaries}


In this section we will recall some basic facts about the geometry of closed subspaces of a
Hilbert space, and, in addition, some properties of bounded symmetric bilinear forms
on Hilbert spaces. Basic references for this part are \cite[Chapter~2]{Ka},
\cite[Section~2]{BooWoj}, \cite[Section~1]{Nico} and  \cite[Chapter~4, \S~4]{Ab}.
Virtually, most of the material discussed is
well known to specialists; the authors' intention
is merely to fix notations and to state the results in
a way which is best suited for the purposes of the paper.

Throughout this paper we will denote by $\mathcal H$ a real separable Hilbert space, endowed
with inner product $\langle\cdot,\cdot\rangle$; by $\Vert\cdot\Vert$ we will indicate the relative norm.
Many of the results presented here will {\em not\/} indeed
depend on the choice of a specific Hilbert space
inner product. Complex extensions of the theory are also very likely to exist, but we
will not be concerned with the complex case here.

Given a closed subspace $\mathcal V$ of $\mathcal H$, $P_\mathcal V$ will stand for the orthogonal projection onto $\mathcal V$, and $\mathcal V\pp$ will denote the orthogonal complement of $\mathcal V$ in $\mathcal H$. Depending on the context we will use the same symbol $P_\mathcal V$ for the projection with target space $\mathcal H$ or $\mathcal V$.
Given two closed subspaces $\mathcal V$ and
 $\mathcal W$  of $\mathcal H$, $\PR{ \mathcal V}{\mathcal W}$ will represent the restriction to $\mathcal W$ of $P_\mathcal V$; an immediate calculation shows that the adjoint of $ \PR{ \mathcal V}{\mathcal W}$ is $\PR{ \mathcal W}{\mathcal V}$.

Let us warm up by singling out a few basic facts concerning closed subspaces,
orthogonal projections and compact operators, that will be used explicitly or
implicitly in our proofs.

\begin{lem}\label{thm:fatti}
Let $\mathcal V$ and $\mathcal W$  be closed subspaces of $\mathcal H$; the following statements hold true:
\begin{enumerate}
\item  \label{itm:nucleosomma}
$\Ker(P_\mathcal V+P_\mathcal W)=\mathcal V^\perp\cap \mathcal W^\perp$, and $\overline{\Imm(P_\mathcal V+P_\mathcal W)}=
\big(\Ker(P_\mathcal V+P_\mathcal W)\big)^\perp=\overline{\mathcal V+\mathcal W}$;
\item\label{itm:codimfinita} if $\Codim(\mathcal V+\mathcal W)<+\infty$, then $\mathcal V+\mathcal W$ is closed;
\item\label{itm:immI+K} if $K:\mathcal H\to\mathcal H$ is a compact linear operator, then
$(I+K)\mathcal V$ is closed;
\item\label{itm:codimsub} if $\mathcal V\supseteq \mathcal W^\perp$, then $\Codim  \mathcal V=\Codim_\mathcal W(\mathcal V\cap \mathcal W)$;
\item\label{itm:contienecodimfinita} if $\Codim\mathcal V<+\infty$, then any subspace of $\mathcal H$ containing $\mathcal V$  is closed;
\item\label{itm:dimfinita} If $\Dim\mathcal V<+\infty$, then $\Dim\big((\mathcal V+\mathcal W^\perp)\cap \mathcal W\big)<+\infty$.
\end{enumerate}
\end{lem}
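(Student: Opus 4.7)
The plan is to verify the six items essentially independently; only item (2) requires any real work, and it invokes the open mapping theorem. The other items reduce to standard manipulations with projections, quotient maps, orthogonal decompositions, and the Fredholm alternative. For (1) I would observe that $P_\mathcal V+P_\mathcal W$ is positive and self-adjoint, since $\langle(P_\mathcal V+P_\mathcal W)x,x\rangle=\Vert P_\mathcal V x\Vert^2+\Vert P_\mathcal W x\Vert^2$; hence $x\in\Ker(P_\mathcal V+P_\mathcal W)$ forces $P_\mathcal V x=P_\mathcal W x=0$, giving the kernel identity. The second equality then follows from the range-kernel duality $\overline{\Imm T}=(\Ker T)\pp$ for bounded self-adjoint $T$, combined with the elementary identity $(\mathcal V\pp\cap\mathcal W\pp)\pp=\overline{\mathcal V+\mathcal W}$.

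Item (2) is the heart of the lemma and the only nontrivial step. I would consider the bounded map $T:\mathcal V\oplus\mathcal W\to\mathcal H$, $T(v,w)=v+w$, with the domain given the direct sum Hilbert structure. Its kernel $\{(u,-u):u\in\mathcal V\cap\mathcal W\}$ is closed, so $T$ factors through an injective continuous map $\hat T:(\mathcal V\oplus\mathcal W)/\Ker T\to\mathcal H$ with image $\mathcal V+\mathcal W$. Under the hypothesis $\Codim(\mathcal V+\mathcal W)<+\infty$ I pick a finite dimensional algebraic complement $F$ of $\mathcal V+\mathcal W$ in $\mathcal H$, automatically closed since $\Dim F<+\infty$; then $\tilde T(x,f):=\hat T(x)+f$ defines a continuous linear bijection between the Banach spaces $\big((\mathcal V\oplus\mathcal W)/\Ker T\big)\oplus F$ and $\mathcal H$. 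By the open mapping theorem $\tilde T$ is a topological isomorphism, so the image of the closed subspace $((\mathcal V\oplus\mathcal W)/\Ker T)\oplus\{0\}$, namely $\mathcal V+\mathcal W$, is closed in $\mathcal H$.

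For (3) I apply the Fredholm trick to $A=(I+K)|_\mathcal V:\mathcal V\to\mathcal H$. Its kernel is contained in the finite dimensional $\Ker(I+K)$, so $\mathcal V$ splits orthogonally as $\Ker A\oplus\mathcal V'$ with $A|_{\mathcal V'}$ injective. Were $A$ not bounded below on $\mathcal V'$, a sequence $v_n\in\mathcal V'$ with $\Vert v_n\Vert=1$ and $Av_n\to 0$ would, by compactness of $K$, admit along a subsequence $Kv_n\to -z$; then $v_n=Av_n-Kv_n\to z$, forcing $z\in\mathcal V'$, $\Vert z\Vert=1$, and $Az=0$, a contradiction. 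Boundedness below implies closed range.

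The remaining items are direct. For (4), the hypothesis $\mathcal V\supseteq\mathcal W\pp$ yields the orthogonal decomposition $\mathcal V=(\mathcal V\cap\mathcal W)\oplus\mathcal W\pp$, from which the inclusion $\mathcal W\hookrightarrow\mathcal H$ induces the isomorphism $\mathcal W/(\mathcal V\cap\mathcal W)\cong\mathcal H/\mathcal V$. For (5), the continuous quotient map $\pi:\mathcal H\to\mathcal H/\mathcal V$ sends any subspace $U\supseteq\mathcal V$ to a subspace of the finite dimensional $\mathcal H/\mathcal V$, necessarily closed, whence $U=\pi^{-1}(\pi(U))$ is closed. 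For (6), writing an arbitrary element $w\in(\mathcal V+\mathcal W\pp)\cap\mathcal W$ as $w=v+w'$ with $v\in\mathcal V$ and $w'\in\mathcal W\pp$ and applying $P_\mathcal W$ gives $w=P_\mathcal W v$; hence $(\mathcal V+\mathcal W\pp)\cap\mathcal W=P_\mathcal W(\mathcal V)$, of dimension at most $\Dim\mathcal V<+\infty$.
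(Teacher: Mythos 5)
Your proof is correct and follows essentially the same route as the paper: the kernel computation for $P_{\mathcal V}+P_{\mathcal W}$, the open mapping theorem for item (2) (the paper cites the general fact that a bounded operator with finite codimensional image has closed range, which you prove directly), the quotient map argument for (5), and the identification of $(\mathcal V+\mathcal W^\perp)\cap\mathcal W$ with $P_{\mathcal W}(\mathcal V)$ for (6), where you compute directly what the paper obtains via the adjoint of $\PR{\mathcal W}{\mathcal V}$. You additionally supply standard proofs (Riesz theory for (3), the orthogonal decomposition $\mathcal V=(\mathcal V\cap\mathcal W)\oplus\mathcal W^\perp$ for (4)) of the two items the paper leaves unproved, and these are correct.
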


\begin{proof}
To prove \eqref{itm:nucleosomma} observe in first place that
$\Ker(P_\mathcal V+P_\mathcal W)\supseteq \mathcal V^\perp\cap \mathcal W^\perp$. If $x\in \Ker(P_\mathcal V+P_\mathcal W)$, then
\[
\Vert P_\mathcal V x\Vert^2=\langle P_\mathcal V x,x\rangle=-\langle P_\mathcal Wx,x\rangle=-
\Vert P_\mathcal Wx\Vert^2,
\]
hence $\Vert P_\mathcal Vx\Vert=\Vert P_\mathcal Wx\Vert=0$, and $x\in \mathcal V^\perp\cap \mathcal W^\perp$.
The second equality in \eqref{itm:nucleosomma} follows immediately.

Statement \eqref{itm:codimfinita} follows from the general fact
that, given a bounded linear operator between Banach spaces
$T:\mathcal F\to \mathcal G$, having image of finite codimension, then $\Imm T$ is closed.
This is an easy application of the Open Mapping Theorem.
In the case, $\mathcal V+\mathcal W$ is the image of the bounded operator from
 $\mathcal V\times \mathcal W$ to $\mathcal H$, given by $(x,y)\mapsto x+y$.

The proof of \eqref{itm:contienecodimfinita} goes as follows. Let $\mathcal U$ be any subspace of $\mathcal H$ containing
$ \mathcal V$, and consider the
quotient map $\pi:\mathcal H\to \mathcal H/\mathcal V$. Since this quotient is finite
dimensional, then $\pi(\mathcal U)$ is closed, and, since $\mathcal U\supseteq \mathcal V=\Ker\pi$, then
  $\mathcal U$ is saturated, i.e., $\mathcal U=\pi^{-1}(\pi(\mathcal U))$, which implies that
$\mathcal U$ is closed.

To prove \eqref{itm:dimfinita} consider $\PR{ \mathcal W}{\mathcal V}:\mathcal V\to \mathcal W$, which clearly has finite dimensional, hence closed, image. Then
\[
\begin{array}{l}
\Imm\PR{ \mathcal W}{\mathcal V}=\overline{\Imm\PR{ \mathcal W}{\mathcal V}}=\big(\Ker(
\PR{ \mathcal W}{\mathcal V})^*\big)^\perp=(\Ker\PR{ \mathcal V}{\mathcal W})^\perp=\smallskip\\
(\mathcal V^\perp\cap \mathcal W)^\perp
\cap \mathcal W=(\mathcal V+ \mathcal W^\perp)\cap \mathcal W.
\end{array}
\]
This concludes the proof.
\end{proof}

Moreover, an application of \cite[Ch.~4, \S~4, Theorem~4.2]{Ka} yields the following Lemma.
\begin{lem}
\label{thm:sommachiusa}
Let $\mathcal V,\mathcal W$ be closed subspaces of $\mathcal H$.
Then $\mathcal V+\mathcal W$ is closed if and only if the operator $\PR{ \mathcal V^\perp}{\mathcal W}:\mathcal W\to \mathcal V^\perp$
has closed image.
\end{lem}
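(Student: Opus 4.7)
My plan is to prove the equivalence directly, by exhibiting $\mathcal V+\mathcal W$ as an orthogonal direct sum built out of $\mathcal V$ and the image of $\PR{\mathcal V^\perp}{\mathcal W}$.

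The starting observation is that every $w\in\mathcal W$ decomposes as $w = P_\mathcal V w + P_{\mathcal V^\perp}w$, with $P_\mathcal V w\in\mathcal V$. Plugging this into the definition of $\mathcal V+\mathcal W$, I would deduce the identity
\[
\mathcal V+\mathcal W \;=\; \mathcal V + \Imm\bigl(\PR{\mathcal V^\perp}{\mathcal W}\bigr).
\]
Since $\Imm(\PR{\mathcal V^\perp}{\mathcal W})\subseteq \mathcal V^\perp$, the sum on the right-hand side is actually an \emph{orthogonal} direct sum $\mathcal V\oplus\Imm(\PR{\mathcal V^\perp}{\mathcal W})$, with $\mathcal V$ and the second summand mutually orthogonal.

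The key lemma I would then invoke (or prove in a line) is: if $\mathcal A,\mathcal B\subseteq\mathcal H$ are subspaces with $\mathcal A$ closed and $\mathcal B\subseteq\mathcal A^\perp$, then $\mathcal A+\mathcal B$ is closed in $\mathcal H$ if and only if $\mathcal B$ is closed. The nontrivial direction uses the orthogonality: for any Cauchy sequence $a_n+b_n$ one has $\Vert(a_n+b_n)-(a_m+b_m)\Vert^2 = \Vert a_n-a_m\Vert^2 + \Vert b_n-b_m\Vert^2$, whence $\{a_n\}$ and $\{b_n\}$ are separately Cauchy, and their limits lie respectively in $\mathcal A$ (closed by assumption) and in $\overline{\mathcal B}$. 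Conversely, if $\mathcal A+\mathcal B$ is closed then $\mathcal B=(\mathcal A+\mathcal B)\cap\mathcal A^\perp$ is the intersection of two closed sets.

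Applying this to $\mathcal A=\mathcal V$ and $\mathcal B=\Imm(\PR{\mathcal V^\perp}{\mathcal W})$ immediately gives the desired equivalence. I do not anticipate any real obstacle: the content is entirely captured by the orthogonal decomposition trick, and the cited Theorem~4.2 in Kato, Ch.~4, \S4, can be used alternatively as a black box (it characterizes closedness of the range of a bounded operator in terms of the closedness of certain associated subspaces), but the direct argument above seems shorter and more transparent in this Hilbert space setting.
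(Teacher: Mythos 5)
Your argument is correct, and it takes a genuinely different route from the paper. The paper simply quotes Kato's Theorem 4.2 (Ch.~4, \S~4): the sum $\mathcal V+\mathcal W$ is closed if and only if $\Vert P_{\mathcal V^\perp}w_\perp\Vert\ge c\Vert w_\perp\Vert$ for some $c>0$, where $w_\perp$ is the component of $w\in\mathcal W$ orthogonal to $\mathcal V\cap\mathcal W$; it then identifies this lower bound with closedness of the range of $\PR{\mathcal V^\perp}{\mathcal W}$ (the standard ``bounded below modulo the kernel'' characterization, i.e.\ positivity of the constant $\gamma(L)$ recalled in Remark~\ref{thm:remgamma}). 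You instead bypass Kato entirely: the identity $\mathcal V+\mathcal W=\mathcal V+\Imm\bigl(\PR{\mathcal V^\perp}{\mathcal W}\bigr)$ exhibits the sum as an \emph{orthogonal} direct sum of $\mathcal V$ with a subspace of $\mathcal V^\perp$, and your auxiliary lemma (Pythagoras for one direction, $\mathcal B=(\mathcal A+\mathcal B)\cap\mathcal A^\perp$ for the other) is elementary and complete; I checked both the set identity and the lemma, and they hold. What your approach buys is a self-contained Hilbert-space proof with no quantitative input, plus the explicit decomposition $\mathcal V+\mathcal W=\mathcal V\oplus\Imm\bigl(\PR{\mathcal V^\perp}{\mathcal W}\bigr)$, which is in the same spirit as the identification $\Imm\PR{\mathcal V^\perp}{\mathcal W}=(\mathcal V+\mathcal W)\cap\mathcal V^\perp$ used later in Proposition~\ref{thm:coppieoperatori}. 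What the paper's route buys is brevity and coherence with the rest of Section~\ref{sec:preliminaries}, where Kato's minimal-gap constant $\gamma(\mathcal V,\mathcal W)$ is introduced anyway and the equality $\gamma(\mathcal V,\mathcal W)=\gamma\bigl(\PR{\mathcal V^\perp}{\mathcal W}\bigr)$ gives a second one-line proof of the same lemma.
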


\begin{proof}
For $w\in \mathcal W$, set $w_o=P_{\mathcal V\cap \mathcal W}w$ and $w_\perp=w-w_o$;
the result of \cite[Ch.~4, \S~4, Theorem~4.2]{Ka} tells us that
$\mathcal V+\mathcal W$ is closed if and only if there exists $c>0$ such that
$\Vert P_{\mathcal V^\perp}w_\perp\Vert\ge c\Vert w_\perp\Vert$ for all
$w\in \mathcal W$.
In turn, this latter condition is equivalent to the
fact that  $P_{\mathcal V^\perp}\vert_\mathcal W:\mathcal W\to \mathcal V^\perp$ has closed image.
\end{proof}

\begin{rem}\label{thm:remgamma}
Given closed subspaces $\mathcal V,\mathcal W$ of a Banach space, let us recall Kato's
definition of the constant $\gamma(\mathcal V,\mathcal W)\in\left[0,1\right]$:
\[
\gamma(\mathcal V,\mathcal W)=\inf_{u\in \mathcal V\setminus \mathcal W}\frac{\dist(u,\mathcal W)}{\dist(u,\mathcal V\cap \mathcal W)}.
\]

It is proven in \cite[Ch.~4, \S~4, Theorem~4.2]{Ka} that $\mathcal V+\mathcal W$ is
closed if and only if $\gamma(\mathcal V,\mathcal W)>0$. Similarly, if $L$ is a bounded linear operator between Banach spaces,
then the image of $L$ is closed if and only if the constant
\[
\gamma(L)=\inf_{u\not\in\Ker L}\frac{\Vert Lu\Vert}{\dist\big(u,\Ker L\big)}
\]
is positive. An immediate calculation shows that if $\mathcal V,\mathcal W$ are closed subspaces
of a Hilbert space $\mathcal H$, then $\gamma(\mathcal V,\mathcal W)=\gamma\big(\PR{\mathcal V^\perp}{\mathcal W}\big)$;
from this fact it follows immediately a proof of Lemma~\ref{thm:sommachiusa}.
\end{rem}

Let now $B$ be a continuous bilinear form on $\mathcal H$ and $T:\mathcal H\to \mathcal H$ the continuous linear operator uniquely associated with $B$, that is,
\[
B(x,y)=\ra Tx, y\la, \quad \forall x,y\in \mathcal H.
\]

We define
\[
\ker B=\{x\in\mathcal H: B(x,y)=0, \; \forall y\in \mathcal H\}.
\]
It is immediate to see that $\ker B=\ker T$. If $\ker B=\{0\}$, then $B$ is said to be \emph{nondegenerate}.

If a continuous bilinear form $B$ is symmetric, then $T$ is \emph{self-adjoint}, that is, $\ra Tx,y\la = \ra x,Ty \la $, for all $x,y\in \mathcal H$.

\bde{morsedef}
Given a continuous bilinear form $B$, the \emph{Morse index} of $B$ is the (possibly infinite) integer number
\[
\mathrm n_-(B)= \sup\big\{\dim \mathcal V: \; B|_{\mathcal V\times \mathcal V} \text{ is negative definite}\big\}.
\]
We denote by $\mathrm n_+(B)$ the Morse index of $-B$, also called the \emph{Morse coindex} of $B$. Of course one has
\[
\mathrm n_+(B)= \sup\big\{\dim \mathcal V: \; B|_{\mathcal V\times \mathcal V} \text{ is positive definite}\big\}.
\]
\ede

\bde{fredholmform} A symmetric continuous bilinear form $B$ on $\mathcal H$, associated with a (self-adjoint) Fredholm operator, is called a \emph{symmetric Fredholm form} on $\mathcal H$.
\ede
A self-adjoint Fredholm operator has null index.

\noindent\textbf{Standing assumption.} From now on $B$ will denote a symmetric Fredholm form on $\mathcal H$
and $T$ will be the self-adjoint Fredholm operator $T$ associated with $B$.

\medskip

By the spectral theory of the self-adjoint Fredholm operators, there exists a unique orthogonal splitting of $\mathcal H$ induced by $B$,
\beq{decspet}
H=V^-(T)\oplus V^+(T)\oplus \ker T,
\eeq
such that $V^-(T)$ and  $V^+(T)$ are both $T$-invariant,  $B|_{V^-(T)\times V^-(T)}$ is negative definite and $B|_{V^+(T)\times V^+(T)}$ is positive definite.

In addition, since $V^-(T)$ and  $V^+(T)$ are $T$-invariant and orthogonal, they are also $B$-\emph{orthogonal}, that is, $B(x,y)=0$ for any $x\in V^-(T)$ and any  $y\in V^+(T)$.

With a slight abuse of notation, we will refer to $V^-(T)$ and  $V^+(T)$ respectively as the \emph{negative} and the \emph{positive eigenspaces} of $B$.

\bre{morsenegative}
Observe that the Morse index of a symmetric Fredholm form $B$ coincides with the (possibly infinite) dimension of the negative eigenspace $V^-(T)$.
\ere

Given a subspace $\mathcal V$ of $\mathcal H$, we define the $B$-\emph{orthogonal complement} of $\mathcal V$ as the subspace of $\mathcal H$
\[
\mathcal V\ppb=\{x\in \mathcal H: B(x,y)=0,\; \forall y\in \mathcal V\}.
\]

\bre{osssulortog} Given a closed subspace $\mathcal V$ of $\mathcal H$, we have the following properties.
\begin{itemize}
\item[i)] $\vpb$ is closed and $\ker T\subseteq \vpb$, the proof is immediate.
\item[ii)] If $\mathcal V$ has finite codimension, then $\vpb$ is finite dimensional. Indeed,
\[
\vpb=\{x\in \mathcal H:\ra Tx,y\la=0,\; \forall y\in \mathcal V\}=\{x\in \mathcal H:\ra x,Ty\la=0,\; \forall y\in \mathcal V\}.
\]
That is, $\vpb$ is orthogonal to $T({\mathcal V})$, which has finite codimension since $T$ is Fredholm and $\mathcal V$ has finite codimension. More precisely,
\[
\dim \vpb=\codim \mathcal V +\dim \big(\ker T\cap{\mathcal V}\big).
\]
\item[iii)] Analogously, if $\mathcal V$ has finite dimension, then $\vpb$ has finite codimension coinciding with $\dim \mathcal V-\dim \ker T|_\mathcal V$.
\item[iv)] In general, $\mathcal V+\vpb\neq\mathcal  H$, even when $B$ is  nondegenerate.
\end{itemize}
\ere

\ble{thm:restrFred}
If $\mathcal V$ is a closed subspace of $\mathcal H$, having finite
codimension, then the restriction $B\vert_{\mathcal V\times\mathcal V}$ is Fredholm.
\ele

\begin{proof}
The kernel of $B\vert_{\mathcal V\times\mathcal V}$ is given by $\mathcal V\cap\mathcal V^{\perp_B}$, which is finite dimensional.
If $T$ is the Fredholm self-adjoint operator that represents $B$, then $B\vert_{\mathcal V\times\mathcal V}$ is represented by $P_{\mathcal V}\circ T\vert_{\mathcal V}$,
whose image contains $T(\mathcal V)\cap\mathcal V$, which has finite codimension.
\end{proof}

Let $\mathcal V$ be a closed subspace of $\mathcal H$. Denote by $\wt:\mathcal V\to \mathcal V$ the operator associated  with $B|_{\mathcal V\times \mathcal V}$ and by $T_2:\vpb\to \vpb$  the operator associated with  $B|_{\vpb\times \vpb}$. Notice that $T_2=P_{\vpb}\circ T|_{\vpb}$.

\ble{bortsommadir} In the above notation we have the following results.
\begin{enumerate}
\item  If $\mathcal V\cap \vpb=\{0\}$, then $B|_{\mathcal V\times \mathcal V}$ is nondegenerate.
\item If $\mathcal V$ is finite dimensional or finite codimensional and $B|_{\mathcal V\times \mathcal V}$ is nondegenerate, then $\mathcal H=\mathcal V\oplus \vpb$.
\item $\ker \wt$ and $\ker T$ are contained in $\ker T_2$. If in particular $\mathcal H=\mathcal V+\vpb$ (not necessarily direct sum), then $\ker T=\ker T_2$.
\item If $B$ is nondegenerate and $\mathcal V+\vpb=\mathcal H$, then $\mathcal V\cap \vpb=\{0\}$.
\item If $\mathcal V$ is finite dimensional or finite codimensional, then
\[
(\mathcal V\cap \vpb)\ppb=\mathcal V+\vpb.
\]
\end{enumerate}
\ele

\begin{proof}
(1) If $x\in \ker \wt$, then $Tx$ is orthogonal to $\mathcal V$, that is, $x\in \vpb$. Hence $x=0$ and $B|_{\mathcal V\times \mathcal V}$ is nondegenerate.

\medskip

(2) Let $v\in \mathcal V\cap \vpb$ be given. As $v \in \vpb$, it is orthogonal to $T(\mathcal V)$, that is, $0=\ra Tv',v \la= \ra v',Tv \la$ for any $v'\in \mathcal V$. This implies that $Tv$ is orthogonal to $\mathcal V$
and so $\wt v=0$ ($v$ belongs to $\mathcal V$, hence $\wt v$ is well defined). Thus $v=0$ since $B|_{\mathcal V\times \mathcal V}$ is nondegenerate. Notice that the proof that $\mathcal V\cap \vpb=\{0\}$ does not require any information about the dimension of $\mathcal V$.

Now, if $\mathcal V$ has finite codimension, then $\vpb$ has finite dimension. Hence, if $\mathcal V$ is finite dimensional or finite codimensional, then $\mathcal V+\vpb$ is closed being the sum of two closed subspaces of $\mathcal H$ such that one of them has finite dimension.

To show that $\mathcal V+\vpb =\mathcal H$ consider an element $v$ of the orthogonal complement of $\mathcal V+\vpb$ in $\mathcal H$. We have that $v\in T(\mathcal V)$ since this latter coincides with $(\vpb)\pp$. Let $x\in \mathcal V$ be such that $Tx=v$. As $Tx$ is orthogonal to $\mathcal V$, then $\wt x=0$ and this implies that $x=0$ since $\wt$ is injective. Therefore, $v=0$ and we have finally
$\mathcal H=\mathcal V\oplus\vpb$.

\medskip

(3) If $x\in \ker T$, then $\ra Tx,y \la=0$ for each $y\in \mathcal V$, that is, $x\in \vpb$ and $T_2 x$ is well defined. As $Tx=0$, trivially $T_2x=0$, that is, $\ker T\subseteq \ker T_2$. Given $x\in \ker \wt$, in the decomposition $\mathcal H=\mathcal V\oplus \mathcal V\pp$ write $Tx=\wt x+P_{\mathcal V\pp}Tx=P_{\mathcal V\pp}Tx$. Hence $\ra Tx,y \la=0$ for each $y\in \mathcal V$, that is, $x\in \vpb$ and $T_2 x$ is well defined. In the decomposition $\mathcal H = (\vpb)\pp\oplus \vpb $ denote by $Q$ the orthogonal projection onto $(\vpb)\pp$. Then,
\[
0=\ra Tx,y \la = \ra QTx+T_2x,y \la = \ra T_2x,y \la, \quad \forall y\in \vpb.
\]
Then $T_2x=0$, that is, $\ker \wt\subseteq \ker T_2$.

In the particular case when $\mathcal H=\mathcal V+\vpb$, let $x\in \ker T_2$ be given. Given any $z\in \mathcal H$, let us write $z=z_1+z_2$, where $z_1\in \mathcal V$ and $z_2\in \vpb$. We have
\[
\ra Tx,z\la=\ra Tx,z_1\la + \ra Tx,z_2\la.
\]
The product $\ra Tx,z_1\la$ vanishes since $x\in \vpb$ and $z_1\in \mathcal V$, and the term $\ra Tx,z_2\la$ is zero since $T_2x=0$, that is $Tx\in (\vpb)\pp$. Hence $\ra Tx,z\la=0$ for any $z\in \mathcal H$, that means $Tx=0$.

\medskip

(4) Let $x\in\mathcal  V\cap \vpb$ be given. Given any $z\in \mathcal H$,  write $z=z_1+z_2$, where $z_1\in \mathcal V$ and $z_2\in \vpb$. Then
\[
\ra Tx,z \la = \ra Tx,z_1 \la + \ra Tx, z_2 \la=0.
\]
In fact, $\ra Tx,z_1 \la=0$ since $x\in \vpb$ and $z_1\in \mathcal V$, while $\ra Tx,z_2 \la=0$ since $x\in \mathcal V$ and $z_2\in \vpb$. Hence $\ra Tx,z \la =0$ for any $z\in\mathcal  H$ and this implies that $x=0$ since $B$ is nondegenerate.

\medskip

(5) It is a consequence of the following properties shown (in a more general setting) in \cite{BMP}: \emph{given two closed subspaces $S_1$ and $S_2$ of $\mathcal H$, then}
\begin{itemize}
\item [i)] $(S_1+S_2)\ppb=S_1\ppb\cap S_2\ppb$,
\item [ii)] $(S_1\ppb)\ppb=S_1+\ker T$.
\end{itemize}

First of all one can show that
\[
(\mathcal V\cap \vpb)\ppb=((\mathcal V+\ker T)\cap \vpb)\ppb
\]
(even if $\mathcal V\cap \vpb$ could be strictly contained in $(\mathcal V+\ker T)\cap \vpb$; this is the case when $\mathcal V$ does not contain $\ker T$). Indeed, fix an element $x\in (\mathcal V\cap \vpb)\ppb $ and let $w\in (\mathcal V+\ker T)\cap \vpb$ be given. One can write $w=v+k$, where $v\in \mathcal V$ and $k\in\ker T$. Since $\ker T\subseteq\vpb$, then $k$ belongs to $\vpb$, as $w$, and thus $v\in \vpb $ as well. That is, $v\in \mathcal V\cap \vpb$ and this implies
\[
\ra Tx,w \la=\ra Tx,v \la+\ra Tx,k \la=0,
\]
since $\ra Tx,v \la$ and $\ra Tx,k \la$ both vanish. Therefore, $x\in ((\mathcal V+\ker T)\cap \vpb)\ppb$, that is, $(\mathcal V\cap \vpb)\ppb\subseteq((\mathcal V+\ker T)\cap \vpb)\ppb$.

The inclusion $((\mathcal V+\ker T)\cap \vpb)\ppb\subseteq(\mathcal V\cap \vpb)\ppb$ follows immediately from the inclusion $\mathcal V\cap \vpb\subseteq(\mathcal V+\ker T)\cap \vpb$.

Let us now conclude the proof of the statement (5). By the previous item ii) we have $\mathcal V+\ker T=(\vpb)\ppb$. Hence, by i), $(\mathcal V+\ker T)\cap \vpb=(\vpb+\mathcal V)\ppb$. By ii), $((\vpb+\mathcal V)\ppb)\ppb=\vpb+\mathcal V$, recalling that $\ker T\subseteq \vpb$.

Summarizing the arguments,
\[
(\mathcal V\cap \vpb)\ppb=((\mathcal V+\ker T)\cap \vpb)\ppb=\vpb+\mathcal V
\]
and the proof is complete.
\end{proof}


\bre{strictly}
If $\mathcal V+\vpb$ is strictly contained in $\mathcal H$, $\ker T$ does not necessarily coincides with $\ker T_2$ and $\mathcal  V\cap \vpb$ is not necessarily empty. Examples, even in finite dimension, could be easily provided and left to the reader.
\ere

\bde{isotropic}
A subspace $\mathcal Z$ of $\mathcal H$ is said to be \emph{isotropic} for $B$ if $B(z,z)=0$ for any $z\in \mathcal Z$.
\ede

Any subspace of $\ker T$ is clearly isotropic, but one can easily find examples of symmetric Fredholm forms having isotropic subspaces not contained in the kernel of the associated operator.

\ble{isononkernel}
Suppose that $B$ admits an isotropic subspace $\mathcal Z$ which is not contained in $\ker T$. Then $B$ is indefinite, that is, there exist $x,y\in \mathcal H$ such that $\ra Tx,x \la>0$ and $\ra Ty,y \la<0$.
\ele

\begin{proof}
Let $v\in \mathcal H$ such that $\ra Tv,v \la=0$ and $w:=Tv\neq 0$. For any $\alpha \in \R$ we have
\[
\ra T(\alpha v+w),\alpha v+w \la=2 \alpha ||w||^2+\ra Tw,w \la.
\]
The claim follows  choosing $x=\alpha_1 v+w$ and $y=\alpha_2 v+w$, with any $\alpha_1>-\dfrac{|\ra Tw,w \la|}{2||w||^2}$ and any $\alpha_2<-\dfrac{|\ra Tw,w \la|}{2||w||^2}$.
\end{proof}

\begin{cor}\label{ortpos}
If $B$ is positive (resp. negative) semidefinite, then $B|_{(\ker T)\pp\times (\ker T)\pp}$ is positive (resp. negative) definite.
\end{cor}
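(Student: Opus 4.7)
The statement is an almost immediate consequence of Lemma \ref{isononkernel}, which says that a symmetric Fredholm form having an isotropic subspace not contained in the kernel of its associated operator must be indefinite. The plan is to use this contrapositively.

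I would first treat the positive semidefinite case. Take any $x\in(\ker T)^\perp$ with $B(x,x)=0$ and aim to show $x=0$. Suppose by contradiction that $x\neq 0$. Then the one-dimensional subspace $\mathrm{span}(x)$ is isotropic: by bilinearity and symmetry, $B(\alpha x,\beta x)=\alpha\beta B(x,x)=0$. Since $x\in(\ker T)^\perp$ and $x\neq 0$, this isotropic subspace is \emph{not} contained in $\ker T$ (otherwise $x\in \ker T\cap(\ker T)^\perp=\{0\}$). Lemma \ref{isononkernel} then forces $B$ to be indefinite, contradicting the hypothesis that $B$ is positive semidefinite. Hence $B(x,x)>0$ for every nonzero $x\in(\ker T)^\perp$, i.e.\ $B|_{(\ker T)^\perp\times(\ker T)^\perp}$ is positive definite.

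The negative semidefinite case follows at once by applying the positive case to $-B$, which is represented by the self-adjoint Fredholm operator $-T$ with the same kernel.

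There is no real obstacle here — the only subtlety to keep an eye on is ensuring that the isotropic line one constructs is genuinely not contained in $\ker T$, which is precisely where the hypothesis $x\in(\ker T)^\perp$ is used.
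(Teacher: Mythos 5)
Your argument is correct and is exactly the intended one: the paper states Corollary~\ref{ortpos} without proof immediately after Lemma~\ref{isononkernel}, the implicit reasoning being precisely your contrapositive application of that lemma to the isotropic line spanned by a nonzero $x\in(\ker T)\pp$ with $B(x,x)=0$. The reduction of the negative case to the positive one via $-B$ (represented by $-T$, with the same kernel) is also fine.
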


Lemma \ref{isononkernel} above  allows us to prove the next result connecting the Morse index of $B$ and a given isotropic space $\mathcal Z$.

\bpr{isomorse}
If $\mathcal Z$ is an isotropic subspace of $\mathcal H$, then
\[
\dim \mathcal Z \leq  \mathrm n_-(B)+ \dim (\mathcal Z\cap \ker T) \quad \text{and} \quad \dim \mathcal Z \leq \mathrm n_+(B)+ \dim (\mathcal Z\cap \ker T).
\]
\epr

\begin{proof}
Let us prove just the first inequality, the proof of the second one is analogous.
If $\mathcal Z$ is infinite dimensional (this is the case when, for instance, it is not closed), one has $\mathrm n_-(B)=+\infty$ and this could be easily verified using the proof of the above Lemma \ref{isononkernel}. In this case the inequality $\dim \mathcal Z \leq  \mathrm n_-(B)+ \dim (\mathcal Z\cap \ker T)$ immediately follows.

Suppose now that $\dim \mathcal Z<+\infty$. If $\mathcal Z$ is contained in $\ker T$, the result trivially holds. If $\mathcal Z$ is not contained in $\ker T$, then $B$ is indefinite and $ \mathrm n_-(B)$ is strictly positive (or $+\infty$).
Call $\mathcal V$ the orthogonal complement of $\mathcal Z\cap \ker T$ in $\mathcal Z$ and  recall the spectral decomposition \eqref{decspet} of $\mathcal H$, induced by $B$, $\mathcal H=V^-(T)\oplus V^+(T)\oplus \ker T$.

Given $z\in \mathcal V$, if $P_{V^- (T)} z=0$, then $z\in V^+(T)$ and thus
\[
\ra Tz,z \la \geq 0.
\]

On the other hand $\ra Tz,z \la = 0$ as $z$ belongs to $\mathcal Z$. Then $z=0$ and $\PR{V^- (T)}{\mathcal V}$ is injective. Consequently
\[
\dim \mathcal Z -\dim (\mathcal Z\cap \ker T) = \dim \mathcal V =\dim (\Im\PR{V^- (T)}{\mathcal V})\leq\mathrm n_-(B)
\]
and the proposition is proven.
\end{proof}
\end{section}


\begin{section}{Fredholm and commensurable pairs of closed subspaces}
\label{sec:pairs}
\subsection{Relative dimension and Fredholm pairs}

The following notion of Fredholm pair of closed subspaces of $\mathcal H$ has been introduced by Kato (see \cite{Ka}).

\begin{defin}
Given two closed subspaces $\mathcal V$ and $\mathcal  W$ of $\mathcal H$, we will say that $(\mathcal V,\mathcal W)$
is a {\em Fredholm pair\/} if $\dim(\mathcal V\cap \mathcal W)<+\infty$ and $\Codim(\mathcal V+\mathcal W)<+\infty$.
We will denote by $\FP(\mathcal H)$ the set of all Fredholm pairs
of closed subspaces in $\mathcal H$; for $(\mathcal V,\mathcal W)\in\FP(\mathcal H)$ we set
\[
\ind(\mathcal V,\mathcal W)=\Dim(\mathcal V\cap \mathcal W)-\Codim(\mathcal V+\mathcal W).
\]
\end{defin}

We observe that, by part~\eqref{itm:codimfinita} of Lemma~\ref{thm:fatti},
if $(\mathcal V,\mathcal W)\in\FP(\mathcal H)$ then $\mathcal V+\mathcal W$ is closed, and so
\[
\ind(\mathcal V,\mathcal W)=\Dim(\mathcal V\cap \mathcal W)-\Dim\big((\mathcal V+\mathcal W)^\perp\big)=\Dim(\mathcal V\cap \mathcal W)-\Dim(\mathcal V^\perp\cap \mathcal W^\perp).
\]

Establishing if a given pair of closed subspaces is a Fredholm pair
is not always easy; usually,  the nontrivial part of the proof is
to show that the sum of the spaces is closed. Once this is done,
the finite codimensionality is obtained using orthogonality arguments.
For this reason, it will be essential to determine criteria
of Fredholmness of pairs; most of such criteria are given
in terms of orthogonal projections.

\begin{prop}\label{thm:coppieoperatori} Given two closed subspaces $\mathcal V$ and $\mathcal  W$ of $\mathcal H$,
$(\mathcal V,\mathcal W)\in\FP(\mathcal H)$ if and only if
$\PR{\mathcal V^\perp}{\mathcal W}:\mathcal W\to \mathcal V^\perp$ is a Fredholm operator.
In this case, $\ind(\mathcal V,\mathcal W)$ equals the Fredholm index of $\PR{\mathcal V^\perp}{\mathcal W}$.
\end{prop}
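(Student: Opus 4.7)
The plan is to reduce the statement to a direct comparison between the kernel and cokernel of $\PR{\mathcal V^\perp}{\mathcal W}$ and the two finite-dimensional spaces $\mathcal V\cap\mathcal W$, $\mathcal V^\perp\cap\mathcal W^\perp$ that define the Fredholm index of the pair. The key ingredient already in place is Lemma~\ref{thm:sommachiusa}, which translates closedness of $\mathcal V+\mathcal W$ into closedness of the image of $\PR{\mathcal V^\perp}{\mathcal W}$.

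First I would record the two elementary identifications. On one side, $\ker\PR{\mathcal V^\perp}{\mathcal W}=\{w\in\mathcal W:P_{\mathcal V^\perp}w=0\}=\mathcal W\cap\mathcal V$. On the other side, the adjoint of $\PR{\mathcal V^\perp}{\mathcal W}$ is $\PR{\mathcal W}{\mathcal V^\perp}$ (as recalled right after the definition of $\PR{\cdot}{\cdot}$), whose kernel is $\mathcal V^\perp\cap\mathcal W^\perp$. Since the orthogonal of the closure of the image of any bounded operator equals the kernel of its adjoint, this yields $\overline{\Imm\PR{\mathcal V^\perp}{\mathcal W}}^{\,\perp_{\mathcal V^\perp}}=\mathcal V^\perp\cap\mathcal W^\perp$.

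Next I would prove the forward implication. Assume $(\mathcal V,\mathcal W)\in\FP(\mathcal H)$. By part~\eqref{itm:codimfinita} of Lemma~\ref{thm:fatti}, $\mathcal V+\mathcal W$ is closed, hence Lemma~\ref{thm:sommachiusa} gives that $\Imm\PR{\mathcal V^\perp}{\mathcal W}$ is closed. Combining this with the identifications above, the cokernel of $\PR{\mathcal V^\perp}{\mathcal W}$ inside $\mathcal V^\perp$ is exactly $\mathcal V^\perp\cap\mathcal W^\perp=(\mathcal V+\mathcal W)^\perp$, which is finite dimensional by assumption; the kernel $\mathcal V\cap\mathcal W$ is also finite dimensional. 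Thus $\PR{\mathcal V^\perp}{\mathcal W}$ is Fredholm with
\[
\ind\bigl(\PR{\mathcal V^\perp}{\mathcal W}\bigr)=\dim(\mathcal V\cap\mathcal W)-\dim(\mathcal V^\perp\cap\mathcal W^\perp)=\dim(\mathcal V\cap\mathcal W)-\codim(\mathcal V+\mathcal W)=\ind(\mathcal V,\mathcal W).
\]

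For the converse, assume $\PR{\mathcal V^\perp}{\mathcal W}$ is Fredholm. Its kernel $\mathcal V\cap\mathcal W$ is finite dimensional; its image is closed, so Lemma~\ref{thm:sommachiusa} gives that $\mathcal V+\mathcal W$ is closed, and then $(\mathcal V+\mathcal W)^\perp=\mathcal V^\perp\cap\mathcal W^\perp$ is identified with the cokernel, which is finite dimensional. Hence $(\mathcal V,\mathcal W)\in\FP(\mathcal H)$ with the stated equality of indices. I do not foresee any real obstacle: the only nontrivial point is the closed-range statement, which is already packaged in Lemma~\ref{thm:sommachiusa}, so the proof is essentially bookkeeping with kernels, images, and adjoints of restricted projections.
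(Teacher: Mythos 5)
Your proposal is correct and follows essentially the same route as the paper's proof: both hinge on the identification $\Ker\PR{\mathcal V^\perp}{\mathcal W}=\mathcal V\cap\mathcal W$, on Lemma~\ref{thm:sommachiusa} to translate closedness of $\mathcal V+\mathcal W$ into closed range, and on the adjoint $\PR{\mathcal W}{\mathcal V^\perp}$ to identify the cokernel with $\mathcal V^\perp\cap\mathcal W^\perp=(\mathcal V+\mathcal W)^\perp$. The only cosmetic difference is that the paper computes the image explicitly as $(\mathcal V+\mathcal W)\cap\mathcal V^\perp$ and invokes part~\eqref{itm:codimsub} of Lemma~\ref{thm:fatti}, whereas you read off the cokernel directly from the kernel of the adjoint; this changes nothing of substance.
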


\begin{proof}
In first place,
\begin{equation}\label{eq:coppieoperatori1}
\Ker \PR{\mathcal V^\perp}{\mathcal W}=\mathcal V\cap \mathcal W.
\end{equation}
If $( \mathcal V, \mathcal W)\in\FP(\mathcal H)$, then $\mathcal V+\mathcal W$ is closed, and, by Lemma~\ref{thm:sommachiusa},
$\PR{\mathcal V^\perp}{\mathcal W}$ has closed image.
Moreover,
\begin{multline}
\label{eq:coppieoperatori2}
\Imm\PR{\mathcal V^\perp}{\mathcal W}=
\overline{\Imm\PR{\mathcal V^\perp}{\mathcal W}}=\big(\Ker(\PR{\mathcal V^\perp}{\mathcal W})^*\big)^\perp=(\Ker\PR {\mathcal W}{\mathcal V^\perp})^\perp\\=
(\mathcal V^\perp\cap \mathcal W^\perp)^\perp\cap \mathcal V^\perp=\overline{\mathcal V+\mathcal W}\cap \mathcal V^\perp=(\mathcal V+\mathcal W)\cap \mathcal V^\perp,
\end{multline}
(see also the proof of part \eqref{itm:dimfinita} in Lemma \ref{thm:fatti}). By part~\eqref{itm:codimsub} of Lemma~\ref{thm:fatti},
\begin{equation}\label{eq:coppieoperatori3}
\Codim_{\mathcal V^\perp}((\mathcal V+\mathcal W)\cap \mathcal V^\perp)=\Codim(\mathcal V+\mathcal W)<+\infty,
\end{equation}
hence
$\PR{\mathcal V^\perp}{\mathcal W}$ is Fredholm.
Conversely, if $\PR{\mathcal V^\perp}{\mathcal W}$ is Fredholm, then, by Lemma~\ref{thm:sommachiusa}, $\mathcal V+\mathcal W$ is closed;
moreover, the adjoint $(\PR{\mathcal V^\perp}{\mathcal W})^*=\PR {\mathcal W}{\mathcal V^\perp}$ is also Fredholm, and thus
\[
\Codim(\mathcal V+\mathcal W)=\Dim\big((\mathcal V+\mathcal W)^\perp\big)=\Dim(\mathcal V^\perp\cap \mathcal W^\perp)=
\Dim(\Ker\PR {\mathcal W}{\mathcal V^\perp})<+\infty.
\]

The last statement in the thesis follows readily from \eqref{eq:coppieoperatori1}, \eqref{eq:coppieoperatori2}
and \eqref{eq:coppieoperatori3}.
\end{proof}

\begin{cor}\label{thm:propFP}
If $(\mathcal V,\mathcal W)\in\FP(\mathcal H)$, then $(\mathcal W,\mathcal V)$ and $(\mathcal V^\perp,\mathcal W^\perp)$ are in
$\FP(\mathcal H)$, and $\ind(\mathcal V,\mathcal W)=\ind(\mathcal W,\mathcal V)=-\ind(\mathcal V^\perp,\mathcal W^\perp)$.
\end{cor}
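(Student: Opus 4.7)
My plan is to first dispatch the symmetric part and then reduce the orthogonal-complement part to Proposition~\ref{thm:coppieoperatori} via Hilbert adjoints.

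The first assertion, $(\mathcal W,\mathcal V)\in\FP(\mathcal H)$ with $\ind(\mathcal V,\mathcal W)=\ind(\mathcal W,\mathcal V)$, is immediate because the definition of $\FP(\mathcal H)$ is manifestly symmetric in its two arguments: both $\mathcal V\cap\mathcal W$ and $\mathcal V+\mathcal W$ are unchanged when the pair is reversed, so the finite-dimensionality/finite-codimensionality conditions and the index itself are reversal-invariant.

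For the orthogonal-complement part, I would invoke Proposition~\ref{thm:coppieoperatori} twice, bridging the two applications with the behavior of Fredholm operators under Hilbert adjoints. From the hypothesis $(\mathcal V,\mathcal W)\in\FP(\mathcal H)$, Proposition~\ref{thm:coppieoperatori} gives that $\PR{\mathcal V^\perp}{\mathcal W}:\mathcal W\to\mathcal V^\perp$ is Fredholm with index $\ind(\mathcal V,\mathcal W)$. Its Hilbert adjoint, which the paper has already identified as $\PR{\mathcal W}{\mathcal V^\perp}:\mathcal V^\perp\to\mathcal W$, is then Fredholm with index $-\ind(\mathcal V,\mathcal W)$ by the standard adjoint theory of bounded Fredholm operators. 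Applying Proposition~\ref{thm:coppieoperatori} now to the pair $(\mathcal W^\perp,\mathcal V^\perp)$, and using $(\mathcal W^\perp)^\perp=\mathcal W$, I find that this pair lies in $\FP(\mathcal H)$ iff $\PR{\mathcal W}{\mathcal V^\perp}$ is Fredholm, which we have just established; moreover $\ind(\mathcal W^\perp,\mathcal V^\perp)=\ind(\PR{\mathcal W}{\mathcal V^\perp})=-\ind(\mathcal V,\mathcal W)$. Invoking the symmetric part already proved, this transfers to $(\mathcal V^\perp,\mathcal W^\perp)\in\FP(\mathcal H)$ with $\ind(\mathcal V^\perp,\mathcal W^\perp)=\ind(\mathcal W^\perp,\mathcal V^\perp)=-\ind(\mathcal V,\mathcal W)$.

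The only subtlety is bookkeeping: one must correctly identify which projection characterizes which pair under Proposition~\ref{thm:coppieoperatori}, and in particular recognize that the projection associated with $(\mathcal W^\perp,\mathcal V^\perp)$ is precisely the Hilbert adjoint of the one associated with $(\mathcal V,\mathcal W)$. Once this identification is made, there is no genuine obstacle: all three nontrivial facts (closedness of $\mathcal V^\perp+\mathcal W^\perp$, finiteness of $\dim(\mathcal V^\perp\cap\mathcal W^\perp)$ and $\codim(\mathcal V^\perp+\mathcal W^\perp)$, and the index identity) are packaged simultaneously in the Fredholmness and index of a single adjoint operator.
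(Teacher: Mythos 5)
Your proposal is correct and follows essentially the same route as the paper: the symmetric part from the definition, and the orthogonal-complement part by combining Proposition~\ref{thm:coppieoperatori} with the fact that the adjoint $\PR{\mathcal W}{\mathcal V^\perp}$ of $\PR{\mathcal V^\perp}{\mathcal W}$ is Fredholm of opposite index. Your bookkeeping (passing through the pair $(\mathcal W^\perp,\mathcal V^\perp)$ and then invoking symmetry) is just a slightly more explicit spelling-out of the paper's one-line argument.
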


\begin{proof}
The fact that $(\mathcal W,\mathcal V)\in\FP(\mathcal H)$ follows directly from the definition
of Fredholm pairs, as well as the equality $\ind(\mathcal V,\mathcal W)=\ind(\mathcal W,\mathcal V)$.
Moreover, since the adjoint of $\PR{\mathcal V^\perp}{\mathcal W}$ is $\PR {\mathcal W}{\mathcal V^\perp}$, it
follows from Proposition~\ref{thm:coppieoperatori} that $(\mathcal V^\perp,\mathcal W^\perp)\in\FP(\mathcal H)$,
and that $\ind(\mathcal V^\perp,\mathcal W^\perp)=\ind(\PR {\mathcal W}{\mathcal V^\perp})=-\ind(\PR{\mathcal V^\perp}{\mathcal W})=
-\ind(\mathcal V,\mathcal W)$.
\end{proof}

Here is yet another characterization of Fredholm pairs.

\begin{cor}\label{thm>diffprojFred}
$(\mathcal V,\mathcal W)\in\FP(\mathcal H)$ if and only if the difference $P_\mathcal V-P_\mathcal W:\mathcal H\to\mathcal H$
is Fredholm.
\end{cor}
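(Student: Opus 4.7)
The plan is to write $P_{\mathcal V}-P_{\mathcal W}$ as an off-diagonal operator matrix whose blocks are precisely the projections analysed in Proposition~\ref{thm:coppieoperatori}, and then deduce Fredholmness of the difference from Fredholmness of the pair $(\mathcal V,\mathcal W)$ by invoking that proposition together with Corollary~\ref{thm:propFP}.

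More concretely, I would decompose the source as $\mathcal H=\mathcal W\oplus\mathcal W^\perp$ and the target as $\mathcal H=\mathcal V\oplus\mathcal V^\perp$, and observe that for $w\in\mathcal W$ one has $(P_{\mathcal V}-P_{\mathcal W})w = P_{\mathcal V}w-w=-\PR{\mathcal V^\perp}{\mathcal W}w\in\mathcal V^\perp$, while for $w'\in\mathcal W^\perp$ one has $(P_{\mathcal V}-P_{\mathcal W})w' = P_{\mathcal V}w'=\PR{\mathcal V}{\mathcal W^\perp}w'\in\mathcal V$. Hence, in these decompositions,
\[
P_{\mathcal V}-P_{\mathcal W}=\begin{pmatrix} 0 & \PR{\mathcal V}{\mathcal W^\perp} \\ -\PR{\mathcal V^\perp}{\mathcal W} & 0 \end{pmatrix}.
\]
Since the kernel, image and cokernel of such an off-diagonal block operator split as direct sums of those of its individual blocks, $P_{\mathcal V}-P_{\mathcal W}$ is Fredholm if and only if both $\PR{\mathcal V^\perp}{\mathcal W}$ and $\PR{\mathcal V}{\mathcal W^\perp}$ are. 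Proposition~\ref{thm:coppieoperatori} identifies Fredholmness of the first with $(\mathcal V,\mathcal W)\in\FP(\mathcal H)$ and, applied to the pair $(\mathcal V^\perp,\mathcal W^\perp)$, Fredholmness of the second with $(\mathcal V^\perp,\mathcal W^\perp)\in\FP(\mathcal H)$; by Corollary~\ref{thm:propFP} these two conditions are equivalent, which completes the proof.

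The only delicate point is recognizing the off-diagonal structure; once this is in hand, Proposition~\ref{thm:coppieoperatori} and Corollary~\ref{thm:propFP} do the rest. As a byproduct one reads off $\ind(P_{\mathcal V}-P_{\mathcal W})=\ind(\PR{\mathcal V}{\mathcal W^\perp})+\ind(\PR{\mathcal V^\perp}{\mathcal W})=0$, consistent with the self-adjointness of the difference.
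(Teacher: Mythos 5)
Your proof is correct and is essentially the paper's argument: the paper factors $P_{\mathcal W}-P_{\mathcal V}$ through the isomorphisms $\mathcal H\cong\mathcal W\oplus\mathcal W^\perp$ and $\mathcal V^\perp\oplus\mathcal V\cong\mathcal H$ with middle operator $\PR{\mathcal V^\perp}{\mathcal W}\oplus(-\PR{\mathcal V}{\mathcal W^\perp})$, which is exactly your off-diagonal block matrix written out, and then concludes via Proposition~\ref{thm:coppieoperatori} and Corollary~\ref{thm:propFP} just as you do.
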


\begin{proof}
Consider the operators
\[
\begin{array}{ll}
\wt:\mathcal H\to \mathcal W\oplus \mathcal W^\perp, \quad & \wt(x)=(P_\mathcal Wx, P_{\mathcal W^\perp}x),\smallskip\\
T_2:\mathcal W\oplus \mathcal W^\perp\to \mathcal V^\perp\oplus \mathcal V, \quad & T_2(w,w_\perp)=(P_{\mathcal V^\perp}w,-P_\mathcal Vw_\perp),\smallskip\\
T_3:\mathcal V^\perp\oplus \mathcal V\to \mathcal H,\quad & T_3(v,v_\perp)=v+v_\perp.
\end{array}
\]

Clearly,
$\wt$ and $T_3$ are isomorphisms, and the composition $T_3\circ T_2\circ \wt:\mathcal H\to\mathcal H$
is Fredholm if and only if $T_2$ is Fredholm. We have
\begin{multline*}
T_3(T_2(\wt z))=T_3(T_2(P_\mathcal Wz,P_{\mathcal W^\perp}z))=P_{\mathcal V^\perp}(P_\mathcal Wz)-
P_\mathcal V(P_{\mathcal W^\perp}z)\\=P_{\mathcal V^\perp}(P_\mathcal Wz)+P_\mathcal V(P_\mathcal Wz)-P_\mathcal V(P_\mathcal Wz)-
P_\mathcal V(P_{\mathcal W^\perp}z)=P_\mathcal Wz-P_\mathcal Vz.
\end{multline*}
Now, $T_2=\PR{\mathcal V^\perp}{\mathcal W}\oplus(-\PR{\mathcal V}{\mathcal W^\perp})$,
and this is Fredholm if
and only if both $\PR{\mathcal V^\perp}{\mathcal W}$ and $\PR{\mathcal V}{\mathcal W^\perp}$ are; the conclusion
follows now from Proposition~\ref{thm:coppieoperatori} and Corollary~\ref{thm:propFP}.
\end{proof}

As to the \emph{sum} of orthogonal projections onto Fredholm pairs,
we have the following result.

\begin{lem}\label{thm:sommasuriettiva}
Let $\mathcal V,\mathcal W$ be closed subspaces of $\mathcal H$ such that $\mathcal V\cap \mathcal W=\{0\}$ and such
that $\mathcal V+\mathcal W$ is closed.
Then, the image of  $P_\mathcal V+P_\mathcal W:\mathcal H\to\mathcal H$ is $\mathcal V+\mathcal W$. In particular, if
$\mathcal V+\mathcal W=\mathcal H$, then $P_\mathcal V+P_\mathcal W$ is surjective.
\end{lem}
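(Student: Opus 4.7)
The plan is to show that, beyond the obvious inclusion $\Imm(P_\mathcal V+P_\mathcal W)\subseteq \mathcal V+\mathcal W$, the image is in fact all of $\mathcal V+\mathcal W$. By part~\eqref{itm:nucleosomma} of Lemma~\ref{thm:fatti} together with the hypothesis that $\mathcal V+\mathcal W$ be closed, one already has $\overline{\Imm(P_\mathcal V+P_\mathcal W)}=\overline{\mathcal V+\mathcal W}=\mathcal V+\mathcal W$, so the only missing ingredient is that the image be closed.

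The key step will be to factor $P_\mathcal V+P_\mathcal W$ as $TT^*$ for a cleverly chosen bounded operator $T$ between Hilbert spaces, and then to invoke the closed range theorem. Explicitly, let $\mathcal V\oplus\mathcal W$ denote the external Hilbert space direct sum and set
\[
T:\mathcal V\oplus\mathcal W\longrightarrow \mathcal H,\qquad T(v,w)=v+w.
\]
A one-line inner product computation yields $T^{*}u=(P_\mathcal V u,P_\mathcal W u)$, whence $TT^{*}=P_\mathcal V+P_\mathcal W$.

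The two hypotheses translate immediately into properties of $T$: the closedness of $\mathcal V+\mathcal W$ is exactly the statement that $\Imm T$ is closed, while $\mathcal V\cap\mathcal W=\{0\}$ forces $\Ker T=\{(v,-v):v\in\mathcal V\cap\mathcal W\}=\{0\}$. The closed range theorem then gives $\Imm T^{*}=(\Ker T)^{\perp}=\mathcal V\oplus\mathcal W$, and hence
\[
\Imm(P_\mathcal V+P_\mathcal W)=\Imm(TT^{*})=T(\Imm T^{*})=T(\mathcal V\oplus\mathcal W)=\Imm T=\mathcal V+\mathcal W,
\]
which is exactly the claim. The ``in particular'' clause follows as a tautological consequence by specializing to $\mathcal V+\mathcal W=\mathcal H$.

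The only substantive obstacle is the conceptual one of noticing the factorization $P_\mathcal V+P_\mathcal W=TT^{*}$; once this is in view, the argument becomes automatic and sidesteps any need for Friedrichs-angle or $\gamma(\mathcal V,\mathcal W)$-type estimates.
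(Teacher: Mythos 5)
Your proof is correct, and it takes a genuinely different route from the paper's. The paper first reduces to the case $\mathcal V+\mathcal W=\mathcal H$, then represents $\mathcal W$ as the graph of a bounded operator $A:\mathcal V^\perp\to\mathcal V$ (via the Closed Graph Theorem), identifies $\mathcal W^\perp=\Gr(-A^*)$, and finally exhibits, for each $r\in\mathcal H$, an explicit $s$ with $P_\mathcal Vs+P_\mathcal Ws=r$; it is constructive and uses only elementary orthogonality computations. You instead observe the factorization $P_\mathcal V+P_\mathcal W=TT^*$, where $T:\mathcal V\oplus\mathcal W\to\mathcal H$ is the sum map (the same operator the paper already uses in the proof of part~\eqref{itm:codimfinita} of Lemma~\ref{thm:fatti}), and then invoke the closed range theorem: $\Imm T=\mathcal V+\mathcal W$ closed forces $\Imm T^*=(\Ker T)^\perp$, and $\Ker T=\{0\}$ by the trivial-intersection hypothesis, so $\Imm(TT^*)=T(\Imm T^*)=\Imm T$. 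This is shorter, skips both the reduction step and the explicit preimage construction, at the price of appealing to the closed range theorem rather than the bare-hands tools the paper confines itself to. A side benefit of your argument worth noting: the hypothesis $\mathcal V\cap\mathcal W=\{0\}$ is not actually needed, since in general $T\bigl((\Ker T)^\perp\bigr)=\Imm T$, so $\Imm(P_\mathcal V+P_\mathcal W)=\mathcal V+\mathcal W$ whenever the sum is closed; this slightly strengthens the lemma and is consistent with how it is later complemented by Lemma~\ref{thm:imagesum} for general Fredholm pairs.
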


\begin{proof}
Obviously, $\mathrm{Im}(P_\mathcal V+P_\mathcal W)\subseteq \mathcal V+\mathcal W$.
Since $\mathcal V+\mathcal W$ is closed and $P_\mathcal VP_{\mathcal V+\mathcal W}=P_\mathcal V$, $P_\mathcal WP_{\mathcal V+\mathcal W}=P_\mathcal W$,
we can replace $\mathcal H$ by $\mathcal V+\mathcal W$ and assume that
$\mathcal V+\mathcal W=\mathcal H$.

Since $\mathcal V\cap \mathcal W=\{0\}$, then there exists a (unique) linear operator $A:\mathcal V^\perp\to \mathcal V$
whose graph
\[
\Gr(A)=\big\{z+Az:z\in \mathcal V^\perp\big\}\subseteq \mathcal H
\]
is $\mathcal W$.
Then, $\mathcal H=\mathcal V+\Gr(A)$.

Clearly, $A$ is bounded because its graph is closed
(Closed Graph Theorem). It is easy to show that the graph of the negative
adjoint map $-A^*:\mathcal V\to \mathcal V^\perp$ is equal to $\mathcal W^\perp$; namely, if  $y\in \mathcal W$, then $y=z+Az$ for some $z\in \mathcal V^\perp$. Now, if $x\in \mathcal V$, we have
\[
\langle x-A^*x,y\rangle=\langle x-A^*x,z+Az\rangle=-\langle A^*x,z\rangle+\langle x,Az\rangle=0,
\]
i.e., $\Gr(-A^*)\subseteq \mathcal W^\perp$. On the other hand, choose $t\in \mathcal W^\perp$
and write $t=t_\mathcal V+t_{\mathcal V^\perp}$, where $t_\mathcal V\in \mathcal V$ and $t_{\mathcal V^\perp}\in \mathcal V^\perp$.
Since $\mathcal W=\Gr(A)$, we have:
\[
\langle t,z+Az\rangle=0,\quad\forall z\in \mathcal V^\perp,
\]
i.e.,
\[
0=\langle t_\mathcal V+t_{\mathcal V^\perp},z+Az\rangle=\langle t_\mathcal V,Az\rangle+\langle t_{\mathcal V^\perp},z\rangle=
\langle A^*t_\mathcal V+t_{\mathcal V^\perp},z\rangle
\]
for all $z\in \mathcal V^\perp$, which implies $A^*t_\mathcal V+t_{\mathcal V^\perp}\in \mathcal V$.
But $A^*t_\mathcal V+t_{\mathcal V^\perp}\in \mathcal V^\perp$, hence $A^*t_\mathcal V+t_{\mathcal V^\perp}=0$, and
$t_{\mathcal V^\perp}=-A^*t_\mathcal V$, $t=t_\mathcal V-A^*t_\mathcal V\in\Gr(-A^*)$, that is
$\Gr(-A^*)\supseteq \mathcal W^\perp$. That is, $\Gr(-A^*)= \mathcal W^\perp$.

Let us now determine the image of $P_\mathcal V+P_\mathcal W$;
let $r\in \mathcal H$ be fixed,
we search $s\in\mathcal H$ with $P_\mathcal Vs+P_\mathcal Ws=r$. Write $r=z+Az+t$, with $z\in \mathcal V^\perp$ and
$t\in \mathcal V$, and  set
\[
s=(z+Az)+(c-A^*c),
\]
where $c=t-Az\in \mathcal V$.
Observe that $z+Az\in \mathcal W$ and $c-A^*c\in \mathcal W^\perp$, i.e., $P_\mathcal Ws=z+Az$. Writing $s=(Az+c)+(z-A^*c)$, we have
 $Az+c\in \mathcal V$ and $z-A^*c\in \mathcal V^\perp$. Hence $P_\mathcal Vs=Az+c=t$.
In conclusion, $P_\mathcal Vs+P_\mathcal Ws=z+Az+t=r$ and the proof is concluded.
\end{proof}

As to the image of $P_\mathcal V+P_\mathcal W$ for a general Fredholm pair $(\mathcal V,\mathcal W)$, we have the following lemma.

\begin{lem}\label{thm:imagesum}
Given a Fredholm pair $(\mathcal V,\mathcal W)$, the image of $P_\mathcal V+P_\mathcal W$ has finite codimension in $\mathcal H$.
\end{lem}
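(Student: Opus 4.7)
The plan is to quotient out the finite dimensional intersection and apply Lemma~\ref{thm:sommasuriettiva}. Setting $\mathcal Z:=\mathcal V\cap\mathcal W$, $\mathcal V':=\mathcal V\cap\mathcal Z^\perp$ and $\mathcal W':=\mathcal W\cap\mathcal Z^\perp$, the orthogonal decompositions $\mathcal V=\mathcal Z\oplus\mathcal V'$ and $\mathcal W=\mathcal Z\oplus\mathcal W'$ give the identity
\[
P_\mathcal V+P_\mathcal W=2P_\mathcal Z+(P_{\mathcal V'}+P_{\mathcal W'}),
\]
in which the first summand has image $\mathcal Z$, while the second takes values in $\mathcal Z^\perp$ and vanishes on $\mathcal Z$ (since $\mathcal V',\mathcal W'\subseteq\mathcal Z^\perp$).

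I would then check that the hypotheses of Lemma~\ref{thm:sommasuriettiva} hold for $(\mathcal V',\mathcal W')$. The inclusion $\mathcal V'\cap\mathcal W'\subseteq(\mathcal V\cap\mathcal W)\cap\mathcal Z^\perp=\mathcal Z\cap\mathcal Z^\perp=\{0\}$ is immediate. For closedness of $\mathcal V'+\mathcal W'$, I would identify it with $(\mathcal V+\mathcal W)\cap\mathcal Z^\perp$: the inclusion $\subseteq$ is obvious, and given $z\in(\mathcal V+\mathcal W)\cap\mathcal Z^\perp$ written $z=v+w$ with $v\in\mathcal V$, $w\in\mathcal W$, the vectors $v-P_\mathcal Z v\in\mathcal V'$ and $w-P_\mathcal Z w\in\mathcal W'$ have sum equal to $z$, since $P_\mathcal Z z=0$ forces $P_\mathcal Z v+P_\mathcal Z w=0$. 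Because $(\mathcal V,\mathcal W)\in\FP(\mathcal H)$, $\mathcal V+\mathcal W$ is closed by Lemma~\ref{thm:fatti}\eqref{itm:codimfinita}, and so is its intersection with $\mathcal Z^\perp$.

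Lemma~\ref{thm:sommasuriettiva} then gives $\Imm(P_{\mathcal V'}+P_{\mathcal W'})=\mathcal V'+\mathcal W'$. Given any $a\in\mathcal Z$ and $b\in\mathcal V'+\mathcal W'$, choosing $y\in\mathcal Z^\perp$ with $(P_{\mathcal V'}+P_{\mathcal W'})y=b$ and setting $x=\tfrac12 a+y$, one has $(P_\mathcal V+P_\mathcal W)x=a+b$. Therefore $\Imm(P_\mathcal V+P_\mathcal W)=\mathcal Z\oplus(\mathcal V'+\mathcal W')=\mathcal V+\mathcal W$, a subspace of finite codimension in $\mathcal H$ by the Fredholm pair hypothesis.

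The only delicate step is the identification $\mathcal V'+\mathcal W'=(\mathcal V+\mathcal W)\cap\mathcal Z^\perp$, which makes closedness of $\mathcal V'+\mathcal W'$ manifest and enables the use of Lemma~\ref{thm:sommasuriettiva}; the remainder is straightforward orthogonal projection calculus.
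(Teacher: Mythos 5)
Your argument is correct, but it follows a genuinely different route from the paper's. The paper deduces the lemma from Proposition~\ref{thm:coppieoperatori}: since $(\mathcal V,\mathcal W)$ is a Fredholm pair, the operators $\PR{\mathcal V^\perp}{\mathcal W}$ and $\PR{\mathcal W^\perp}{\mathcal V}$ are Fredholm, hence so are their adjoints $\PR{\mathcal W}{\mathcal V^\perp}$ and $\PR{\mathcal V}{\mathcal W^\perp}$; consequently $P_\mathcal V(\mathcal W^\perp)$ and $P_\mathcal W(\mathcal V^\perp)$ have finite codimension in $\mathcal V$ and $\mathcal W$ respectively, so $\Imm(P_\mathcal V+P_\mathcal W)\supseteq(P_\mathcal V+P_\mathcal W)(\mathcal V^\perp+\mathcal W^\perp)=P_\mathcal V(\mathcal W^\perp)+P_\mathcal W(\mathcal V^\perp)$ has finite codimension in $\mathcal V+\mathcal W$, and hence in $\mathcal H$. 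You instead split off the finite-dimensional intersection $\mathcal Z=\mathcal V\cap\mathcal W$ and reduce to the zero-intersection case already covered by Lemma~\ref{thm:sommasuriettiva}; the key identification $\mathcal V'+\mathcal W'=(\mathcal V+\mathcal W)\cap\mathcal Z^\perp$ is verified correctly, and the surjectivity computation is sound (the only implicit point, that a preimage $y$ of $b$ may be taken in $\mathcal Z^\perp$, is justified by your own observation that $P_{\mathcal V'}+P_{\mathcal W'}$ annihilates $\mathcal Z$, so one may replace $y$ by $y-P_\mathcal Z y$). What each approach buys: the paper's proof is shorter given that Proposition~\ref{thm:coppieoperatori} is already in place, but it only locates a finite-codimensional subspace inside the image; your proof is more elementary (pure orthogonal-projection calculus) and yields the sharper conclusion $\Imm(P_\mathcal V+P_\mathcal W)=\mathcal V+\mathcal W$ — in fact it uses only the closedness of $\mathcal V+\mathcal W$, not $\Dim(\mathcal V\cap\mathcal W)<+\infty$, so it is really an extension of Lemma~\ref{thm:sommasuriettiva} to pairs with nonzero intersection, anticipating in a stronger form what Proposition~\ref{thm:sommafredcoppiafred} later records.
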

\begin{proof}
By Proposition~\ref{thm:coppieoperatori}, $\PR{ \mathcal V^\perp}{\mathcal W}:\mathcal W\to \mathcal V^\perp$ and $\PR{ \mathcal W^\perp}{\mathcal V}  :\mathcal V\to \mathcal W^\perp$
are Fredholm, and thus their adjoints $\PR{ \mathcal W}{\mathcal V^\perp}   :\mathcal V^\perp\to \mathcal W$ and $\PR{ \mathcal V}{\mathcal W^\perp}   :\mathcal W^\perp\to \mathcal V$
are Fredholm. It follows that $\mathcal W'=P_\mathcal W(\mathcal V^\perp)$ has finite codimension in $\mathcal W$, and that
$\mathcal V'=P_\mathcal V(\mathcal W^\perp)$ has finite codimension in $\mathcal V$. But $(P_\mathcal V+P_\mathcal W)(\mathcal W^\perp+\mathcal V^\perp)=
P_\mathcal V(\mathcal W^\perp)+P_\mathcal W(\mathcal V^\perp)=\mathcal V'+\mathcal W'$, hence the image of $P_\mathcal V+P_\mathcal W$ has finite codimension in $\mathcal V+\mathcal W$. Since
$\mathcal V+\mathcal W$ has finite codimension in $\mathcal H$, it follows that $P_\mathcal V+P_\mathcal W$ has image of finite codimension in
$\mathcal H$.
\end{proof}

We can now extend the result of Lemma~\ref{thm:sommasuriettiva}
to pairs of closed subspaces $\mathcal V$ and $\mathcal W$ whose intersection is not
zero.

\begin{prop}\label{thm:sommafredcoppiafred}
Let $\mathcal V,\mathcal W\subseteq\mathcal H$ be closed subspaces with $\mathrm{dim}(\mathcal V\cap \mathcal W)<+\infty$.
Then, $P_\mathcal V+P_\mathcal W$ is Fredholm if and only if $(\mathcal V,\mathcal W)$ is a Fredholm pair.
\end{prop}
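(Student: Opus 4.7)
The plan is to prove the equivalence by directly computing kernel and image of $P_\mathcal V+P_\mathcal W$ and matching them against the Fredholm pair conditions, exploiting tools already available from Lemma~\ref{thm:fatti} and Lemma~\ref{thm:imagesum}.

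First I would record two general facts that will handle both implications. By part \eqref{itm:nucleosomma} of Lemma~\ref{thm:fatti},
\[
\Ker(P_\mathcal V+P_\mathcal W)=\mathcal V^\perp\cap \mathcal W^\perp=(\overline{\mathcal V+\mathcal W})^\perp,
\qquad
\overline{\Imm(P_\mathcal V+P_\mathcal W)}=\overline{\mathcal V+\mathcal W}.
\]
In addition, the inclusion $\Imm(P_\mathcal V+P_\mathcal W)\subseteq\mathcal V+\mathcal W$ is immediate from the fact that $P_\mathcal V x\in\mathcal V$ and $P_\mathcal W x\in\mathcal W$ for every $x\in\mathcal H$.

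For the implication $(\Leftarrow)$, assume $(\mathcal V,\mathcal W)\in\FP(\mathcal H)$. Then $\mathcal V+\mathcal W$ is closed with finite codimension, so $\Dim\bigl(\mathcal V^\perp\cap\mathcal W^\perp\bigr)=\Codim(\mathcal V+\mathcal W)<+\infty$; by the first displayed identity, $\ker(P_\mathcal V+P_\mathcal W)$ is finite dimensional. Lemma~\ref{thm:imagesum} then provides that $\Imm(P_\mathcal V+P_\mathcal W)$ has finite codimension in $\mathcal H$, which (by the standard fact recalled in the proof of Lemma~\ref{thm:fatti}\eqref{itm:codimfinita}) forces its image to be closed. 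Hence $P_\mathcal V+P_\mathcal W$ is Fredholm.

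For the converse $(\Rightarrow)$, assume $P_\mathcal V+P_\mathcal W$ is Fredholm, so its image is closed and has finite codimension in $\mathcal H$. Chaining the inclusion above with the identity $\overline{\Imm(P_\mathcal V+P_\mathcal W)}=\overline{\mathcal V+\mathcal W}$ yields
\[
\Imm(P_\mathcal V+P_\mathcal W)\subseteq\mathcal V+\mathcal W\subseteq\overline{\mathcal V+\mathcal W}=\overline{\Imm(P_\mathcal V+P_\mathcal W)}=\Imm(P_\mathcal V+P_\mathcal W),
\]
so all three sets coincide, which shows that $\mathcal V+\mathcal W$ is closed of finite codimension. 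Combined with the standing assumption $\Dim(\mathcal V\cap\mathcal W)<+\infty$, this gives $(\mathcal V,\mathcal W)\in\FP(\mathcal H)$, completing the proof.

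There is no real obstacle: the argument is essentially a two-step diagram chase between $\ker/\Imm$ of $P_\mathcal V+P_\mathcal W$ and the pair $(\mathcal V\cap\mathcal W,\,\mathcal V+\mathcal W)$. The only conceptual point worth flagging is that the hypothesis $\Dim(\mathcal V\cap\mathcal W)<+\infty$ cannot be dropped: the Fredholmness of $P_\mathcal V+P_\mathcal W$ by itself sees only $\mathcal V^\perp\cap\mathcal W^\perp$ and $\mathcal V+\mathcal W$, as the trivial case $\mathcal V=\mathcal W=\mathcal H$ (where $P_\mathcal V+P_\mathcal W=2\Id$ is Fredholm but $\mathcal V\cap\mathcal W$ is infinite dimensional) makes clear.
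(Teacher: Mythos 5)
Your proof is correct and follows essentially the same route as the paper: both directions rest on Lemma~\ref{thm:fatti}\eqref{itm:nucleosomma} for the kernel and on Lemma~\ref{thm:imagesum} for the finite codimension of the image. The only cosmetic difference is in the direction ``$P_\mathcal V+P_\mathcal W$ Fredholm $\Rightarrow$ Fredholm pair'', where you identify $\mathcal V+\mathcal W$ with $\Imm(P_\mathcal V+P_\mathcal W)$ by a sandwich argument, while the paper simply observes that $\mathcal V+\mathcal W$ contains that closed finite-codimensional image; your remark that the hypothesis $\Dim(\mathcal V\cap\mathcal W)<+\infty$ cannot be dropped is a nice addition.
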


\begin{proof}
If $P_\mathcal V+P_\mathcal W$ is Fredholm, then $\mathcal V+\mathcal W$ is a closed and finite codimensional subspace of $\mathcal H$
because it contains the image of $P_\mathcal V+P_\mathcal W$.
Conversely, if $(\mathcal V,\mathcal W)$ is a Fredholm pair, by part~\eqref{itm:nucleosomma} of Lemma~\ref{thm:fatti}
one has $\Ker(P_\mathcal V+P_\mathcal W)=\mathcal V^\perp\cap \mathcal W^\perp=(\mathcal V+\mathcal W)^\perp$, hence $\mathrm{dim}\big(\Ker(P_\mathcal V+P_\mathcal W)\big)<+\infty$.
By Lemma~\ref{thm:imagesum}, the image of $P_\mathcal V+P_\mathcal W$ has finite codimension in $\mathcal H$, which concludes the proof.
\end{proof}

Set:
\[\Ecal(\mathcal H)=\Big\{(\mathcal V,\mathcal W):(\mathcal V,\mathcal W^\perp)\in\FP(\mathcal H)\Big\}.
\]
It follows immediately from Proposition~\ref{thm:coppieoperatori}
that $(\mathcal V,\mathcal W)\in\Ecal(\mathcal H)$ if and only if $\PR{\mathcal V}{^\mathcal W}$ is Fredholm.

\begin{cor}\label{thm:equivrel}
$\Ecal(\mathcal H)$ is an equivalence relation in the set of all closed
subspaces of $\mathcal H$. If $(\mathcal V,\mathcal W),(\mathcal W,\mathcal Z)\in\Ecal(\mathcal H)$, then $\ind(\mathcal V,\mathcal Z^\perp)=\ind(\mathcal V,\mathcal W^\perp)+\ind(\mathcal W,\mathcal Z^\perp)$.
\end{cor}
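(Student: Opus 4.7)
The plan is to verify the three axioms of an equivalence relation using the characterization recalled just before the corollary: $(\mathcal V,\mathcal W)\in\Ecal(\mathcal H)$ if and only if the restricted orthogonal projection $\PR{\mathcal V}{\mathcal W}\colon\mathcal W\to\mathcal V$ is Fredholm. Reflexivity is immediate since $\PR{\mathcal V}{\mathcal V}=\mathrm{id}_{\mathcal V}$ is trivially Fredholm. Symmetry follows from the adjoint identity $(\PR{\mathcal V}{\mathcal W})^*=\PR{\mathcal W}{\mathcal V}$ (recalled at the beginning of Section~\ref{sec:preliminaries}) together with the fact that the adjoint of a Fredholm operator is Fredholm.

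For transitivity and the index formula, assume $(\mathcal V,\mathcal W),(\mathcal W,\mathcal Z)\in\Ecal(\mathcal H)$. The composition $\PR{\mathcal V}{\mathcal W}\circ\PR{\mathcal W}{\mathcal Z}\colon\mathcal Z\to\mathcal V$ is then Fredholm, with Fredholm index $\ind\PR{\mathcal V}{\mathcal W}+\ind\PR{\mathcal W}{\mathcal Z}$ by multiplicativity. The natural identity $P_{\mathcal V}z=P_{\mathcal V}P_{\mathcal W}z+P_{\mathcal V}P_{\mathcal W^\perp}z$ valid for $z\in\mathcal Z$ gives the decomposition
\[
\PR{\mathcal V}{\mathcal Z}=\PR{\mathcal V}{\mathcal W}\circ\PR{\mathcal W}{\mathcal Z}+P_{\mathcal V}P_{\mathcal W^\perp}\big|_{\mathcal Z},
\]
relating $\PR{\mathcal V}{\mathcal Z}$ to the Fredholm composition.

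I expect the main obstacle to be that the correction term $P_{\mathcal V}P_{\mathcal W^\perp}|_{\mathcal Z}$ is not compact in general (elementary $\ell^2$ examples where $\mathcal V$, $\mathcal W$, $\mathcal Z$ are in generic position show this), so Fredholmness of $\PR{\mathcal V}{\mathcal Z}$ cannot be inferred from a direct compact perturbation argument. My approach would be to verify Fredholmness of $\PR{\mathcal V}{\mathcal Z}$ by direct inspection of its kernel and cokernel: the kernel equals $\mathcal V^{\perp}\cap\mathcal Z$, and the cokernel, via the adjoint $\PR{\mathcal Z}{\mathcal V}$, is identified with $\mathcal V\cap\mathcal Z^{\perp}$. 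I would show both are finite dimensional by factoring the projection $P_\mathcal V$ through the decomposition $\mathcal H=\mathcal W\oplus\mathcal W^{\perp}$, exploiting the relation $P_{\mathcal V}P_{\mathcal W}x=-P_{\mathcal V}P_{\mathcal W^{\perp}}x$ valid for $x\in\mathcal V^{\perp}\cap\mathcal Z$, and using the four finite-dimensional intersections $\mathcal V^{\perp}\cap\mathcal W$, $\mathcal V\cap\mathcal W^{\perp}$, $\mathcal W^{\perp}\cap\mathcal Z$, $\mathcal W\cap\mathcal Z^{\perp}$ supplied by the hypothesis. Closedness of the image of $\PR{\mathcal V}{\mathcal Z}$ will then be automatic from the finite dimensionality of kernel and cokernel.

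Finally, the index formula follows by combining the equality $\ind\PR{\mathcal V}{\mathcal Z}=\ind\PR{\mathcal V}{\mathcal W}+\ind\PR{\mathcal W}{\mathcal Z}$ established above with the identification $\ind\PR{\mathcal V}{\mathcal W}=-\ind(\mathcal V,\mathcal W^{\perp})$ (and analogously for the other two pairs), which is a direct consequence of Proposition~\ref{thm:coppieoperatori} and Corollary~\ref{thm:propFP}. This yields the stated cocycle identity $\ind(\mathcal V,\mathcal Z^{\perp})=\ind(\mathcal V,\mathcal W^{\perp})+\ind(\mathcal W,\mathcal Z^{\perp})$.
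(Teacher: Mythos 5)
Your verification of reflexivity and symmetry is fine, and you have correctly put your finger on the crux: the correction term $P_{\mathcal V}P_{\mathcal W^\perp}\vert_{\mathcal Z}$ is not compact in general, so the index cannot be transported from the composition $\PR{\mathcal V}{\mathcal W}\circ\PR{\mathcal W}{\mathcal Z}$ by a perturbation argument. But the repair you propose cannot be carried out. The finite dimensionality of $\Ker\PR{\mathcal V}{\mathcal Z}=\mathcal V^\perp\cap\mathcal Z$ does \emph{not} follow from the finite dimensionality of the four intersections $\mathcal V^\perp\cap\mathcal W$, $\mathcal V\cap\mathcal W^\perp$, $\mathcal W^\perp\cap\mathcal Z$, $\mathcal W\cap\mathcal Z^\perp$: take $\mathcal H=\mathcal H_0\times\mathcal H_0$ with $\mathcal H_0$ infinite dimensional, $\mathcal V=\{(x,x)\}$ the diagonal, $\mathcal W=\mathcal H_0\times\{0\}$, $\mathcal Z=\{(x,-x)\}$ the antidiagonal. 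Then $\PR{\mathcal V}{\mathcal W}$ and $\PR{\mathcal W}{\mathcal Z}$ are isomorphisms, so $(\mathcal V,\mathcal W),(\mathcal W,\mathcal Z)\in\Ecal(\mathcal H)$ and all four intersections are $\{0\}$; yet $\mathcal Z=\mathcal V^\perp$, so $\PR{\mathcal V}{\mathcal Z}=0$ has infinite dimensional kernel and $(\mathcal V,\mathcal Z)\notin\Ecal(\mathcal H)$. So the step ``show kernel and cokernel are finite dimensional'' is not merely unproven, it is false, and no bookkeeping in the splitting $\mathcal H=\mathcal W\oplus\mathcal W^\perp$ can recover it. There is a second, independent, gap: in the last paragraph you invoke the equality $\ind\PR{\mathcal V}{\mathcal Z}=\ind\PR{\mathcal V}{\mathcal W}+\ind\PR{\mathcal W}{\mathcal Z}$ as ``established above'', but what you established is only the index of the composition; since, as you yourself observe, its difference from $\PR{\mathcal V}{\mathcal Z}$ is not compact, nothing in your argument relates the two indices even when $\PR{\mathcal V}{\mathcal Z}$ happens to be Fredholm.

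For comparison, the paper argues that $\PR{\mathcal V^\perp}{\mathcal Z^\perp}$ is a \emph{finite rank} perturbation of $\PR{\mathcal V^\perp}{\mathcal W^\perp}\circ\PR{\mathcal W^\perp}{\mathcal Z^\perp}$, resting on the identity $\Ker(P_{\mathcal V^\perp}P_{\mathcal W})=\mathcal V+\mathcal W^\perp$; the correct kernel, however, is $(\mathcal V\cap\mathcal W)+\mathcal W^\perp$, which can be much smaller, and in the example above $P_{\mathcal V^\perp}P_{\mathcal W}$ restricted to $\mathcal Z^\perp=\mathcal V$ is injective, hence of infinite rank. In other words, the same example shows that transitivity of $\Ecal(\mathcal H)$ fails as stated, so the obstruction you ran into is intrinsic and not a defect of your particular strategy. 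The conclusion does hold, by exactly the compact--perturbation argument you began with, under a hypothesis that forces compactness of the correction term --- for instance when the subspaces are pairwise commensurable, which is the setting of $\Comm(\mathcal H)$ and of Lemma~\ref{catena} (see Proposition~\ref{thm:commimplicaFred}); and it is only this commensurable version of additivity that is actually used in the spectral flow arguments of Section~\ref{sec:spflow}. If you want a statement at the level of $\Ecal(\mathcal H)$, you must add an assumption (e.g.\ $P_{\mathcal V}P_{\mathcal W^\perp}$ or $P_{\mathcal V^\perp}P_{\mathcal W}$ compact) rather than try to prove transitivity outright.
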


\begin{proof}
The reflexivity and the symmetry of $\Ecal(\mathcal H)$ follow  easily
from Corollary~\ref{thm:propFP}.
The transitivity and equality on the index will follow by proving that $\PR {\mathcal V^\perp}{\mathcal Z^\perp}$ is a
compact (in fact, a finite rank) perturbation of the composition $\PR{\mathcal V^\perp}{\mathcal W^\perp}\circ \PR{\mathcal W^\perp}{\mathcal Z^\perp}$,
using the fact that the Fredholm index of operators is stable by compact perturbations,
and additive by composition. Consider the difference
$P_{\mathcal V^\perp}-P_{\mathcal V^\perp}P_{\mathcal W^\perp}=
P_{\mathcal V^\perp}P_{\mathcal W}$; we have
\[
\Ker(P_{\mathcal V^\perp}P_{\mathcal W})=P_\mathcal W^{-1}(\mathcal V)=\overline{\mathcal V+\mathcal W^\perp}=\mathcal V+\mathcal W^\perp.
\]
Hence
\[
\Ker\big(P_{\mathcal V^\perp}P_{\mathcal W}\vert_{\mathcal Z^\perp}\big)=(\mathcal V+\mathcal W^\perp)\cap \mathcal Z^\perp.
\]
Such a space has finite codimension in $\mathcal Z^\perp$, because
\[
\big((\mathcal V+\mathcal W^\perp)\cap \mathcal Z^\perp
\big)^\perp\cap \mathcal Z^\perp=\overline{(\mathcal V^\perp\cap \mathcal W)+\mathcal Z}\cap \mathcal Z^\perp=
\big((\mathcal V^\perp\cap \mathcal W)+\mathcal Z\big)\cap \mathcal Z^\perp.\]
The last equality follows from the fact that $\mathcal V^\perp\cap \mathcal W$ is finite dimensional, so that
$(\mathcal V^\perp\cap\mathcal W)+\mathcal Z$ is closed; moreover, $\big((\mathcal V^\perp\cap\mathcal W)+\mathcal Z\big)\cap\mathcal Z^\perp$
 has finite dimension
(recall part~\eqref{itm:dimfinita} of Lemma~\ref{thm:fatti}).
This shows that the restriction of $P_{\mathcal V^\perp}-P_{\mathcal V^\perp}P_{\mathcal W^\perp}$ to
$\mathcal Z^\perp$ has finite rank, which concludes the proof.
\end{proof}


\subsection{Commensurable subspaces}
Let us now recall the notion of commensurable spaces and relative dimension, introduced in \cite{Ab0} (see also \cite{Ab}).
\begin{defin}
\label{commspaces}
Two closed subspaces $\mathcal V$ and $\mathcal W$ of $\mathcal H$ are called \emph{commensurable} if $P_\mathcal V-P_\mathcal W$ is a compact operator. The \emph{relative dimension} of $\mathcal V$ with respect to $\mathcal W$ is defined as
\[
\dim(\mathcal V, \mathcal W)=\dim \mathcal V\cap \mathcal W\pp-\dim  \mathcal W\cap \mathcal V\pp.
\]
\end{defin}

An easy computation shows that $P_\mathcal V-P_\mathcal  W$ is compact if and only if so are both $P_{\mathcal V\pp}P_\mathcal  W$ and $P_{ \mathcal W\pp}P_\mathcal V$. Indeed:
\[P_\mathcal V-P_\mathcal W=P_\mathcal V(P_\mathcal W+P_{\mathcal W^\perp})-(P_\mathcal V+P_{\mathcal V^\perp})P_\mathcal W=P_\mathcal VP_{\mathcal W^\perp}-P_{\mathcal V^\perp}P_\mathcal W,\]
and
\[P_\mathcal VP_{\mathcal W^\perp}=(P_\mathcal V-P_\mathcal W)P_{\mathcal W^\perp},\quad P_{\mathcal V^\perp}P_\mathcal W=P_{\mathcal V^\perp}(P_\mathcal W-P_\mathcal V).\]

As a consequence, if $\mathcal V$ and $ \mathcal W$ are commensurable, then $I-P_{\mathcal V\pp} P_ \mathcal W$ and $I-P_{\mathcal  W\pp}P_\mathcal V$ are Fredholm operators of index zero being compact perturbations of Fredholm operators of index zero ($I$ denotes the identity on $\mathcal H$). Therefore,
\[
 \mathcal W\cap \mathcal V\pp=\ker (I-P_{\mathcal V\pp} P_ \mathcal W) \quad  \text{and} \quad \mathcal V\cap \mathcal  W\pp =\ker (I-P_{\mathcal  W\pp}P_\mathcal V)
\]
are finite dimensional and then the above definition of relative dimension is well posed.

If follows directly from the definition that commensurability
is an equivalence relation in the set of closed subspaces
of $\mathcal H$; we will set
\[\Comm(\mathcal H)=\Big\{(\mathcal V,\mathcal W):\text{ $\mathcal V$ is commensurable
with $\mathcal W$}\Big\}.\]

Let us see the following property (see \cite{Ab}).
\begin{lem}
\label{catena}
If $\mathcal V$, $\mathcal  W$ and $\mathcal Z$ are closed commensurable subspaces of $\mathcal H$, then
\[
\dim(\mathcal V,\mathcal Z)=\dim(\mathcal V, \mathcal W)+\dim(\mathcal  W,\mathcal Z).
\]
\end{lem}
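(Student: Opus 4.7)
The plan is to reduce the additivity of the relative dimension to the additivity of the Fredholm pair index already established in Corollary~\ref{thm:equivrel}. The bridge between the two notions is the identity
\[
\dim(\mathcal V,\mathcal W)=\dim(\mathcal V\cap \mathcal W^\perp)-\dim(\mathcal V^\perp\cap \mathcal W)=\ind(\mathcal V,\mathcal W^\perp),
\]
valid whenever $(\mathcal V,\mathcal W^\perp)\in\FP(\mathcal H)$, since $(\mathcal V+\mathcal W^\perp)^\perp=\mathcal V^\perp\cap \mathcal W$.

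The first step, therefore, is to show that commensurability of $\mathcal V$ and $\mathcal W$ implies $(\mathcal V,\mathcal W)\in\Ecal(\mathcal H)$, i.e.\ $(\mathcal V,\mathcal W^\perp)$ is a Fredholm pair. By Corollary~\ref{thm>diffprojFred} this amounts to proving that $P_\mathcal V-P_{\mathcal W^\perp}=P_\mathcal V+P_\mathcal W-I$ is Fredholm. I would write
\[
P_\mathcal V-P_{\mathcal W^\perp}=(P_\mathcal V-P_\mathcal W)+(2P_\mathcal W-I):
\]
the first summand is compact by the definition of commensurability, while the second is the orthogonal reflection across $\mathcal W$, hence an isometry (in particular an invertible, Fredholm operator of index zero). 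The sum is a compact perturbation of a Fredholm operator, and is therefore itself Fredholm.

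Now let $\mathcal V,\mathcal W,\mathcal Z$ be pairwise commensurable (transitivity of the relation $\Comm(\mathcal H)$ was already noted). By the previous step all three pairs $(\mathcal V,\mathcal W),(\mathcal W,\mathcal Z),(\mathcal V,\mathcal Z)$ belong to $\Ecal(\mathcal H)$. Corollary~\ref{thm:equivrel} applies and yields
\[
\ind(\mathcal V,\mathcal Z^\perp)=\ind(\mathcal V,\mathcal W^\perp)+\ind(\mathcal W,\mathcal Z^\perp).
\]
Rewriting each index as a relative dimension via the identity above gives exactly $\dim(\mathcal V,\mathcal Z)=\dim(\mathcal V,\mathcal W)+\dim(\mathcal W,\mathcal Z)$.

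The only delicate point is the opening reduction, where the compactness of $P_\mathcal V-P_\mathcal W$ must be converted into Fredholmness of $P_\mathcal V+P_\mathcal W-I$; after that, everything is bookkeeping with Corollary~\ref{thm:equivrel}. An alternative, more hands-on approach would try to decompose $\mathcal H$ using the orthogonal projections in order to identify $\mathcal V\cap \mathcal Z^\perp$ and $\mathcal V^\perp\cap \mathcal Z$ directly inside $\mathcal W$ and $\mathcal W^\perp$, but this turns out to be considerably more cumbersome than invoking the already proved chain rule for $\ind$.
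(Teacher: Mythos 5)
Your argument is correct, and each step is backed by results the paper actually proves: the decomposition $P_{\mathcal V}-P_{\mathcal W^\perp}=(P_{\mathcal V}-P_{\mathcal W})+(2P_{\mathcal W}-I)$ does show that commensurability forces $(\mathcal V,\mathcal W^\perp)\in\FP(\mathcal H)$ via Corollary~\ref{thm>diffprojFred}, the identity $\dim(\mathcal V,\mathcal W)=\ind(\mathcal V,\mathcal W^\perp)$ follows since the Fredholm-pair property makes $\mathcal V+\mathcal W^\perp$ closed with $(\mathcal V+\mathcal W^\perp)^\perp=\mathcal V^\perp\cap\mathcal W$, and Corollary~\ref{thm:equivrel} then delivers the additivity. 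Note, however, that the paper does not prove this lemma at all: it is quoted from \cite{Ab}, where the additivity is obtained by identifying $\dim(\mathcal V,\mathcal W)$ with the Fredholm index of the projection $\mathcal V\to\mathcal W$ and using additivity of the index under composition and its stability under compact perturbations. Your route is the same mechanism filtered through the paper's own Fredholm-pair formalism instead of projections between the two subspaces; what it buys is self-containedness (everything reduces to Proposition~\ref{thm:coppieoperatori}, Corollary~\ref{thm>diffprojFred} and Corollary~\ref{thm:equivrel}), and in fact your two preliminary steps are exactly the content of Proposition~\ref{thm:commimplicaFred} ($\Comm(\mathcal H)\subseteq\Ecal(\mathcal H)$ and formula \eqref{commfred}), which appears a few lines later in the paper and whose proof does not rely on Lemma~\ref{catena}, so no circularity arises either way; your symmetry trick even gives a slightly cleaner proof of that inclusion than the one printed there. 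One cosmetic remark: the relative dimension is only defined in the paper for commensurable pairs, so it is better to phrase the bridge identity as an equality of the two integers $\dim(\mathcal V\cap\mathcal W^\perp)-\dim(\mathcal V^\perp\cap\mathcal W)$ and $\ind(\mathcal V,\mathcal W^\perp)$ under the Fredholm-pair hypothesis, which is how you in fact use it since all pairs involved are commensurable.
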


\bre{codimfin}
Two subspaces $\mathcal V$ and $\mathcal  W$ of $\mathcal H$ of finite codimension are commensurable since $P_{\mathcal  W\pp} P_\mathcal V$ and $P_{\mathcal V\pp} P_\mathcal  W$ are compact having finite dimensional image. If $\codim \mathcal V=n$ and $\codim \mathcal  W=m$, by the above lemma it follows
\[
\dim(\mathcal V,\mathcal  W)=\dim(\mathcal V,\mathcal H)+\dim(\mathcal H,\mathcal  W)=m-n.	
\]

In particular, if $L:\mathcal H\to \mathcal H$ is a Fredholm operator of index zero, then $(\ker L)\pp$ e $\Im L$ are commensurable and their relative dimension is zero.

This property clearly fails  if $\mathcal V$ or $\mathcal  W$ has infinite codimension. Consider also the particular case when
$\mathcal H=\mathcal H_1\oplus \mathcal H_2$, direct sum of infinite dimensional subspaces, and
\[
L=\left(
\begin{array}{cc}
0 & L_{12}\\
L_{21} & 0\\
\end{array}
\right)
\]
where $L_{12}$ e $L_{21}$ are isomorphisms. Then $\mathcal H_1$ e $\mathcal H_2$ are isomorphic, but not commensurable.
\ere

\begin{prop}\label{thm:commimplicaFred}
$\Comm(\mathcal H)\subsetneq\Ecal(\mathcal H)$. If $(\mathcal V,\mathcal W)\in\Comm(\mathcal H)$, then
\beq{commfred}
\dim(\mathcal V,\mathcal  W)=\ind(\mathcal V,\mathcal  W\pp).
\eeq

\end{prop}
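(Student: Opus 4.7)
The plan is to deduce both the inclusion $\Comm(\mathcal H)\subseteq\Ecal(\mathcal H)$ and the formula \eqref{commfred} from Corollary~\ref{thm>diffprojFred}, and then exhibit an example witnessing the strictness of the inclusion.

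For the inclusion, suppose $(\mathcal V,\mathcal W)\in\Comm(\mathcal H)$, so that $K:=P_\mathcal V-P_\mathcal W$ is compact. By the criterion of Corollary~\ref{thm>diffprojFred}, to prove $(\mathcal V,\mathcal W)\in\Ecal(\mathcal H)$ it suffices to show that $P_\mathcal V-P_{\mathcal W^\perp}=P_\mathcal V+P_\mathcal W-I$ is a Fredholm operator on $\mathcal H$. The key observation is the identity
\[
P_\mathcal V+P_\mathcal W-I=(2P_\mathcal W-I)+K,
\]
in which $2P_\mathcal W-I=P_\mathcal W-P_{\mathcal W^\perp}$ is a self-adjoint involution on $\mathcal H$, hence a unitary isomorphism, hence Fredholm of index zero. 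Since Fredholmness is stable under compact perturbations, $P_\mathcal V-P_{\mathcal W^\perp}$ is Fredholm, which gives $(\mathcal V,\mathcal W^\perp)\in\FP(\mathcal H)$, i.e.\ $(\mathcal V,\mathcal W)\in\Ecal(\mathcal H)$.

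For the equality \eqref{commfred}, I would simply unravel the definitions. Since $(\mathcal V,\mathcal W^\perp)$ is now known to be a Fredholm pair, $\mathcal V+\mathcal W^\perp$ is closed by part~\eqref{itm:codimfinita} of Lemma~\ref{thm:fatti}, so
\[
\ind(\mathcal V,\mathcal W^\perp)=\dim(\mathcal V\cap\mathcal W^\perp)-\dim\bigl((\mathcal V+\mathcal W^\perp)^\perp\bigr)=\dim(\mathcal V\cap\mathcal W^\perp)-\dim(\mathcal V^\perp\cap\mathcal W).
\]
The right-hand side coincides verbatim with the definition of $\dim(\mathcal V,\mathcal W)$ given in Definition~\ref{commspaces}, yielding \eqref{commfred}.

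The only genuinely non-formal step is the strict inclusion, and it is carried out by an explicit example. Decompose $\mathcal H=\mathcal H_1\oplus\mathcal H_2$ as an orthogonal sum of two infinite-dimensional closed subspaces, take $\mathcal V=\mathcal H_1\oplus\{0\}$ and let $\mathcal W$ be the graph of an isometric isomorphism $U:\mathcal H_1\to\mathcal H_2$ (normalized so that $\mathcal W$ is a closed subspace). One checks directly that $\mathcal V\cap\mathcal W^\perp=\{0\}$ and $\mathcal V+\mathcal W^\perp=\mathcal H$, so $(\mathcal V,\mathcal W)\in\Ecal(\mathcal H)$, while a direct computation of the two orthogonal projections shows that $P_\mathcal V-P_\mathcal W$ has a one-dimensional non-compact operator pattern of the form $\tfrac12\bigl(\begin{smallmatrix}I&-U^*\\-U&-I\end{smallmatrix}\bigr)$ (up to normalization), which fails to be compact because $I$ and $U$ act on the infinite-dimensional factors $\mathcal H_1,\mathcal H_2$. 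Thus $(\mathcal V,\mathcal W)\notin\Comm(\mathcal H)$, and the inclusion is strict. The minor obstacle here is only bookkeeping with the isometric normalization; the conceptual content is the compact-plus-involution identity above.
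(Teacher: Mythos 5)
Your proposal is correct and follows essentially the same route as the paper: the inclusion rests on stability of Fredholmness under compact perturbation (you perturb the symmetry $2P_{\mathcal W}-I$ and invoke Corollary~\ref{thm>diffprojFred}, while the paper perturbs the identity to get $P_{\mathcal V}+P_{\mathcal W^\perp}=I+P_{\mathcal V}-P_{\mathcal W}$ and reads off kernel and image directly), the formula \eqref{commfred} is the same definition-unraveling the paper dismisses as straightforward, and your counterexample with the graph of an isometry in $\mathcal H_1\oplus\mathcal H_2$ is, up to an isometry of $\mathcal H$, the paper's diagonal $\{(x,x)\}$ in $H\times H$. All steps check out, including the block computation $P_{\mathcal V}-P_{\mathcal W}=\tfrac12\bigl(\begin{smallmatrix}I&-U^*\\-U&-I\end{smallmatrix}\bigr)$, whose diagonal blocks are non-compact on the infinite-dimensional factors.
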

\begin{proof}
If $(\mathcal V,\mathcal W)\in\Comm(\mathcal H)$, then the difference $P_\mathcal V-P_\mathcal W$ is compact,
and so the kernel of the Fredholm operator $I+P_\mathcal V-P_\mathcal W$ is finite dimensional:
\[
\Ker(I+P_\mathcal V-P_\mathcal W)=\Ker(P_\mathcal V+P_{\mathcal W^\perp})=\mathcal V^\perp\cap \mathcal W.
\]
On the other hand,
\[
\Codim(\mathcal V^\perp+\mathcal W)\le
\Codim(\Imm(P_{\mathcal V^\perp}+P_\mathcal W))=\Codim(\Imm(I+P_\mathcal W-P_\mathcal V))<+\infty.
\]
This proves that $(\mathcal V^\perp,\mathcal W)\in\FP(\mathcal H)$, i.e., $\Comm(\mathcal H)\subseteq\Ecal(\mathcal H)$.
The proof of formula \eqref{commfred} is straigthforward.

To see that $\Comm(\mathcal H)$ actually does not coincide with $\Ecal(\mathcal H)$ consider the following example.
Let $H$ be a real infinite dimensional separable Hilbert space,
and set
\[
\mathcal H=H\times H, \quad \mathcal V=H\times\{0\} \quad \text{and} \quad \mathcal W=\big\{(x,x):x\in H\big\}.
\]
Obviously, $\mathcal V\cap \mathcal W=\{0\}$ and $\mathcal V+\mathcal W=\mathcal H$, so that $(\mathcal V,\mathcal W)\in\FP(\mathcal H)$
and $(\mathcal V,\mathcal W^\perp)\in\Ecal(\mathcal H)$.
An immediate calculation shows that $P_\mathcal VP_\mathcal W:\mathcal H\to\mathcal H$ is
given by $P_\mathcal VP_\mathcal W(a,b)=\frac12(a+b)$, which is clearly not a compact operator
on $\mathcal H$, so $(\mathcal V,\mathcal W^\perp)\not\in\Comm(\mathcal H)$.
\end{proof}

The following results will be useful in the sequel.

\begin{prop}\cite[Prop. 2.3.2]{Ab}.\label{abbo}
Given two self-adjoint Fredholm operators $L$ and $L'$ such that $L-L'$ is compact, the negative (resp. the positive) eigenspaces are commensurable.
\end{prop}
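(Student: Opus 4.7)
The plan is to realize the orthogonal projections onto the negative eigenspaces through continuous functional calculus applied to a single cut-off function, and then transfer compactness of $L-L'$ to compactness of $P_{V^-(L)}-P_{V^-(L')}$ via polynomial approximation.

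First I would exploit that each of $L$ and $L'$ is self-adjoint Fredholm: this forces $0$ to be either absent from the spectrum or an isolated eigenvalue of finite multiplicity of each operator (the restriction of $L$ to $(\ker L)\pp$ has closed range and trivial kernel, hence is invertible there, and analogously for $L'$). Consequently there exists $\varepsilon>0$ such that
\[
\sigma(L)\cap[-\varepsilon,\varepsilon]\subseteq\{0\}\quad\text{and}\quad\sigma(L')\cap[-\varepsilon,\varepsilon]\subseteq\{0\}.
\]
I would then pick any continuous $f:\R\to[0,1]$ with $f(x)=1$ for $x\leq-\varepsilon$ and $f(x)=0$ for $x\geq-\varepsilon/2$. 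By the choice of $\varepsilon$ and continuous functional calculus, $f(L)=P_{V^-(L)}$ and $f(L')=P_{V^-(L')}$ (the value $f(0)=0$ correctly annihilates the possible kernel components, which lie outside the negative eigenspace).

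It remains to show that $f(L)-f(L')$ is compact. Fix $C>\Vert L\Vert+\Vert L'\Vert$, so that $\sigma(L)\cup\sigma(L')\subseteq[-C,C]$, and approximate $f$ uniformly on $[-C,C]$ by a sequence of polynomials $p_n$ via the Weierstrass theorem. Continuous functional calculus yields $\Vert p_n(L)-f(L)\Vert\to 0$ and $\Vert p_n(L')-f(L')\Vert\to 0$. For each polynomial, the telescoping identity
\[
L^k-(L')^k=\sum_{j=0}^{k-1}L^j(L-L')(L')^{k-1-j}
\]
together with compactness of $L-L'$ and the two-sided ideal property of the compact operators show that $p_n(L)-p_n(L')$ is compact. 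Since compacts form a norm closed subspace, $f(L)-f(L')$ is compact, so $P_{V^-(L)}$ and $P_{V^-(L')}$ are commensurable. The positive eigenspace case follows by applying the same argument to $-L$ and $-L'$, since $V^+(L)=V^-(-L)$.

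The principal subtlety is producing a \emph{single} $\varepsilon$ that simultaneously separates $0$ from the rest of the spectrum of both $L$ and $L'$: this is precisely what the self-adjoint Fredholm hypothesis guarantees for each operator separately, after which one simply takes the minimum of the two individual spectral gaps.
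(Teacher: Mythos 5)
Your argument is correct and complete: the self-adjoint Fredholm hypothesis does give a common $\varepsilon>0$ isolating $0$ from the rest of both spectra, your cut-off $f$ then agrees on $\sigma(L)$ and $\sigma(L')$ with the indicator of the negative half-line, so $f(L)=P_{V^-(L)}$ and $f(L')=P_{V^-(L')}$, and the Weierstrass--telescoping--closed-ideal argument correctly transfers compactness of $L-L'$ to $f(L)-f(L')$; the positive case via $-L,-L'$ is also fine. Note, however, that the paper itself offers no proof of this statement: it is quoted verbatim from Abbondandolo \cite[Prop.~2.3.2]{Ab}, so there is no internal argument to compare against. Your route is the standard functional-calculus one; the usual alternative (and essentially the one in the cited source) replaces polynomial approximation by the Riesz--Dunford projection, writing $P_{V^-(L)}$ as a contour integral of the resolvent over a curve enclosing the negative spectrum and avoiding the gap around $0$, and then using the resolvent identity $(z-L)^{-1}-(z-L')^{-1}=(z-L)^{-1}(L'-L)(z-L')^{-1}$ together with compactness of $L-L'$ and norm-closedness of the compacts under integration. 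Both proofs rest on the same two pillars you identified -- the spectral gap at $0$ for each operator and the fact that the compact operators form a closed two-sided ideal -- so the choice between them is a matter of taste; your version has the small advantage of avoiding holomorphic functional calculus altogether.
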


\begin{prop}\cite[Prop. 2.3.6]{Ab}.
\label{abbo236}
Let $B$ be a symmetric Fredholm form on $\mathcal H$ and $T$ the self-adjoint Fredholm operator associated with $B$.
Let $\mathcal V$ be a closed subspace of $\mathcal H$. Suppose  that $B$ is negative definite on $\mathcal V$ and positive semidefinite on $\mathcal V\ppb$. Then $(\mathcal V, V^+(T)\oplus \ker T)$ is a Fredholm pair of index zero.
\end{prop}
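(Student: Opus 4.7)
My strategy is to apply Proposition~\ref{thm:coppieoperatori} and show that the orthogonal projection $P_{V^-(T)}$, restricted to $\mathcal V$, is an isomorphism onto $V^-(T)$; since $(V^+(T)\oplus\ker T)^\perp=V^-(T)$, this will directly give that $(\mathcal V,V^+(T)\oplus\ker T)$ is a Fredholm pair of index zero. Throughout I write $V^\pm$ for $V^\pm(T)$ and $V^0$ for $\ker T$. The crucial spectral input, available because $T$ is self-adjoint Fredholm, is that $0$ is isolated in $\sigma(T)$; hence there exists $\delta>0$ such that $T\vert_{V^-}$ is a self-adjoint isomorphism of $V^-$ with $-T\vert_{V^-}\ge\delta\,\mathrm{Id}$, and $T\vert_{V^+}\ge\delta\,\mathrm{Id}$.

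The injectivity $\mathcal V\cap(V^+\oplus V^0)=\{0\}$ is immediate: a vector $v$ in the intersection satisfies $B(v,v)\le 0$ from the hypothesis on $\mathcal V$ and $B(v,v)\ge 0$ from the hypothesis on $V^+\oplus V^0$, so $B(v,v)=0$ and negative definiteness on $\mathcal V$ forces $v=0$. For density of $P_{V^-}(\mathcal V)$ in $V^-$, I take $u\in V^-$ orthogonal in $V^-$ to $P_{V^-}(\mathcal V)$ and set $u_0=(T\vert_{V^-})^{-1}u\in V^-$. For every $v\in\mathcal V$, mutual orthogonality of the three eigenspaces gives
\[
B(u_0,v)=\langle Tu_0,v\rangle=\langle u,v\rangle=\langle u,P_{V^-}v\rangle=0,
\]
so $u_0\in\mathcal V\ppb$; the hypothesis then gives $B(u_0,u_0)\ge 0$, while $u_0\in V^-$ gives $B(u_0,u_0)\le-\delta\|u_0\|^2$. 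Hence $u_0=0$, so $u=0$.

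Closedness of $P_{V^-}(\mathcal V)$ is the technical heart. Writing $v=v^-+v^++v^0$ for $v\in\mathcal V$, the strict negativity $B(v,v)<0$ combined with the spectral gap yields $\delta\|v^+\|^2\le B(v^+,v^+)<-B(v^-,v^-)\le\|T\|\,\|v^-\|^2$, so $\|v^+\|\le C\|v^-\|$ with $C=(\|T\|/\delta)^{1/2}$. On the subspace $\mathcal V_0:=\mathcal V\cap(V^0)^\perp$, where $v^0=0$, this gives $\|v\|\le\sqrt{1+C^2}\,\|v^-\|$, so $P_{V^-}\vert_{\mathcal V_0}$ is bounded below and has closed image. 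Since $V^0$ is finite-dimensional, $\mathcal V_0$ has finite codimension in $\mathcal V$, so $P_{V^-}(\mathcal V)$ is the sum of a closed and a finite-dimensional subspace, hence closed. Combined with density, $P_{V^-}(\mathcal V)=V^-$, and together with injectivity the open mapping theorem promotes $P_{V^-}\vert_{\mathcal V}$ to a Hilbert-space isomorphism; Proposition~\ref{thm:coppieoperatori} then yields the conclusion.

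The main obstacle is precisely the closedness step: the hypothesis directly controls $\mathcal V\ppb$ (which drives the density argument) but does not by itself prevent vectors in $\mathcal V$ from having arbitrary $\ker T$-components. The remedy is to isolate these components by passing to the finite-codimensional subspace $\mathcal V\cap(V^0)^\perp$, where the spectral gap produces a coercive estimate for $P_{V^-}$, and then recovering the removed part as a finite-dimensional correction to the image.
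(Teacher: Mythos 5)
Your argument is correct. Note, however, that the paper does not prove Proposition~\ref{abbo236} at all: it is quoted verbatim from Abbondandolo's book (\cite[Prop.~2.3.6]{Ab}), so there is no internal proof to measure you against; what you have supplied is a self-contained proof built on the paper's own machinery, namely Proposition~\ref{thm:coppieoperatori} together with Corollary~\ref{thm:propFP} (your map is $\PR{\mathcal W^\perp}{\mathcal V}$ for the pair $(\mathcal W,\mathcal V)$ with $\mathcal W=V^+(T)\oplus\ker T$, and the symmetry of the index handles the order of the pair). All three steps check out: the injectivity is the obvious sign argument; the density argument via $u_0=(T\vert_{V^-})^{-1}u\in\mathcal V\ppb$ is clean and in fact only needs negative definiteness of $B$ on $V^-$, not the gap; and the closedness step is where the spectral gap genuinely enters, through the estimate $\delta\Vert v^+\Vert^2\le B(v^+,v^+)\le -B(v^-,v^-)\le\Vert T\Vert\,\Vert v^-\Vert^2$ (read with weak inequalities so that $v=0$ is included), which is legitimate because for a self-adjoint Fredholm operator $0$ is isolated in the spectrum and $V^\pm(T)$ are the spectral subspaces of \eqref{decspet}. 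Passing to $\mathcal V\cap(\ker T)^\perp$ and recovering the image up to a finite-dimensional summand is the right fix for the lack of control on the $\ker T$-component. In fact your proof yields more than the statement asks: it shows $\mathcal V$ and $V^+(T)\oplus\ker T$ are complementary ($\mathcal V\cap\mathcal W=\{0\}$ and $\mathcal V+\mathcal W=\mathcal H$), of which ``Fredholm pair of index zero'' is a weak consequence; this is consistent with, and slightly sharper than, the cited result.
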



\subsection{Relative dimension of negative eigenspaces}

Let us recall that $B$ denotes a symmetric Fredholm form on the Hilbert space $\mathcal H$ and $T$ is the self-adjoint Fredholm operator
associated with $B$.

Let $\mathcal V$ be a closed subspace of $\mathcal H$ of finite codimension.
Call $\widetilde T=P_{\mathcal V}\circ T|_{\mathcal V}:\mathcal V\to \mathcal V$ the linear operator associated  with $B|_{\mathcal V\times \mathcal V}$, which is clearly a self-adjoint Fredholm operator (since $B|_{\mathcal V\times \mathcal V}$ is symmetric).

Recalling the spectral decomposition \eqref{decspet} of $\mathcal H$, induced by $B$, in this subsection we prove that $V^-(T)$ and $V^-(\wt )$ are commensurable and we give some results concerning the relative dimension $\dim(V^-(T),V^-(\wt))$ in different particular cases. The most general case, when $\mathcal V$ is any finite codimensional subspace of $\mathcal H$, will be tackled in Proposition \ref{final} below.

\begin{prop}
\label{spazicomm}
Given $T$ and $\wt$ as above, $V^-(T)$ and $V^-(\wt)$ are commensurable.
\epr

\begin{proof}
Define $\wh :\mathcal H\to \mathcal H$ as $\wh:=i\circ \wt\circ P_{\mathcal V}$ where $i:\mathcal V\hookrightarrow \mathcal H$ is the inclusion. It is immediate to see that $\wh$ is a Fredholm operator of index zero. In fact, the index of $P_{\mathcal V}$ coincides with the codimension of $\mathcal V$ in $\mathcal H$, while $\ind\, i=-\codim \mathcal V$. It is known that the composition of Fredholm operators is a Fredholm operator whose index is the sum of the indices of the components.

In the decomposition $\mathcal H=\mathcal V\oplus \mathcal V\pp$ we can represent $\wh$ in the block-matrix form as
\[
\wh=\left(
\begin{array}{cc}
\wt & 0\\
0 & 0\\
\end{array}
\right).
\]
As $\wt$ is self-adjoint, so is $\wh$.

Since $\mathcal V$ has finite codimension in $\mathcal H$, it follows that $T-\wh$ is compact. Indeed, consider the block-matrix representation of $T$ in the splitting $\mathcal H=\mathcal V\oplus \mathcal V\pp$:
\[
T=\left(
\begin{array}{cc}
\wt & T_{12}\\
T_{21} & T_{22}\\
\end{array}
\right),
\]
where $T_{12}=P_\mathcal V\circ T|_{\mathcal V\pp}$, $T_{21}=P_{\mathcal V\pp}\circ T|_\mathcal V$ and $T_{22}=P_{\mathcal V\pp}\circ T|_{\mathcal V\pp}$. These three operators have finite dimensional image. Therefore,
\[
T-\wh=\left(
\begin{array}{cc}
0 & T_{12}\\
T_{21} & T_{22}\\
\end{array}
\right)
\]
turns out to have finite dimensional image, and then it is compact.
We obtain, by Proposition \ref{abbo},  that $V^-(T)$ and $V^-(\wh)$ are commensurable.

Consider now the spectral decompositions of $\mathcal H$ induced by $\wh$ and of $\mathcal V$ induced by $\wt$:
\[
\mathcal H=V^-(\wh)\oplus V^+(\wh)\oplus \ker \wh, \quad
\mathcal V=V^-(\wt)\oplus V^+(\wt)\oplus \ker \wt.
\]
Since $\ker \wh = \ker \wt\oplus \mathcal V\pp$, we have
\[
\mathcal H=V^-(\wt)\oplus V^+(\wt)\oplus \ker \wh.
\]

The Fredholm form associated with $\wh$ is negative definite on $V^-(\wt)$ and positive on $V^+(\wt)$, as the definition of $\wh$ immediately shows. In addition both the spaces are invariant with respect to $\wh$. Therefore, by the uniqueness of the spectral decomposition, the above formula is actually the spectral decomposition of $\mathcal H$ by $\wh$, that is,
\[
V^-(\wh)=V^-(\wt) \quad  {\rm and} \quad V^+(\wh) =V^+(\wt).
\]
We have seen that $V^-(T)$ and $V^-(\wh)$ are commensurable. Of course, so are $V^-(T)$ and $V^-(\wt)$ and the proof is complete.
\end{proof}

Lemmas \ref{reldim} and \ref{forindgen} below give an answer to the question concerning the  relative dimension of $(V^-(T),V^-(\wt))$ in two particular cases. These results are interesting in themselves and propaedeutic to Proposition \ref{final}.

\ble{reldim}
Suppose $\mathcal H=\mathcal V+\vpb$. Let $T_2:=P_{\vpb}\circ \, T|_{\vpb}:\vpb\to \vpb$ be the linear operator associated  with $ B|_{\vpb \times \vpb}$. One has
\[
\dim(V^-(T),V^-(\wt))= \dim V^-(T_2).
\]
\ele

\begin{proof} It is immediate to see that $\mathcal V\cap \vpb$ is an isotropic space for $B$. Hence
\[
V^\pm(\wt)\cap V^\pm(T_2)=V^\pm(\wt)\cap \ker T_2 = \ker \wt\cap V^\pm(T_2)=\{0\}.
\]
Thus, given
\beq{vcappello}
\widehat V^-:=V^-(\wt)\oplus V^-(T_2) \quad {\rm and}\quad \widehat V^+:=V^+(\wt)\oplus V^+(T_2),
\eeq
and recalling that $\ker \wt\subseteq \ker T_2=\ker T$ (Lemma \ref{bortsommadir}), we have
\[
H=\widehat V^-\oplus \widehat V^+\oplus \ker T.
\]
Let us show that:
\begin{itemize}
\item [a)] $B$ is negative definite on $\widehat V^-$ and positive on $\widehat V^+$;\smallskip
\item [b)] $\widehat V^-$ and  $\widehat V^+$ are $B$-orthogonal.
\end{itemize}
a) Let $x\in\widehat V^-$ be given and write $x=x_1+x_2$ in the splitting $\widehat V^-= V^-(\wt) \oplus V^-(T_2)$. We have
\[
\ra Tx,x\la= \ra Tx_1+Tx_2,x_1+x_2\la=\ra Tx_1,x_1\la+\ra Tx_2,x_2 \la=\ra \wt x_1,x_1\la+\ra T_2x_2,x_2 \la
\]
(notice that $\ra Tx_1,x_2\la=0=\ra Tx_2,x_1\la$ since $x_1\in \mathcal V$ and $x_2\in \vpb$). The last two summands are, by definition of $\widehat V^-$, less or equal to zero, and not both zero if $x\neq 0$. Then $B$ is negative definite on $\widehat V^-$. The proof of the analogous result for $\widehat V^+$ is identical and omitted.

b) Let $x\in \widehat V^-$ and  $y\in\widehat V^+$ be given. By the decompositions
\eqref{vcappello},  write $x=x_1+x_2$ and $y=y_1+y_2$. Hence
\[
\ra Tx,y\la= \ra Tx_1+Tx_2,y_1+y_2\la=\ra Tx_1,y_1\la+\ra Tx_2,y_2 \la=\ra \wt x_1,y_1\la+ \ra T_2x_2, y_2  \la=0.
\]
The last equality is due to the fact that $V^-(\wt)$ and $V^+(\wt)$ are ($B|_{\mathcal V \times \mathcal V}$)-orthogonal, while $V^-(T_2)$ and $V^+(T_2)$ are ($B|_{\vpb \times \vpb}$)-orthogonal.

\medskip

We are now in the position to apply Proposition \ref{abbo236} to the pair $(\widehat V^-,V^+(T)\oplus \ker T)$ obtaining that it is a Fredholm pair of index zero.

Observe that $\widehat V^-$ and $V^-(T)$ are commensurable. Indeed $V^-(\wt)$ and $V^-(T)$ are commensurable by Proposition \ref{spazicomm}; in addition $\widehat V^-$ and $V^-(\wt)$ are of course commensurable since $V^-(T_2)$ has finite dimension. Now, recalling that $V^-(T)$ is the orthogonal complement of $V^+(T)\oplus \ker T$, by formula \eqref{commfred} it follows
\[
\dim(\widehat V^-,V^-(T))=0.
\]
In addition, it is immediate to see that
\[
\dim(\widehat V^-,V^-(\wt))=\dim V^-(T_2).
\]
By Lemma \ref{catena} we have
\[
\dim(V^-(T),V^-(\wt))= \dim V^-(T_2)
\]
and the proof is complete.
\end{proof}

\ble{forindgen}
Let $\mathcal Z$ be a finite dimensional subspace of $\mathcal H$, isotropic with respect to $B$, and call $L:\mathcal Z\ppb\to \mathcal Z\ppb$ the operator associated with $B|_{\mathcal Z\ppb\times \mathcal Z\ppb}$. Then $V^-(T)$ is commensurable with $V^-(L)$ and
\[
\dim(V^-(T),V^-(L))=\dim \mathcal Z -\dim (\mathcal Z\cap \ker T).
\]
\ele

\begin{proof}
Since $\mathcal Z$ is isotropic, we have $\mathcal Z\subseteq \mathcal Z\ppb$. Observe that $\mathcal Z\ppb$ is the orthogonal complement of $T(\mathcal Z)$ in $\mathcal H$. Therefore the codimension of $\mathcal Z\ppb$ in $\mathcal H$ is finite and
\beq{aster}
\codim \mathcal Z\ppb=\dim \mathcal Z-\dim (\ker T\cap \mathcal Z).
\eeq

The kernel of  $B|_{\mathcal Z\ppb\times\mathcal  Z\ppb}$ is
\[
\begin{aligned}
\ker B|_{\mathcal Z\ppb\times\mathcal  Z\ppb} & =\{x\in\mathcal  Z\ppb: \ra Tx,y\la=0, \; \forall y\in \mathcal Z\ppb\} \\
\, & = \{x\in \mathcal Z\ppb: \ra x,Ty\la=0, \; \forall y\in \mathcal Z\ppb\}.
\end{aligned}
\]
That is, $\ker B|_{\mathcal Z\ppb\times \mathcal Z\ppb}$ is a subspace of the orthogonal complement of $T(\mathcal Z\ppb)$ in $\mathcal H$. Hence, taking into account \eqref{aster}, one has
\[
\dim\ker B|_{\mathcal Z\ppb\times \mathcal Z\ppb}\leq \dim\mathcal  Z-\dim (\ker T\cap\mathcal  Z)+\dim \ker T.
\]
Since $\mathcal Z$ is isotropic, we have that $\mathcal Z\subseteq\ker B|_{\mathcal Z\ppb\times Z\ppb}$. Of course $\ker T\subseteq \mathcal Z\ppb$. Since
\[
\dim(\mathcal Z+\ker T)=\dim \mathcal Z-\dim (\ker T\cap \mathcal Z)+\dim \ker T,
\]
it follows
\[
\ker B|_{\mathcal Z\ppb\times \mathcal Z\ppb}=\mathcal Z+\ker T.
\]

The spectral decomposition of $\mathcal Z\ppb$ with respect to $B|_{\mathcal Z\ppb\times \mathcal Z\ppb}$ is
\[
\mathcal Z\ppb =V^-(L)\oplus V^+(L)\oplus (\mathcal Z+\ker T),
\]
and $\mathcal V:=V^-(L)\oplus V^+(L)$ is the orthogonal complement of $\mathcal Z+\ker T$ in $\mathcal Z\ppb$.
Observe that $B|_{\mathcal V\times \mathcal V}$ is  nondegenerate and then, by (2) in Lemma  \ref{bortsommadir},
\[
\mathcal H=\mathcal V\oplus \vpb.
\]
Since $\mathcal Z+\ker T$ is $B$-orthogonal to $\mathcal V$ it turns out to be contained in $\vpb$. An immediate computation says that
\[
\dim \vpb=2(\dim \mathcal Z-\dim(\mathcal Z\cap \ker T))+\dim \ker T.
\]

Call $\wt$ the operator associated with $B|_{\mathcal V\times \mathcal V}$ and $T_2$ that associated with $ B|_{\vpb\times \vpb}$. By Proposition \ref{reldim}, we have that $V^-(T)$ and $V^-(\wt)$ are commensurable and
\[
\dim(V^-(T),V^-(\wt))= \dim V^-(T_2).
\]

On the other hand $V^-(L)=V^-(\wt)$. Therefore, the proof is complete if we show that $\dim V^-(T_2)=\dim \mathcal Z-\dim (\mathcal Z\cap \ker T)$.

It is crucial now to notice that $\mathcal Z\subseteq \vpb$; this immediately follows from the inclusion $\mathcal Z+\ker T\subseteq \vpb$. By Proposition \ref{isomorse} we have, since $\mathcal Z$ is isotropic,
\[
\dim V^+(T_2)\geq \dim \mathcal Z -\dim (\mathcal Z\cap \ker T), \quad  \dim V^-(T_2)\geq \dim \mathcal Z- \dim (\mathcal Z\cap \ker T).
\]
Then
\[
\dim V^+(T_2)=\dim \mathcal Z-\dim (\mathcal Z\cap \ker T)=\dim V^-(T_2)
\]
and the proof is complete.
\end{proof}

We are now in the position to present the main result of this section, concerning the relative dimension of the negative eigenspaces of a self-adjoint Fredholm operator and its restriction to any closed finite codimensional subspace of $\mathcal H$.

\begin{prop}\label{final}
Let $B$ be a Fredholm symmetric bilinear form on $\mathcal H$ and
let $\mathcal V$ be a closed finite codimensional subspace of $\mathcal H$.
Denote by $T:\mathcal H\to \mathcal H$ and $\widetilde T=P_{\mathcal V}\circ T\vert_{\mathcal V}:{\mathcal V}\to {\mathcal V}$
the self-adjoint Fredholm operators associated with $B$ and $B|_{{\mathcal V}\times {\mathcal V}}$ respectively.
Then:
\[
\Dim\big(V^-(T),V^-(\widetilde T)\big)=\mathrm n_-\big(B|_{{\mathcal V}^{\perp_B}\times {\mathcal V}^{\perp_B}}\big)+\Dim({\mathcal V}\cap \vpb)
-\Dim\big({\mathcal V}\cap\ker T \big).\]
\end{prop}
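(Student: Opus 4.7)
The plan is to reduce the general situation to the two special cases already handled: Lemma~\ref{reldim} (which requires $\mathcal H=\mathcal V+\vpb$) and Lemma~\ref{forindgen} (which removes a finite dimensional isotropic subspace). The bridge between the two is the finite dimensional isotropic subspace $\mathcal Z:=\mathcal V\cap\vpb$, which is indeed finite dimensional since $\vpb$ is (because $\mathcal V$ has finite codimension, by Remark~\ref{osssulortog}(ii)), and is isotropic because every vector of $\vpb$ is $B$-orthogonal to every vector of $\mathcal V$.

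First I would invoke Lemma~\ref{bortsommadir}(5) to identify $\mathcal Z\ppb=\mathcal V+\vpb$. Let $L$ be the self-adjoint Fredholm operator on $\mathcal Z\ppb$ associated with $B\vert_{\mathcal Z\ppb\times\mathcal Z\ppb}$ (Fredholmness follows from Lemma~\ref{thm:restrFred}, as $\mathcal Z\ppb$ has finite codimension in $\mathcal H$). Applying Lemma~\ref{forindgen} to the isotropic subspace $\mathcal Z$ gives that $V^-(T)$ and $V^-(L)$ are commensurable with
\[
\dim\bigl(V^-(T),V^-(L)\bigr)=\dim\mathcal Z-\dim(\mathcal Z\cap\ker T).
\]

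Next, I would view the Hilbert space as $\mathcal Z\ppb$ equipped with the symmetric Fredholm form $B\vert_{\mathcal Z\ppb\times\mathcal Z\ppb}$, the associated operator being $L$, and apply Lemma~\ref{reldim} to the subspace $\mathcal V\subseteq\mathcal Z\ppb$. The relevant $B$-orthogonal complement computed inside $\mathcal Z\ppb$ is $\vpb\cap\mathcal Z\ppb=\vpb$, since $\vpb\subseteq\mathcal V+\vpb=\mathcal Z\ppb$; and the sum condition $\mathcal Z\ppb=\mathcal V+\vpb$ is exactly the identity just recalled. The operator $\widetilde T$ obtained this way coincides with the original $\widetilde T=P_{\mathcal V}\circ T\vert_{\mathcal V}$, and the analog of $T_2$ is the operator on $\vpb$ associated with $B\vert_{\vpb\times\vpb}$. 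Lemma~\ref{reldim} therefore yields
\[
\dim\bigl(V^-(L),V^-(\widetilde T)\bigr)=\dim V^-(T_2)=\mathrm n_-\bigl(B\vert_{\vpb\times\vpb}\bigr).
\]

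Finally, concatenating the two relative dimensions via Lemma~\ref{catena} produces
\[
\dim\bigl(V^-(T),V^-(\widetilde T)\bigr)=\mathrm n_-\bigl(B\vert_{\vpb\times\vpb}\bigr)+\dim(\mathcal V\cap\vpb)-\dim(\mathcal Z\cap\ker T),
\]
and the proof closes with the identity $\mathcal Z\cap\ker T=\mathcal V\cap\ker T$, which holds because $\ker T\subseteq\vpb$ (Remark~\ref{osssulortog}(i)), so the intersection of $\mathcal V$ with $\ker T$ automatically lands in $\vpb$ as well. The only slightly delicate step is the verification that Lemma~\ref{reldim}, as stated for a symmetric Fredholm form on a Hilbert space, can be legitimately re-used with $\mathcal Z\ppb$ in place of $\mathcal H$; this is where one must track carefully that the $B$-orthogonal complement of $\mathcal V$ computed inside $\mathcal Z\ppb$ agrees with the one computed inside $\mathcal H$, so that the operator appearing on the right hand side is exactly the one featured in the statement.
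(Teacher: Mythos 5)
Your proof is correct and follows essentially the same route as the paper: remove the finite dimensional isotropic subspace $\mathcal Z=\mathcal V\cap\vpb$ via Lemma~\ref{forindgen}, apply Lemma~\ref{reldim} inside $\mathcal Z\ppb=\mathcal V+\vpb$ (Lemma~\ref{bortsommadir}(5)), and concatenate with Lemma~\ref{catena}. Your extra checks — that the $B$-orthogonal complement of $\mathcal V$ computed in $\mathcal Z\ppb$ is still $\vpb$, and that $\mathcal Z\cap\ker T=\mathcal V\cap\ker T$ because $\ker T\subseteq\vpb$ — are points the paper leaves implicit, and they are correctly handled.
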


\begin{proof}
Clearly $\mathcal Z:=\mathcal V\cap \vpb$ is an isotropic space. In addition it is finite dimensional since so is $\vpb$ (Remark \ref{osssulortog}, statement ii)). Let $R:\mathcal Z\ppb\to \mathcal Z\ppb$ be the linear operator associated with $B|_{\mathcal Z\ppb\times \mathcal Z\ppb}$. Then, by Lemma \ref{forindgen},
\[
\dim(V^-(T),V^-(R))=\dim \mathcal Z-\dim (\mathcal Z\cap \ker T).
\]

Now, as $\mathcal Z\ppb=\mathcal V+\vpb$ by statement (5) in Lemma  \ref{bortsommadir}, we can apply to $\mathcal Z\ppb$ Lemma \ref{reldim} and we obtain
\[
\dim(V^-(R),V^-(\wt))=\mathrm n_-(B|_{\vpb\times \vpb}).
\]
By Lemma \ref{catena} the claim follows.
\end{proof}
\end{section}


\begin{section}{On the spectral flow}
\label{sec:spflow}

\subsection{Generalities on the notion of spectral flow}\label{sectionsf}
Let us denote by $\Fredsa(\mathcal H)$ the set of self-adjoint Fredholm operators in $\mathcal H$.
Given a continuous path $T:[a,b]\to\Fredsa(\mathcal H)$, we will denote by $\spfl(T,[a,b])$
the \emph{spectral flow} of $T$ on the interval $[a,b]$, which is an integer
number that gives, roughly speaking, the net number of eigenvalues of $T$ that pass through the value $0$.

There
exist several equivalent definitions of the spectral flow in the literature, although the reader
should  note that there exist different conventions on the contribution of the endpoints in the
case when $T_a$ and/or $T_b$ are not invertible.

A possible definition of spectral flow using
functional calculus is given in \cite{Phillips} as follows. Let $t_0=a<t_1<\ldots<t_N=b$ be a partition
of $[a,b]$, and $a_1,\ldots,a_N$ be positive numbers with the property that, denoting by $\chi_I$ the
characteristic function of the interval $I$, for $i=1,\ldots,N$ the following hold:

\begin{itemize}
\item[(a)] the map $[t_{i-1},t_i]\ni t\mapsto\chi_{[-a_i,a_i]}(T_t)$ is continuous,
\item[(b)] $\chi_{[-a_i,a_i]}(T_t)$ is a projection onto a finite dimensional subspace of $\mathcal H$.
\end{itemize}
Then, $\spfl\big(T,[a,b]\big)$ is defined by the sum
\[
\spfl\big(T,[a,b]\big)=\sum_{i=1}^N=\big[\mathrm{rk}\big(\chi_{[0,a_i]}(T_{t_i})-\mathrm{rk}\big(\chi_{[0,a_i]}(T_{t_{i-1}})\big],
\]
where $\mathrm{rk}(P)$ denotes the rank of a projection $P$. With this definition, in the particular case when $T$ is a path of essentially positive operators, that is, the negative spectrum of each operator $T_t$ has only isolated eigenvalues of finite multiplicity, then the spectral flow of $T$ is given by
\[
\spfl(T,[a,b])=\mathrm n_-(T_b)+\Dim\big(\Ker T_b\big)-
\mathrm n_-(T_a)-\Dim\big(\Ker T_a\big).
\]

The spectral flow is additive by concatenation of paths,
and invariant by fixed-endpoints homotopies, and it therefore defines a $\Z$-valued homomorphism on
the fundamental group\-oid of $\Fredsa(\mathcal H)$. In fact, one shows easily that the spectral flow
is invariant by the larger class of homotopies that leave constant the dimension of the kernel at the endpoints.
Moreover, the spectral flow is invariant by cogredience, i.e., given Hilbert spaces $\mathcal H_1$,
$\mathcal H_2$, a continuous path $T:[a,b]\to\Fredsa(\mathcal H_2)$ and
a continuous path of isomorphisms $S:[a,b]\to\mathrm{Iso}(\mathcal H_1,\mathcal H_2)$, then the spectral flow
of the path $[a,b]\ni t\mapsto S_t^*T_tS_t\in\Fredsa(\mathcal H_1)$ equals the spectral flow of $T$.

\smallskip

We are interested in computing the spectral flow of paths of  self-adjoint Fredholm operators
that are compact perturbations of a fixed symmetry of the Hilbert space $\mathcal H$. By a \emph{symmetry}
of $\mathcal H$ we mean a bounded operator $\mathfrak I$ on $\mathcal H$ of the form $\mathfrak I=P_{\mathcal W}-P_{{\mathcal W}^\perp}=2P_{\mathcal W}-I$, where ${\mathcal W}$ is a given closed subspace of $\mathcal H$. Equivalently, $\mathfrak I$ is a symmetry if it is self-adjoint and it satisfies $\mathfrak I^2=I$, the identity map of $\mathcal H$.


A symmetry $\mathfrak I$ can be represented, with respect to the decomposition $\mathcal H=\mathcal W \oplus {\mathcal W}^\perp $, by the matrix
\[
\begin{pmatrix}
I_{\mathcal W} & 0 \\
0 & -I_{{\mathcal W}^\perp}
\end{pmatrix}\]
where $I_{\mathcal W}$ and $I_{{\mathcal W}^\perp}$ are the identity maps of $\mathcal W$ and ${\mathcal W}^\perp$, respectively.

A compact perturbation of $\mathfrak I$ is essentially positive, essentially negative or strongly indefinite according to whether $\mathcal W^\perp$ is finite dimensional, $\mathcal W$ is finite dimensional, or both $\mathcal W$ and $\mathcal W^\perp$ are infinite dimensional, respectively. Of course the last case could happen only if $\mathcal H$ is infinite dimensional.

Given a continuous curve $T:[a,b]\to\Fredsa(\mathcal H)$ of the form $T_t=\mathfrak I+K_t$, where $\mathfrak I$ is a symmetry of $\mathcal H$
and $K_t$ is a self-adjoint
compact operator on $\mathcal H$, then the spectral flow of $T$ can be computed in terms of the notion of  relative dimension, recalled in the above section, as follows: by Proposition \ref{abbo}
the spaces $V^-(T_a)$ and $V^-(T_b)$ are commensurable, and
\beq{sfdimrel}
\spfl\big(T,[a,b]\big)=\Dim\big(V^-(T_a),V^-(T_b)\big).
\eeq
Here comes an immediate observation, that will be useful ahead.

\begin{prop}\label{thm:dipendedagliestremi}
For a continuous path $T:[a,b]\to\Fredsa(\mathcal H)$ of the form $T_t=\mathfrak I+K_t$, where $\mathfrak I$ is a symmetry of $\mathcal H$
and $K_t$ is a self-adjoint compact operator on $\mathcal H$, the spectral flow $\spfl(T,[a,b])$ depends only on the
endpoints $T_a$ and $T_b$.\qed
\end{prop}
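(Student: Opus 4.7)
The plan is to observe that this proposition is an immediate corollary of formula \eqref{sfdimrel}, which was just recorded in the text. Indeed, the whole content of the proposition is essentially repackaged in that identity, so the work to be done is minimal; I would simply verify that the hypotheses for \eqref{sfdimrel} are satisfied and then read off the conclusion.

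More precisely, under the standing hypothesis $T_t=\mathfrak I+K_t$ with $\mathfrak I$ a fixed symmetry of $\mathcal H$ and $K_t$ a self-adjoint compact operator, both $T_a$ and $T_b$ are compact perturbations of the \emph{same} symmetry $\mathfrak I$. Consequently, the difference
\[
T_a-T_b=K_a-K_b
\]
is a compact self-adjoint operator. By Proposition~\ref{abbo}, the negative spectral subspaces $V^-(T_a)$ and $V^-(T_b)$ are then commensurable, so that the relative dimension $\Dim\bigl(V^-(T_a),V^-(T_b)\bigr)$ is a well-defined integer that manifestly depends on the operators $T_a$ and $T_b$ alone, and not on the path $t\mapsto T_t$ joining them in $\Fredsa(\mathcal H)$. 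Formula \eqref{sfdimrel} then identifies this integer with $\spfl(T,[a,b])$, which is the desired conclusion.

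The only potential obstacle is whether \eqref{sfdimrel} genuinely applies here, but this is automatic: the path $T$ lies in the affine space of compact perturbations of $\mathfrak I$, which is exactly the setting in which the identity was stated above. Since the proof reduces to this citation, no further argument is required.
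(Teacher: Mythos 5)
Your proposal is correct and is exactly the argument the paper has in mind: the proposition is stated with an immediate \qed precisely because formula \eqref{sfdimrel} expresses $\spfl(T,[a,b])$ as $\Dim\bigl(V^-(T_a),V^-(T_b)\bigr)$, a quantity depending only on the endpoints, with commensurability guaranteed by Proposition~\ref{abbo} just as you note. Nothing further is needed.
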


\subsection{Restriction to a fixed subspace}

An important property, stated in the following lemma, says that if $\mathcal V$ is a  closed subspace of $\mathcal H$ of finite codimension, then $P_{\mathcal V}\circ T_t|_\mathcal V:\mathcal V\to \mathcal V$ is a path of self-adjoint compact perturbations of a fixed symmetry of $\mathcal V$.

\ble{symmetryrestrictions}
Let $T:[a,b]\to\Fredsa(\mathcal H)$ be a continuous curve  of the form $T_t=\mathfrak I+K_t$, where $\mathfrak I$ is a symmetry of $\mathcal H$ and $K_t$ is a self-adjoint compact operator on $\mathcal H$, and consider a closed subspace $\mathcal V$ of $\mathcal H$ of finite codimension.
Call $\widetilde T:[a,b]\to\Fredsa(\mathcal V)$ the continuous curve of self-adjoint operators defined as $\widetilde T_t=P_{\mathcal V}\circ T_t|_\mathcal V$.
Then, there exist a symmetry $\mathfrak I_{\mathcal V}$ of $\mathcal V$ and a continuous path of self-adjoint
compact operators $C_t$ on $\mathcal V$ such that
\[
\wt_t=\mathfrak I_{\mathcal V}+C_t, \quad t\in[a,b].
\]
\ele

\begin{proof}
The operator $\widetilde{\mathfrak I}_{\mathcal V}=P_{\mathcal V}\circ\mathfrak I\vert_{\mathcal V}:\mathcal V\to \mathcal V$ is self-adjoint,
and its square $(\widetilde{\mathfrak I}_{\mathcal V})^2$ is easily computed as the sum of the identity of $\mathcal V$ and a finite rank operator.
Namely, the space $\mathcal W=\mathfrak I^{-1}(\mathcal V)\cap\mathcal V=\mathfrak I(\mathcal V)\cap\mathcal V$ has finite codimension in $\mathcal V$, it is invariant
by $\mathfrak I$, and $(\mathfrak I\vert_{\mathcal W})^2=I_{\mathcal W}$.
The symmetry $\mathfrak I_{\mathcal V}$ is obtained applying next Lemma to the operator $S=\widetilde{\mathfrak I}_{\mathcal V}$ on the
Hilbert space $\mathcal V$.
\end{proof}

\begin{lem}
Let $S$ be a self-adjoint operator on a Hilbert space $\mathcal G$ such that $S^2-I$ has finite rank. Then, $S$ is a finite-rank
perturbation of a symmetry $\mathfrak L$ of $\mathcal G$.
\end{lem}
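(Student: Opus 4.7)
The plan is to use the spectral decomposition of $\mathcal G$ determined by $S^2$, splitting off a finite dimensional piece on which the identity $S^2=I$ fails. Concretely, set $N=\ker(S^2-I)$. Since $S^2-I$ has finite rank, $\mathcal G/N$ is finite dimensional, so $N$ is a closed subspace of finite codimension in $\mathcal G$, and $N^\perp$ is finite dimensional.

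Next I would verify that the splitting $\mathcal G=N\oplus N^\perp$ reduces $S$. The key point is that $S$ commutes with $S^2$, so $N=\ker(S^2-I)$ is $S$-invariant; self-adjointness of $S$ then forces $N^\perp$ to be $S$-invariant as well. Thus $S=S_1\oplus S_2$ with $S_1=S|_N$ self-adjoint on $N$ satisfying $S_1^2=I_N$, and $S_2=S|_{N^\perp}$ self-adjoint on the finite dimensional space $N^\perp$. In particular $S_1$ is already a symmetry of $N$.

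Finally, pick any symmetry $\mathfrak L_0$ of the finite dimensional Hilbert space $N^\perp$ (for instance $\mathfrak L_0=I_{N^\perp}$, or the signed spectral decomposition of $S_2$), and set
\[
\mathfrak L=S_1\oplus \mathfrak L_0.
\]
Then $\mathfrak L$ is self-adjoint and $\mathfrak L^2=I$, so $\mathfrak L$ is a symmetry of $\mathcal G$. Moreover
\[
S-\mathfrak L=0\oplus(S_2-\mathfrak L_0),
\]
which is supported on the finite dimensional subspace $N^\perp$ and therefore has finite rank, as required. No step here looks like a real obstacle; the only subtlety is checking that $N$ is $S$-invariant, which is precisely what justifies splitting the problem into the ``automatic'' piece on $N$ and the trivial finite dimensional piece on $N^\perp$.
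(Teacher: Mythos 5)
Your proof is correct and follows essentially the same route as the paper: the paper decomposes $\mathcal G=\Ker(S^2-I)\oplus\mathrm{Im}(S^2-I)$ and defines $\mathfrak L$ to be $S$ on the kernel and the identity on the finite dimensional image, which is exactly your construction with the choice $\mathfrak L_0=I_{N^\perp}$. The only difference is that you spell out the $S$-invariance of $N=\Ker(S^2-I)$ and of $N^\perp$, which the paper leaves implicit.
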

\begin{proof}
$S^2-I$ is self-adjoint and it has closed image (finite dimensional), thus $\mathcal G$ is given by the orthogonal
sum of closed subspaces, that is, $\mathcal G=\Ker(S^2-I)+\mathrm{Im}(S^2-I)$. The symmetry $\mathfrak L$ is given by
\[
\mathfrak L=\begin{cases}S &\text{on $\Ker(S^2-I)$}\\
{I} & \text{on $\mathrm{Im}(S^2-I)$}.\end{cases}\qedhere\]
\end{proof}

We are now in the position to present the followin result, which concerns the difference between the spectral flow of a path of symmetric Fredholm forms
on $\mathcal H$ and the spectral flow of its restriction to a finite codimensional closed subspace of $\mathcal H$.

In the theorem $\Bsym(\mathcal H)$ will denote the set of symmetric Fredholm forms
on $\mathcal H$, while $\Fredsa(\mathcal H)$, as said before, will stand for the set of self-adjoint Fredholm operators in $\mathcal H$.

\bth{riduzionespettro}
Consider a continuous path $B:[a,b]\to\Bsym(\mathcal H)$ of symmetric Fredholm forms
on $\mathcal H$.
Let ${\mathcal V}$ be a finite codimensional closed subspace of $\mathcal H$ and denote by $T:[a,b]\to\Fredsa(\mathcal H)$ and $\widetilde T:[a,b]\to\Fredsa(\mathcal V)$ the continuous paths of self-adjoint Fredholm operators
associated with $B$ and to the restriction $B|_{\mathcal V\times\mathcal V}$, respectively.
Assume that $T_t=\mathfrak J+K_t$ for all $t\in[a,b]$, where $\mathfrak J$ is a symmetry of $\mathcal H$ and $K_t$ is compact for all $t$. Then,
\begin{equation}\label{primariduzione}
\begin{aligned}
\spfl(T,[a,b])&-\spfl(\widetilde T,[a,b])=\Dim\big(V^-(T_a),V^-(\widetilde T_a)\big)-
\Dim\big(V^-(T_b),V^-(\widetilde T_b)\big)\\
&
=\mathrm n_-\big(B_a\vert_{\mathcal V^{\perp_{B_a}}\times\mathcal V^{\perp_{B_a}}}\big)+\Dim\big(\mathcal V\cap\mathcal V^{\perp_{B_a}}\big)-\Dim\big(\mathcal V\cap\Ker B_a\big)\\
&\quad-\mathrm n_-\big(B_b\vert_{\mathcal V^{\perp_{B_b}}\times \mathcal V^{\perp_{B_b}}}\big)-\Dim\big(\mathcal V\cap\mathcal V^{\perp_{B_b}}\big)+\Dim\big(\mathcal V\cap\Ker B_b\big).
\end{aligned}
\end{equation}
\eth

\begin{proof}
By Lemma \ref{symmetryrestrictions} $\widetilde T$ is a path of compact perturbations of a symmetry of $\mathcal V$. Therefore, by formula \eqref{sfdimrel} we immediately obtain
\[
\spfl(T,[a,b])-\spfl(\widetilde T,[a,b])=\Dim\big(V^-(T_a),V^-(T_b)\big)-
\Dim\big(V^-(\widetilde T_a),V^-(\widetilde T_b)\big).
\]
Recalling that the commensurability of subspaces is an equivalence relation and applying Lemma
\ref{catena}, it follows that
\[
\begin{array}{l}
\Dim\big(V^-(T_a),V^-(T_b)\big)-
\Dim\big(V^-(\widetilde T_a),V^-(\widetilde T_b)\big)=\smallskip \\
\Dim\big(V^-(T_a),V^-(\widetilde T_a)\big)-
\Dim\big(V^-(T_b),V^-(\widetilde T_b)\big).
\end{array}
\]
The conclusion of the proof is an immediate consequence of Proposition \ref{final}.
\end{proof}

Note that $\mathcal V\cap\mathcal V^{\perp_B}=\Ker\big(B\vert_{\mathcal V\times\mathcal V}\big)$.

\subsection{Continuous and smooth families of closed subspaces}
\label{sub:contclosedsubspaces}
In Subsection \ref{subultima} below we will extend formula \ref{primariduzione} to the case when the subspace $\mathcal V$ in Theorem \ref{riduzionespettro} is not constant but depends on $t$. To this end we devote this subsection to a summary of the concept of smooth family (or smooth path) of closed
subspaces of $\mathcal H$, recalling also some crucial properties, important for our construction.
The goal is to determine the existence of a special class
of \emph{trivializations} for smooth, or continuous, curves of closed subspaces.
Most of the material discussed in this subsection is known to specialists, nevertheless it will be useful
to give a formal proof of the essential results, for the reader's convenience.

In the following definition, being $\mathrm{L}(\mathcal H)$ the space of bounded linear operators of $\mathcal H$ into itself, $\mathrm{GL}(\mathcal H)$ is the open subset of $\mathrm{L}(\mathcal H)$ of the automorphisms. The space of bounded linear operators between two Hilbert spaces $\mathcal H_1$ and $\mathcal H_2$ is denoted by $\mathrm L(\mathcal H_1,\mathcal H_2)$.

\begin{defin}\label{thm:defsmoothnessclosedsubspaces}
Let $I\subseteq\R$ be an interval and $\mathcal D=\{\mathcal V_t\}_{t\in I}$ be a family of closed subspaces of
$\mathcal H$. We say that $\mathcal D$ is a \emph{$C^k$ family of closed subspaces of $\mathcal H$}, $k=0,\ldots,\infty,\omega$\footnote{The symbol $C^\omega$ means analytic.}
if for all $t_0\in I$ there exist $\varepsilon>0$, a $C^k$ map $\Psi: I\cap\left]t_0-\varepsilon,t_0+\varepsilon\right[\to\mathrm{GL}(\mathcal H)$ and a closed subspace $\mathcal V_\star\subseteq\mathcal H$ such that
$\Psi_t(\mathcal V_t)=\mathcal V_\star$ for all $t\in I\cap\left]t_0-\varepsilon,t_0+\varepsilon\right[$.
\end{defin}

The pair $(\mathcal V_\star,\Psi)$ as above will be called a \emph{$C^k$-local trivialization} of the family $\mathcal D$ around $t_0$.
The following criterion of smoothness holds.

\begin{prop}\label{thm:produce}
Let $I\subseteq\R$ be an interval, $\mathcal H_1$, $\mathcal H_2$ be Hilbert
spaces and $F:I\mapsto\mathrm L(\mathcal H_1,\mathcal H_2)$ be a $C^k$ map, $k=0,1,\ldots,\infty,\omega$,
such that each $F_t$ is surjective. Then, the family of $\mathcal V_t=\ker F_t$
is a $C^k$-family  of closed subspaces of $\mathcal H_1$.
\end{prop}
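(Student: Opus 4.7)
The plan is to construct, near each $t_0\in I$, a $C^k$ trivialization by means of an oblique projection onto the moving kernel. First I would fix $t_0\in I$ and consider the decomposition $\mathcal H_1=\ker F_{t_0}\oplus(\ker F_{t_0})\pp$; since $F_{t_0}$ is surjective with closed kernel, the open mapping theorem gives that the restriction $F_{t_0}|_{(\ker F_{t_0})\pp}:(\ker F_{t_0})\pp\to\mathcal H_2$ is a linear isomorphism. By openness of $\mathrm{GL}$ inside $\mathrm L((\ker F_{t_0})\pp,\mathcal H_2)$ and continuity of $F$, there will exist $\varepsilon>0$ such that $F_t|_{(\ker F_{t_0})\pp}$ remains an isomorphism onto $\mathcal H_2$ for all $t\in I$ with $|t-t_0|<\varepsilon$. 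Setting $A_t:=\bigl(F_t|_{(\ker F_{t_0})\pp}\bigr)^{-1}\in\mathrm L(\mathcal H_2,\mathcal H_1)$, I would use that operator inversion is real-analytic on $\mathrm{GL}$ (via the Neumann series) to conclude that $t\mapsto A_t$ is $C^k$, uniformly across $k=0,\ldots,\infty,\omega$.

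The next step is to introduce $\Pi_t:=I-A_t\circ F_t\in\mathrm L(\mathcal H_1)$. Since $F_tA_t$ is the identity of $\mathcal H_2$ and $\mathrm{Im}\,A_t\subseteq(\ker F_{t_0})\pp$, a direct computation will show that $\Pi_t$ is an idempotent with range $\ker F_t$ and kernel $(\ker F_{t_0})\pp$, so in particular $\Pi_{t_0}=P_{\ker F_{t_0}}$ and $t\mapsto\Pi_t$ is $C^k$. Then I would define
\[
\Psi_t:=\Pi_{t_0}\Pi_t+(I-\Pi_{t_0})(I-\Pi_t),
\]
which is $C^k$ in $t$ and satisfies $\Psi_{t_0}=\Pi_{t_0}+(I-\Pi_{t_0})=I$, so after possibly shrinking $\varepsilon$ one has $\Psi_t\in\mathrm{GL}(\mathcal H_1)$ throughout the chosen neighborhood.

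Finally, I would verify the trivialization property by a two-line check: for $x\in\ker F_t$ one has $\Pi_tx=x$, hence $\Psi_tx=\Pi_{t_0}x\in\ker F_{t_0}$; for $x\in(\ker F_{t_0})\pp$ both $\Pi_{t_0}x=0$ and $\Pi_tx=0$, so $\Psi_tx=x$. Combining the decomposition $\mathcal H_1=\ker F_t\oplus(\ker F_{t_0})\pp$ induced by $\Pi_t$ with the decomposition $\mathcal H_1=\ker F_{t_0}\oplus(\ker F_{t_0})\pp$, the fact that $\Psi_t$ is a bijection of $\mathcal H_1$ acting as the identity on $(\ker F_{t_0})\pp$ and sending $\ker F_t$ into $\ker F_{t_0}$ forces $\Psi_t(\ker F_t)=\ker F_{t_0}$. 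Taking $\mathcal V_\star:=\ker F_{t_0}$, the pair $(\mathcal V_\star,\Psi)$ is then the required $C^k$ local trivialization. The main point that needs some care is the analytic case $k=\omega$, since there $C^k$ dependence of $A_t$ must be deduced from real-analyticity of operator inversion rather than from a differentiable calculus; once this is noted, the remaining algebraic manipulations defining $\Pi_t$ and $\Psi_t$ are polynomial in the input and preserve analyticity, so the argument goes through uniformly for all $k$.
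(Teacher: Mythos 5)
Your proof is correct. Note that the paper itself gives no argument for this proposition: its ``proof'' is a citation to \cite[Lemma~2.9]{asian}, so your write-up supplies a self-contained argument at a point where the paper defers to the literature. Your route is the standard one for such statements: use surjectivity of $F_{t_0}$ and the open mapping theorem to see that $F_{t_0}\vert_{(\Ker F_{t_0})^\perp}$ is an isomorphism onto $\mathcal H_2$, propagate this to nearby $t$ by openness of the invertibles, and let $A_t$ be the resulting $C^k$ family of right inverses; then $\Pi_t=I-A_tF_t$ is a $C^k$ family of idempotents with image $\Ker F_t$ and kernel $(\Ker F_{t_0})^\perp$, and the operator $\Psi_t=\Pi_{t_0}\Pi_t+(I-\Pi_{t_0})(I-\Pi_t)$, being the identity at $t_0$, is invertible nearby and carries $\Ker F_t$ onto $\Ker F_{t_0}$. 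All the individual steps check out: $F_tA_t=I_{\mathcal H_2}$ gives idempotency of $\Pi_t$ and the identification of its range and kernel; smoothness (including the analytic case) follows since inversion is locally given by a Neumann series and the remaining operations are continuous bilinear; and the final equality $\Psi_t(\Ker F_t)=\Ker F_{t_0}$, which you state as ``forced,'' is indeed a one-line verification: $\mathcal H_1=\Psi_t(\mathcal H_1)=\Psi_t(\Ker F_t)+(\Ker F_{t_0})^\perp$ with $\Psi_t(\Ker F_t)\subseteq\Ker F_{t_0}$, so any $y\in\Ker F_{t_0}$ has its $(\Ker F_{t_0})^\perp$-component in $\Ker F_{t_0}\cap(\Ker F_{t_0})^\perp=\{0\}$. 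Compared with the approach of the cited lemma, which trivializes via the map $x\mapsto(P_{\Ker F_{t_0}}x,F_tx)$ into the product $\Ker F_{t_0}\oplus\mathcal H_2$, your construction has the small advantage of producing directly an element of $\mathrm{GL}(\mathcal H_1)$ fixing $(\Ker F_{t_0})^\perp$ pointwise, i.e.\ a local \emph{splitting} trivialization in the sense of Definition~\ref{splitriv}, which is exactly the kind of trivialization the paper later needs.
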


\begin{proof}
See for instance \cite[Lemma~2.9]{asian}.
\end{proof}

Let $\mathcal D=\{\mathcal V_t\}_{t\in I}$ be a family of closed subspaces of $\mathcal H$.
Proposition \ref{thm:smoothnessequivalent} below relates the smoothness of $\mathcal D$  with the
smoothness of the path  $t\mapsto P_{\mathcal V_t}$ of the orthogonal projections onto $\mathcal V_t$, for $t\in I$. Any $P_{\mathcal V_t}$ is considered having $\mathcal H$ as target space. We need first two preliminary lemmas.

\begin{lem}\label{thm:lemprojprox}
Let $P,Q$ be two projections such that $\Vert P-Q\Vert<1$.
Then, the restriction $P^{\Im Q}_{\Im P}:\Im Q\to\Im P$ is an isomorphism.
\end{lem}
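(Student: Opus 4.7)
The plan is to establish injectivity and surjectivity of the restriction $P\vert_{\Im Q}:\Im Q\to\Im P$ separately (where, consistently with the paper's conventions, I take $P$ and $Q$ to be orthogonal, so that $\Vert I-Q\Vert\le1$).

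For injectivity I would argue directly: if $x\in\Im Q$ satisfies $Px=0$, then $Qx=x$, so $x=(Q-P)x$ and therefore $\Vert x\Vert\le\Vert Q-P\Vert\,\Vert x\Vert<\Vert x\Vert$ unless $x=0$. This forces $x=0$, showing that $P\vert_{\Im Q}$ has trivial kernel.

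For surjectivity, the key observation I would use is the algebraic identity
\[
P-PQ=P(I-Q)=(P-Q)(I-Q),
\]
where the second equality comes from $Q(I-Q)=0$. Since $\Vert I-Q\Vert\le1$, this gives $\Vert P-PQ\Vert\le\Vert P-Q\Vert<1$. Restricting to $\Im P$, the operator $PQ\vert_{\Im P}:\Im P\to\Im P$ differs from the identity of $\Im P$ by an operator of norm strictly less than $1$, hence is invertible by a Neumann series. Consequently, for any $y\in\Im P$ there exists $y'\in\Im P$ with $PQy'=y$; setting $x:=Qy'\in\Im Q$ yields $Px=y$, which proves that $P\vert_{\Im Q}$ maps onto $\Im P$.

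Putting the two pieces together, $P\vert_{\Im Q}$ is a continuous bijection between the Banach spaces $\Im Q$ and $\Im P$, so it is a topological isomorphism by the open mapping theorem (a bounded inverse can also be read off explicitly from the Neumann series above). The main — essentially the only — nontrivial step is recognizing the identity $P(I-Q)=(P-Q)(I-Q)$; once that is in hand, both injectivity and the Neumann-series bound for surjectivity are immediate.
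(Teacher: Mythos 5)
Your argument is correct, and the injectivity half is essentially the paper's own (the paper phrases it as: $Px=0$ and $Qx=x$ force $\Vert Px-Qx\Vert=\Vert x\Vert$, contradicting $\Vert P-Q\Vert<1$). For surjectivity you take a genuinely different route: you use the identity $P-PQ=(P-Q)(I-Q)$ together with $\Vert I-Q\Vert\le 1$ to see that $PQ\vert_{\Im P}$ is a Neumann-series-invertible perturbation of the identity of $\Im P$, whereas the paper factors $PQ=P(Q+I-P)$ and observes that $I+Q-P$ is invertible on all of $\mathcal H$ because $\Vert P-Q\Vert<1$, so that $\Im(PQ)=P\big((I+Q-P)(\mathcal H)\big)=\Im P$. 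The trade-off is mild but worth noting: the paper's factorization uses no norm bound on $P$ or $I-Q$, so its proof covers arbitrary bounded projections exactly as the lemma is stated, while your bound $\Vert(P-Q)(I-Q)\Vert\le\Vert P-Q\Vert$ needs $\Vert I-Q\Vert\le1$, i.e.\ the orthogonality (or at least norm-one) hypothesis you declare at the outset; since the lemma is only applied in Proposition~\ref{thm:smoothnessequivalent} to orthogonal projections, this restriction is harmless. On the other hand, your version gives an explicit bounded inverse via the Neumann series, making the final appeal to the open mapping theorem dispensable, whereas the paper leaves the isomorphism conclusion implicit after injectivity and surjectivity.
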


\begin{proof}
Assume $x\in\Im Q\setminus\{0\}$ and $Px=0$; then $\Vert Px-Qx\Vert=\Vert Qx\Vert=\Vert x\Vert$, which implies
$\Vert P-Q\Vert\ge1$. Thus, $P^{\Im Q}_{\Im P}$ is injective. We now need to show that $\Im P^{\Im Q}_{\Im P}$
is equal to $\Im P$; to this aim, it suffices to show that $\Im(PQ)=\Im P$.
This follows easily from the equality
\[
PQ=P(Q+ I-P),
\]
observing that, since $\Vert P-Q\Vert<1$, then $ I+Q-P$ is an isomorphism of $\mathcal H$.
\end{proof}

\begin{lem}\label{thm:lemmaproiezionegrafo}
Let $\mathcal H_0$ and $\mathcal H_1$ be Hilbert spaces, and let $L:\mathcal H_0\to\mathcal H_1$ be a bounded linear
operator. Set $\mathcal H=\mathcal H_0\oplus\mathcal H_1$; then, the orthogonal projection
$P_{\Gr(L)}$ onto the graph of $L$ is given by

\begin{equation}\label{eq:exprorthprojgraph}
\begin{aligned}
& P_{\Gr(L)}(x,y)= & \\ & \big(x+L^*(I+LL^*)^{-1}(y-Lx), L(x+L^*( I+LL^*)^{-1}(y-Lx))\big)= &\\
& \big(x+L^*( I+LL^*)^{-1}(y-Lx), y-( I+LL^*)^{-1}(y-Lx)\big). &
\end{aligned}
\eeq
\end{lem}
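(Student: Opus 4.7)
The plan is to verify the explicit formula by checking directly that (i) the proposed image lies in $\Gr(L)$, (ii) the residual vector lies in $\Gr(L)^\perp$, and then invoke uniqueness of the orthogonal projection. The two expressions on the right-hand side of \eqref{eq:exprorthprojgraph} will be reconciled along the way through a single algebraic identity involving $(I+LL^*)^{-1}$.

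First I would note that $LL^*$ is a bounded, positive, self-adjoint operator on $\mathcal H_1$, so $I+LL^*$ is self-adjoint with spectrum contained in $[1,+\infty[$ and hence invertible with bounded inverse. This legitimizes every appearance of $(I+LL^*)^{-1}$ in the formula. Set
\[
w := (I+LL^*)^{-1}(y-Lx)\in\mathcal H_1,\qquad u := x+L^*w\in\mathcal H_0.
\]
The candidate for $P_{\Gr(L)}(x,y)$ is then $(u,Lu)$, which visibly lies in $\Gr(L)$.

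Next I would reconcile the two presentations of the second component. The crucial identity is $LL^*(I+LL^*)^{-1}=I-(I+LL^*)^{-1}$, obtained by writing $LL^*=(I+LL^*)-I$ and multiplying on the right by $(I+LL^*)^{-1}$. Applying this to $y-Lx$ gives
\[
Lu=Lx+LL^*w=Lx+(y-Lx)-w=y-w,
\]
which matches the second expression in \eqref{eq:exprorthprojgraph}.

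Finally I would check orthogonality of the residual $(x,y)-(u,Lu)=(-L^*w,\,w)$ to $\Gr(L)$: for every $v\in\mathcal H_0$,
\[
\bigl\langle(-L^*w,w),(v,Lv)\bigr\rangle_{\mathcal H}=-\langle L^*w,v\rangle_{\mathcal H_0}+\langle w,Lv\rangle_{\mathcal H_1}=0,
\]
by the very definition of the adjoint. Together with the fact that $(u,Lu)\in\Gr(L)$ (which is closed because $L$ is bounded), this forces $(u,Lu)=P_{\Gr(L)}(x,y)$ by uniqueness of the orthogonal decomposition $\mathcal H=\Gr(L)\oplus\Gr(L)^\perp$. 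No step presents a real obstacle; the only mildly delicate point is remembering to invoke the identity $LL^*(I+LL^*)^{-1}=I-(I+LL^*)^{-1}$ in order to exhibit the equivalence of the two displayed forms of the projection.
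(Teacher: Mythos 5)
Your argument is correct and is essentially the paper's own proof written out in full: the paper's "straightforward calculation" rests precisely on the observation that $\Gr(L)^\perp=\{(-L^*b,b):b\in\mathcal H_1\}$, and your residual $(-L^*w,w)$ exhibits exactly this form, with the identity $LL^*(I+LL^*)^{-1}=I-(I+LL^*)^{-1}$ reconciling the two displayed expressions. Nothing is missing.
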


\begin{proof}
It follows by a straightforward calculation, keeping in mind that the orthogonal complement of $\Gr(L)$ in $\mathcal H$
is $\big\{(-L^*b,b):b\in \mathcal H_1\big\}$.
\end{proof}

Formula \eqref{eq:exprorthprojgraph} shows that the orthogonal projection onto the graph of $L$ is written as a smooth
function of $L$. We are now ready for the main result of the subsection.

\begin{prop}\label{thm:smoothnessequivalent}
Let $J\subseteq\R$ be an interval, and let $\mathcal D=\{\mathcal V_t\}_{t\in J}$ be a family of closed
subspaces of $\mathcal H$. Then, for all $k=0,1,\ldots,\infty,\omega$, $\mathcal D$ is a $C^k$-family of subspaces of $\mathcal H$ if and only if the map
$t\mapsto P_{\mathcal V_t}$, from $J$ into $\in\mathrm{L}(\mathcal H)$,
is of class $C^k$.
\end{prop}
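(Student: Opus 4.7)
The plan is to prove each implication locally around a fixed $t_0\in J$ via explicit operator formulas. For the forward direction I would start with a $C^k$-local trivialization $(\mathcal V_\star,\Psi)$ around $t_0$, use the fact that inversion in $\mathrm{GL}(\mathcal H)$ is analytic to conclude that $t\mapsto\Psi_t^{-1}$ is of class $C^k$, and introduce the $C^k$ family of injective bounded operators
\[
A_t:=\Psi_t^{-1}\big|_{\mathcal V_\star}\in\mathrm L(\mathcal V_\star,\mathcal H),
\]
whose images are precisely the subspaces $\mathcal V_t=\Psi_t^{-1}(\mathcal V_\star)$. Since $A_t$ is injective with closed range, the estimate $\langle A_t^*A_tx,x\rangle=\|A_tx\|^2\ge c_t\|x\|^2$ forces $A_t^*A_t\in\mathrm L(\mathcal V_\star)$ to be positive and invertible, and it of course depends $C^k$-ly on $t$. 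A routine verification shows that the self-adjoint idempotent $A_t(A_t^*A_t)^{-1}A_t^*$ has image $\mathcal V_t$, hence coincides with $P_{\mathcal V_t}$; smoothness of inversion on the open set of invertible operators then yields $t\mapsto P_{\mathcal V_t}\in C^k$.

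For the converse I would fix $t_0\in J$ and choose $\varepsilon>0$ small enough that $\|P_t-P_{t_0}\|<1$ for $|t-t_0|<\varepsilon$, where $P_t:=P_{\mathcal V_t}$, and then introduce Kato's intertwining operator
\[
R_t:=P_tP_{t_0}+(I-P_t)(I-P_{t_0}).
\]
This $R_t$ is visibly $C^k$, satisfies $R_{t_0}=I$, and therefore lies in $\mathrm{GL}(\mathcal H)$ on a (possibly smaller) neighborhood of $t_0$. A direct algebraic check shows $R_t(\mathcal V_{t_0})\subseteq\mathcal V_t$ and $R_t(\mathcal V_{t_0}^\perp)\subseteq\mathcal V_t^\perp$, while the restriction $R_t|_{\mathcal V_{t_0}}$ coincides with $P_t|_{\mathcal V_{t_0}}$, which is an isomorphism onto $\mathcal V_t$ by Lemma~\ref{thm:lemprojprox}; hence $R_t(\mathcal V_{t_0})=\mathcal V_t$. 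Setting $\Psi_t:=R_t^{-1}$ and $\mathcal V_\star:=\mathcal V_{t_0}$ produces the required $C^k$-local trivialization.

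The main difficulty is not computational but rather the identification of the correct explicit formulas: in the forward direction, $A_t(A_t^*A_t)^{-1}A_t^*$ converts any $C^k$ family of injective bounded operators with prescribed images into the corresponding orthogonal projections smoothly; in the reverse direction, $R_t$ manufactures from the orthogonal projections alone a $C^k$ family of automorphisms of $\mathcal H$ that intertwines $\mathcal V_{t_0}$ with $\mathcal V_t$. The operator-theoretic subtleties to be tracked are the invertibility of $A_t^*A_t$ in one direction and of $R_t$ near $t_0$ in the other, and once these are in hand the remaining verifications are routine algebraic manipulations with projections and adjoints.
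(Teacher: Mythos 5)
Your proof is correct, but in both directions it follows a genuinely different track from the paper's argument. For the direction in which the family $\mathcal D$ is assumed to be of class $C^k$, the paper normalizes the local trivialization so that $\phi^{11}_{t_0}$ is the identity, represents $\mathcal V_t$ as the graph of $L_t=\phi^{21}_t\circ(\phi^{11}_t)^{-1}$ and then invokes the explicit graph--projection formula of Lemma~\ref{thm:lemmaproiezionegrafo}; you instead use the closed--range formula $P_{\mathcal V_t}=A_t(A_t^*A_t)^{-1}A_t^*$ with $A_t=\Psi_t^{-1}\vert_{\mathcal V_\star}$, which needs no block decomposition and no normalization of $\Psi_{t_0}$, only the lower bound $\Vert A_tx\Vert\ge\Vert\Psi_t\Vert^{-1}\Vert x\Vert$ guaranteeing that $A_t^*A_t$ is invertible. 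For the direction in which $t\mapsto P_{\mathcal V_t}$ is assumed $C^k$, the paper feeds the surjective family $F_t=Q_{t_0}Q_t$ (with $Q_t=I-P_{\mathcal V_t}$) into the criterion of Proposition~\ref{thm:produce}, whose proof is delegated to an external reference, whereas you build the trivialization directly through the Kato-type intertwiner $R_t=P_tP_{t_0}+(I-P_t)(I-P_{t_0})$, using Lemma~\ref{thm:lemprojprox} only to see that $R_t(\mathcal V_{t_0})=\mathcal V_t$; this keeps the whole proof self-contained, and since $R_t$ is invertible near $t_0$ and also carries $\mathcal V_{t_0}^\perp$ into $\mathcal V_t^\perp$, the trivialization $\Psi_t=R_t^{-1}$ you obtain is automatically a local \emph{splitting} trivialization in the sense of Definition~\ref{splitriv}, a slightly stronger local conclusion that the paper extracts only afterwards in Remark~\ref{trivializzazionediagonale}. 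In short, the paper's route buys brevity by reusing Proposition~\ref{thm:produce} and the graph formula, while yours buys explicitness and the splitting property at the modest cost of verifying by hand the algebra of $A_t(A_t^*A_t)^{-1}A_t^*$ and of $R_t$.
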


\begin{proof}
Assume that $t\mapsto P_{\mathcal V_t}$ is of class $C^k$; set $Q_t=  I-P_{\mathcal V_t}$, so that $\mathcal V_t=\Ker Q_t$ for
all $t$. Fix $t_0\in J$, for $t\in J$ near $t_0$, by continuity we can assume $\Vert Q_t-Q_{t_0}\Vert<1$.
We claim that, for $t\in J$ near $t_0$, the map $F_t=Q_{t_0}Q_t:\mathcal H\to\Im Q_{t_0}$ is surjective;
namely, $\Im F_t=\mathrm{Im}\big(Q_{t_0}\vert_{\Im Q_t}\big)$, and the claim follows
from Lemma~\ref{thm:lemprojprox}. Moreover, $\Ker F_t=\Ker Q_t$ because, by Lemma~\ref{thm:lemprojprox},
$Q_{t_0}\vert_{\Im Q_t}$ is injective. Since $t\mapsto F_t$ is of class $C^k$,
$\mathcal D$ is a $C^k$-family of closed subspaces of $\mathcal H$ by Proposition~\ref{thm:produce}.

For the converse, we will show that the projections $P_{\mathcal V_t}$ can be written as smooth functions
of a local trivialization.
Assume $\mathcal D$ of class $C^k$; choose $t_0\in J$, and let
$(\mathcal V_\star,\Psi)$ be a local trivialization  of $\mathcal D$ around $t_0$; set $\phi_t=\Psi_t^{-1}$.
Up to replacing $\Psi_t$ with $\Psi_{t_0}^{-1}\Psi_t$, we can assume $\mathcal V_\star=\mathcal V_{t_0}$ and
$\mathcal V_t=\phi_t(\mathcal V_{t_0})$ for all $t$ near $t_0$. Write $\mathcal H=\mathcal V_{t_0}\oplus\mathcal V_{t_0}^\perp$
and write $\phi_t$ in blocks relatively to this decomposition of $\mathcal H$ as:
\[
\phi_t=\begin{pmatrix}\phi^{11}_t&\phi^{12}_t\\\phi^{21}_t&\phi^{22}_t \end{pmatrix};
\]
observe that the smoothness of $\Psi_t$ is equivalent to the smoothness of the blocks $\phi^{ij}_t$.
Since $\phi_{t_0}\vert_{\mathcal V_{t_0}}$ is the identity on $\mathcal V_{t_0}$,
$\phi^{11}_{t_0}$ is the identity, and by continuity, $\phi^{11}_t$ is invertible for $t$ near
$t_0$. An immediate computation shows that, setting $L_t:\mathcal V_{t_0}\to\mathcal V_{t_0}^\perp$,
\[L_t=\phi^{21}_t\circ(\phi^{11}_t)^{-1},\]
then $\mathcal V_t=\Gr(L_t)$. Using Lemma~\ref{thm:lemmaproiezionegrafo}, the projection
$P_{\mathcal V_t}$ onto $\mathcal V_t$ can be written as a smooth function of $\phi_t$, which proves
that $t\mapsto P_{\mathcal V_t}$ is of class $C^k$.
\end{proof}





\bre{trivializzazionediagonale}
The above proposition  tells us that, given a $C^k$ family $\mathcal D=\{\mathcal V_t\}_{t\in[a,b]}$ of closed subspaces of $\mathcal H$, there exists, for any $t_0\in [a,b]$, a local trivialization $(\mathcal V_\star,\Psi)$ of $\mathcal D$ around $ t_0$ such that
$\Psi_t(\mathcal V_t^\perp)=\mathcal V_\star^\perp$
for all $t$ in a neighborhood $I$ of $ t_0$.
\ere

\bde{splitriv}
A local trivialization $(\mathcal V_\star,\Psi)$ of $\mathcal D$ around $ t_0$ is called a \emph{local splitting trivialization} if $\Psi_t (\mathcal V_t^\perp)=\mathcal V_\star^\perp$.
\ede

Actually,
 as an immediate consequence of Corollary \ref{thm:esistenzatrivializzazioniortogonali}, we obtain the following \emph{global} result, that is, the existence of a global splitting trivialization of isometries.
\begin{prop}
Given a $C^k$ family $\mathcal D=\{\mathcal V_t\}_{t\in[a,b]}$ of closed subspaces of $\mathcal H$, there exists a global trivialization $(\mathcal V_\star,\Psi)$ of $\mathcal D$ such that
$\Psi_t\in \mathrm O(\mathcal H)$ for all $t \in [a,b]$.
\end{prop}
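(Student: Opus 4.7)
The plan is to take $\mathcal V_\star = \mathcal V_a$ and build a $C^k$ curve $\Psi : [a,b] \to \mathrm{O}(\mathcal H)$ with $\Psi_t(\mathcal V_t) = \mathcal V_a$ for every $t$. The essential tool is Proposition~\ref{thm:smoothnessequivalent}, which translates the hypothesis into the fact that $t \mapsto P_t := P_{\mathcal V_t}$ is a $C^k$ curve in $\mathrm{L}(\mathcal H)$. I would handle the regular case $k \geq 1$ via Kato's transport ODE, and treat $k = 0$ separately by gluing local splitting trivializations (which exist by Remark~\ref{trivializzazionediagonale}) and then orthogonalizing via polar decomposition.

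For $k \geq 1$: from $P_t^2 = P_t$ one deduces $\dot P_t = \dot P_t P_t + P_t \dot P_t$ and, sandwiching once more, $P_t \dot P_t P_t = 0$. Set $A_t := [\dot P_t, P_t]$; it is skew-adjoint since $P_t$ and $\dot P_t$ are self-adjoint, and a short calculation using the two identities above yields $[A_t,P_t] = \dot P_t$. Let $\Phi_t$ be the unique $C^k$ solution of the linear Cauchy problem
\[
\dot\Phi_t = A_t\, \Phi_t, \qquad \Phi_a = I,
\]
on the Banach space $\mathrm{L}(\mathcal H)$. Because $A_t^* = -A_t$, the derivative of $\Phi_t^* \Phi_t$ vanishes, so $\Phi_t \in \mathrm{O}(\mathcal H)$ for all $t$. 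Moreover, $X_t := \Phi_t P_a - P_t \Phi_t$ satisfies $X_a = 0$ and $\dot X_t = A_t X_t$ (this is where $[A_t,P_t] = \dot P_t$ is used), hence $X_t \equiv 0$, so $\Phi_t(\mathcal V_a) = \mathcal V_t$ and, by orthogonality of $\Phi_t$, also $\Phi_t(\mathcal V_a^\perp) = \mathcal V_t^\perp$. Setting $\Psi_t := \Phi_t^{-1} = \Phi_t^*$ gives the desired orthogonal global trivialization.

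For $k = 0$: by compactness, pick a partition $a = t_0 < t_1 < \cdots < t_N = b$ such that on each $[t_{i-1}, t_i]$ there is a continuous local splitting trivialization $\Phi^{(i)}$. Glue these inductively: on $[t_i, t_{i+1}]$ replace $\Phi^{(i+1)}_t$ by $T_i\, \Phi^{(i+1)}_t$, where $T_i := \Phi^{(i)}_{t_i} \bigl(\Phi^{(i+1)}_{t_i}\bigr)^{-1}$ is a constant automorphism that preserves both $\mathcal V_\star$ and $\mathcal V_\star^\perp$. This yields a continuous global splitting trivialization $\widetilde\Psi_t \in \mathrm{GL}(\mathcal H)$. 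Since $\widetilde\Psi_t^* \widetilde\Psi_t$ preserves the orthogonal splitting $\mathcal V_t \oplus \mathcal V_t^\perp$, so does its positive square root $|\widetilde\Psi_t|$; hence the unitary factor $U_t$ in the polar decomposition $\widetilde\Psi_t = U_t |\widetilde\Psi_t|$ is a continuous path in $\mathrm{O}(\mathcal H)$ with $U_t(\mathcal V_t) = \mathcal V_\star$ and $U_t(\mathcal V_t^\perp) = \mathcal V_\star^\perp$.

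The main obstacle is the $k = 0$ case: the Kato ODE is unavailable, and one must argue by patching. The two delicate points are (a) continuity at each junction $t_i$, which is arranged by the constant corrections $T_i$, and (b) continuity of the polar factor, which reduces to continuity of the square root on the positive invertible operators in the norm topology. In the smooth range $k \geq 1$ the Kato transport sidesteps all this bookkeeping at the cost of requiring at least one derivative; this is essentially the dichotomy packaged by the appendix result~\ref{thm:esistenzatrivializzazioniortogonali}.
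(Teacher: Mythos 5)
Your argument is correct, but it takes a genuinely different route from the paper. The paper obtains the statement as an immediate corollary of its Appendix: the map $\beta_{\mathcal W}(U)=U(\mathcal W)$ from $\mathrm O(\mathcal H)$ onto the connected component of the Grassmannian is a real-analytic fibration (Lemma~\ref{thm:actsubmfibr} plus the explicit computation of $\mathrm d\beta_{\mathcal W}(1)$ on $\mathrm L_{\mathrm{as}}(\mathcal H)$), so the $C^k$ curve $t\mapsto\mathcal V_t$ simply lifts to a $C^k$ curve in $\mathrm O(\mathcal H)$ with prescribed initial value (Corollary~\ref{thm:esistenzatrivializzazioniortogonali}); this treats all regularity classes $k=0,\dots,\infty,\omega$ at once. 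You instead bypass the Grassmannian geometry: for $k\ge1$ you run Kato's transport equation $\dot\Phi_t=[\dot P_t,P_t]\Phi_t$, whose algebra ($A_t^*=-A_t$, $[A_t,P_t]=\dot P_t$, $P_t\dot P_tP_t=0$) you verify correctly, getting an orthogonal $\Phi_t$ intertwining $P_a$ and $P_t$ (note that orthogonality uses, besides $\Phi_t^*\Phi_t=I$, the invertibility of the fundamental solution, and for $k=\omega$ one should invoke analyticity of solutions of linear ODEs with analytic coefficients); for $k=0$ you glue local splitting trivializations and orthogonalize by polar decomposition, which is sound --- the only imprecision is the claim that $T_i$ ``preserves $\mathcal V_\star$'': distinct local trivializations may have distinct model spaces, so $T_i$ rather carries one model onto the other, but the inductive gluing still produces a global splitting trivialization onto a single model space, and the square root of $\widetilde\Psi_t^*\widetilde\Psi_t$ indeed commutes with $P_{\mathcal V_t}$, so the unitary polar factor does what you say. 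What the paper's approach buys is uniformity in $k$, a prescribed value at $t=a$, and reusability of the fibration statement elsewhere; what your approach buys is a more elementary, self-contained proof that needs neither the Banach-manifold structure of $\mathrm{Gr}(\mathcal H)$ nor the submersion/fibration lemma, at the cost of the case split and the $k=0$ bookkeeping.
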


\subsection{Spectral flow and restrictions to a continuous family of subspaces}\label{subultima}
The additivity by concatenation of paths and invariance by cogredience properties, recalled in Subsection \ref{sectionsf},
allow us to extend the definition of spectral
flow to the case of paths of Fredholm operators with varying domains.

Assume that $[a,b]\ni t\mapsto T_t$ is a continuous map of bounded operators on $\mathcal H$ and
$\mathcal D=\{\mathcal V_t\}_{t\in[a,b]}$ is a continuous family of closed subspaces such that,
taking the orthogonal projection $P_{\mathcal V_{t}}$ as a map with target space ${\mathcal V_{t}}$ for every $t\in[a,b]$, the operator
$P_{\mathcal V_{t}}\circ T_t\vert_{\mathcal V_t}:\mathcal V_t\to\mathcal V_t$ is Fredholm and self-adjoint.
Let  $(\mathcal V_\star,\Psi)$
be a  trivialization of $\mathcal D$, and denote by $P_\star$ the orthogonal projection onto $\mathcal V_\star$. Then,  we have a continuous family $[a,b]\ni t\mapsto \widetilde T_t\in\Fredsa(\mathcal V_\star)$
of self-adjoint Fredholm operators on $\mathcal V_\star$, obtained by setting\footnote{%
$\big(\Psi_t\big\vert_{\mathcal V_t}\big)^*=P_{\mathcal V_t}\Psi_t^*\big\vert_{\mathcal V_\star}$.}
\begin{equation}
\label{eq:defwidetildeTt}
\widetilde T_t=P_\star\circ\big(\Psi_t\big\vert_{\mathcal V_t}\big) \circ P_{\mathcal V_{t}}\circ T_t\circ\big(\Psi_t\big\vert_{\mathcal V_t}\big)^*:\mathcal V_\star\longrightarrow
\mathcal V_\star.
\end{equation}

We define the spectral flow $\spfl(T,\mathcal D;[a,b])$ of the path $T=(T_t)_{t\in[a,b]}$ restricted to the varying domains $\mathcal D=(\mathcal V_t)_{t\in[a,b]}$ by
\begin{equation}\label{eq:defspflvaryngdom}
\spfl\big(T,\mathcal D;[a,b]\big)=\spfl\big(\widetilde T,[a,b]\big).
\end{equation}

In order to prove that this is a valid definition, one needs the following lemma.

\begin{lem}
The right hand side of equality \eqref{eq:defspflvaryngdom} does not depend on the choice of a
trivialization of the family $\mathcal D$.
\end{lem}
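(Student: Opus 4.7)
The plan is to show that changing the trivialization amounts to a cogredience on the path $\widetilde{T}$, and then to invoke the cogredience invariance of the spectral flow recalled in Subsection~\ref{sectionsf}. Let $(\mathcal{V}^1_\star,\Psi^1)$ and $(\mathcal{V}^2_\star,\Psi^2)$ be two trivializations of $\mathcal{D}$, and let $\widetilde{T}^1$ and $\widetilde{T}^2$ be the corresponding paths defined by \eqref{eq:defwidetildeTt} on $\mathcal{V}^1_\star$ and $\mathcal{V}^2_\star$, respectively. The first step is to rewrite \eqref{eq:defwidetildeTt} intrinsically. Setting
$\alpha^i_t=\Psi^i_t|_{\mathcal{V}_t}\colon\mathcal{V}_t\to\mathcal{V}^i_\star$,
which is a Hilbert space isomorphism depending continuously on $t$, and
$F_t=P_{\mathcal{V}_t}\circ T_t|_{\mathcal{V}_t}\colon\mathcal{V}_t\to\mathcal{V}_t$,
which does not involve the trivialization at all, I would check the identity
\[
\widetilde{T}^i_t=\alpha^i_t\circ F_t\circ(\alpha^i_t)^{\ast},
\]
where $(\alpha^i_t)^{\ast}\colon\mathcal{V}^i_\star\to\mathcal{V}_t$ is the Hilbert space adjoint of $\alpha^i_t$. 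By the footnote to \eqref{eq:defwidetildeTt}, this adjoint is exactly the restriction to $\mathcal{V}^i_\star$ of the operator $(\Psi^i_t|_{\mathcal{V}_t})^{\ast}$ appearing in the definition, and the left projection $P^i_\star$ in \eqref{eq:defwidetildeTt} is redundant because $\alpha^i_t$ already takes values in $\mathcal{V}^i_\star$.

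Next I would introduce the transition isomorphism
\[
S_t=\alpha^2_t\circ(\alpha^1_t)^{-1}=\Psi^2_t\circ(\Psi^1_t)^{-1}\big|_{\mathcal{V}^1_\star}\colon\mathcal{V}^1_\star\longrightarrow\mathcal{V}^2_\star,
\]
which defines a continuous path in $\mathrm{Iso}(\mathcal{V}^1_\star,\mathcal{V}^2_\star)$, since $t\mapsto\Psi^i_t\in\mathrm{GL}(\mathcal{H})$ is continuous and inversion is continuous on $\mathrm{GL}(\mathcal{H})$. From $\alpha^2_t=S_t\circ\alpha^1_t$, and therefore $(\alpha^2_t)^{\ast}=(\alpha^1_t)^{\ast}\circ S_t^{\ast}$, the intrinsic formula above yields
\[
\widetilde{T}^2_t=S_t\circ\widetilde{T}^1_t\circ S_t^{\ast}.
\]

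To conclude, I would set $R_t=(S_t^{\ast})^{-1}\in\mathrm{Iso}(\mathcal{V}^1_\star,\mathcal{V}^2_\star)$; since $R_t^{\ast}=S_t^{-1}$, the previous identity rewrites as $R_t^{\ast}\,\widetilde{T}^2_t\,R_t=\widetilde{T}^1_t$, and the cogredience invariance of the spectral flow recalled in Subsection~\ref{sectionsf} gives $\spfl(\widetilde{T}^1,[a,b])=\spfl(\widetilde{T}^2,[a,b])$. No serious obstacle is anticipated: the entire argument is a careful identification of adjoints plus one application of cogredience invariance. The only delicate point is verifying that the adjoint of $\alpha^i_t$, viewed as a map $\mathcal{V}_t\to\mathcal{V}^i_\star$, equals the restriction to $\mathcal{V}^i_\star$ of the operator $(\Psi^i_t|_{\mathcal{V}_t})^{\ast}$ used in \eqref{eq:defwidetildeTt}; this follows at once from the footnote together with the fact that $\Psi^i_t$ maps $\mathcal{V}_t$ onto $\mathcal{V}^i_\star$.
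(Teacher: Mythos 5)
Your proof is correct and follows essentially the same route as the paper: your transition isomorphism $R_t=(S_t^*)^{-1}=\big((\alpha^2_t)^*\big)^{-1}\circ(\alpha^1_t)^*$ is exactly the operator $\Phi_t=\big(P_{\mathcal V_t}\circ(\widehat\Psi_t^*\vert_{\widehat{\mathcal V}_\star})\big)^{-1}\circ P_{\mathcal V_t}\circ(\Psi_t^*\vert_{\mathcal V_\star})$ used there, and your identity $\widetilde T^1_t=R_t^*\,\widetilde T^2_t\,R_t$ is the paper's $\widetilde T_t=\Phi_t^*\circ\widehat T_t\circ\Phi_t$. Both arguments conclude by the cogredience invariance of the spectral flow; the only difference is that you make the intermediate factorization $\widetilde T^i_t=\alpha^i_t\circ F_t\circ(\alpha^i_t)^*$ explicit, which the paper leaves implicit.
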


\begin{proof}
Assume that $(\widehat{\mathcal V}_\star,\widehat\Psi)$
is another trivialization of $\mathcal D$. Denoting by $\widehat P_\star$
the orthogonal projection onto $\widehat{\mathcal V}_\star$, set
\[
\widehat T_t=\widehat P_\star\circ\big(\widehat\Psi_t\big\vert_{\mathcal V_t}\big)
\circ P_{\mathcal V_{t}}\circ T_t\circ
\big(\widehat\Psi_t\big\vert_{{\mathcal V}_t}\big)^*:\widehat{\mathcal V}_\star\longrightarrow
\widehat{\mathcal V}_\star
\]
and denote by $\Phi_t:\mathcal V_\star\to\widehat{\mathcal V}_\star$ the isomorphism
$\big(P_{\mathcal V_t}\circ(\widehat\Psi_t^*\big\vert_{\widehat{\mathcal V}_\star})\big)^{-1}\circ P_{\mathcal V_t}\circ(\Psi_t^*\big\vert_{\mathcal V_\star})$.
If $\widetilde T_t$ is as in formulas \eqref{eq:defwidetildeTt}, then
\[
\widetilde T_t=\Phi_t^*\circ \widehat T_t\circ\Phi_t
\]
for all $t$, hence $\spfl(\widetilde T,[a,b])=\spfl(\widehat T,[a,b])$, by the cogredient invariance of the spectral flow.
\end{proof}

Our aim is to show how the result of Theorem~\ref{riduzionespettro} can be employed in the
computation of the spectral flow in the case of varying domains.
Towards this goal, we observe preliminarily that if $(\widehat{\mathcal V}_\star,\widehat\Psi)$ is an orthogonal trivialization of $\mathcal D$,
then $\Psi_t^*(\mathcal V_\star)=\mathcal V_t$ for\footnote{%
This holds more generally for splitting trivializations.} all $t$; this simplifies formula \eqref{eq:defwidetildeTt},
in that $(\Psi_t\vert_{\mathcal V_t})^*=\Psi_t^*\vert_{\mathcal V_\star}=\Psi_t^{-1}\vert_{\mathcal V_\star}$.
Moreover, it is easy to show that, given a continuous path $[a,b]\ni t\mapsto U_t$ with values in $\mathrm O(\mathcal H)$, the set of the orthogonal automorphisms of $\mathcal H$,
then the spectral flow of the path $[a,b]\ni t\mapsto T_t$ restricted to a continuous family of subspaces of $\mathcal H$,  $\mathcal D=\{\mathcal V_t\}_{t\in[a,b]}$ is equal
to the spectral flow of the path $[a,b]\ni t\mapsto U_tT_tU_t^*$ restricted to the family $\{U_t(\mathcal V_t)\}_{t\in[a,b]}$.

We are now ready for the following result.

\begin{prop}\label{thm:propsubs}
Let $T:[a,b]\to\Fredsa(\mathcal H)$ be a continuous path of the form $T_t=\mathfrak I+K_t$, where $\mathfrak I$
is a symmetry of $\mathcal H$ and $K_t$ is, for any $t\in [a,b]$, a self-adjoint compact operator on $\mathcal H$.
Consider a continuous family $\mathcal D=\{\mathcal V_t\}_{t\in[a,b]}$  of (finite codimensional) closed subspaces of $\mathcal H$.
Then, there exists an orthogonal trivialization $(\mathcal V_\star,\Psi)$ of $\mathcal D$ (with $\mathcal V_\star$ finite codimensional)
and a symmetry $\widetilde{\mathfrak J}:\mathcal H\to\mathcal H$
such that $\Psi_tT_t\Psi_t^*-\widetilde{\mathfrak J}$ is compact for all $t\in[a,b]$.
\end{prop}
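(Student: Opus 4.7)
The plan is to begin with the global orthogonal trivialization $(\mathcal V_\star,\Psi^{(0)})$ produced by the proposition immediately preceding, so that $\Psi^{(0)}_t\in\mathrm O(\mathcal H)$ maps $\mathcal V_t$ onto $\mathcal V_\star$ (and hence $\mathcal V_t^\perp$ onto $\mathcal V_\star^\perp$, which is finite dimensional since each $\mathcal V_t$ has constant finite codimension). Expanding the conjugated path gives
\[
\Psi^{(0)}_t T_t (\Psi^{(0)}_t)^* = \Psi^{(0)}_t\mathfrak I(\Psi^{(0)}_t)^* + \Psi^{(0)}_t K_t (\Psi^{(0)}_t)^*,
\]
and the second summand is compact for every $t$, being the orthogonal conjugate of a compact operator, so the entire task reduces to controlling the $t$-dependent symmetry part $\mathfrak I^{(0)}_t:=\Psi^{(0)}_t\mathfrak I(\Psi^{(0)}_t)^*$.

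Next I would pass to the block decomposition $\mathcal H=\mathcal V_\star\oplus\mathcal V_\star^\perp$. Since $\mathcal V_\star^\perp$ is finite dimensional, all off-diagonal blocks of $\Psi^{(0)}_tT_t(\Psi^{(0)}_t)^*$, together with the $\mathcal V_\star^\perp$--$\mathcal V_\star^\perp$ block, are automatically of finite rank, hence compact; so the only genuinely infinite-dimensional block is
\[
A_t:=P_{\mathcal V_\star}\Psi^{(0)}_tT_t(\Psi^{(0)}_t)^*|_{\mathcal V_\star}
=\widetilde\Psi^{(0)}_t\,\widetilde T_t\,(\widetilde\Psi^{(0)}_t)^*,
\]
where $\widetilde\Psi^{(0)}_t=\Psi^{(0)}_t|_{\mathcal V_t}:\mathcal V_t\to\mathcal V_\star$ is orthogonal and $\widetilde T_t=P_{\mathcal V_t}T_t|_{\mathcal V_t}$. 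By Lemma~\ref{symmetryrestrictions} applied pointwise, $\widetilde T_t=\mathfrak I_{\mathcal V_t}+C_t$ with $\mathfrak I_{\mathcal V_t}$ a symmetry of $\mathcal V_t$ and $C_t$ compact, so $A_t=\mathfrak J^{(0)}_t+(\textrm{compact})$ with $\mathfrak J^{(0)}_t:=\widetilde\Psi^{(0)}_t\mathfrak I_{\mathcal V_t}(\widetilde\Psi^{(0)}_t)^*$ a symmetry of $\mathcal V_\star$ for each $t$.

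From here the strategy is to absorb the $t$-dependence of $\mathfrak J^{(0)}_t$ into a further orthogonal correction. The continuous family $\{\mathcal W^{(0)}_t\}_{t\in[a,b]}$ of $+1$-eigenspaces of $\mathfrak J^{(0)}_t$ in $\mathcal V_\star$ is to be trivialized by applying the preceding proposition \emph{a second time}, this time to $\mathcal V_\star$ in place of $\mathcal H$: it produces a continuous path $O_t\in\mathrm O(\mathcal V_\star)$ sending each $\mathcal W^{(0)}_t$ to a fixed closed subspace $\widehat{\mathcal W}\subseteq\mathcal V_\star$. Setting $\Psi_t:=(O_t\oplus I_{\mathcal V_\star^\perp})\Psi^{(0)}_t$ gives a new orthogonal trivialization of $\mathcal D$, and now $O_t\mathfrak J^{(0)}_tO_t^*=2P_{\widehat{\mathcal W}}-I_{\mathcal V_\star}=:\widehat{\mathfrak J}$ is independent of $t$. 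Taking $\widetilde{\mathfrak J}:=\widehat{\mathfrak J}\oplus I_{\mathcal V_\star^\perp}$, a symmetry of $\mathcal H$, I would conclude by verifying block by block that $\Psi_tT_t\Psi_t^*-\widetilde{\mathfrak J}$ is compact.

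The principal obstacle I expect is ensuring that the family $\mathfrak J^{(0)}_t$ (hence $\mathcal W^{(0)}_t$) is genuinely continuous in $t$, so that the second invocation of the orthogonal trivialization result is legitimate. The canonical symmetry $\mathfrak I_{\mathcal V_t}$ produced by the construction of Lemma~\ref{symmetryrestrictions} is built from the spectral decomposition of $P_{\mathcal V_t}\mathfrak I|_{\mathcal V_t}$ and can jump where eigenvalues cross $0$; overcoming this requires either replacing $\mathfrak I_{\mathcal V_t}$ by a locally selected symmetry adapted to $\widetilde T_t$ that depends continuously on $t$, or a local-to-global patching argument on $[a,b]$ in which the finite-dimensional discrepancies introduced at each chart boundary are absorbed into the compact remainder.
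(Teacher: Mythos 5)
Your construction reproduces, step for step, the paper's own proof: take a global orthogonal trivialization $(\mathcal V_\star,\Phi)$ of $\mathcal D$ (Corollary~\ref{thm:esistenzatrivializzazioniortogonali}), note that $\Phi_tK_t\Phi_t^*$ and every block touching the finite--dimensional space $\mathcal V_\star^\perp$ is compact, write the compression $P_\star\Phi_tT_t\Phi_t^*\vert_{\mathcal V_\star}$ as a symmetry of $\mathcal V_\star$ plus a compact operator via Lemma~\ref{symmetryrestrictions}, straighten the resulting path of symmetries by a path in $\mathrm O(\mathcal V_\star)$ (your second invocation of the trivialization result is exactly Corollary~\ref{thm:esistenzatrivializzazioniortogonalisym}), extend by the identity on $\mathcal V_\star^\perp$, and compose. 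The block-by-block verification at the end is correct.

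The obstacle you flag, continuity of $t\mapsto\mathfrak J^{(0)}_t$, is precisely the point where the paper is laconic: it asserts that Lemma~\ref{symmetryrestrictions} produces a \emph{continuous} path $t\mapsto\mathfrak I^\star_t$ of symmetries, whereas the lemma as proved builds a single symmetry from the splitting $\Ker(S^2-I)\oplus\Imm(S^2-I)$, and for $S_t=P_\star\Phi_t\mathfrak I\Phi_t^*\vert_{\mathcal V_\star}$ this splitting (indeed its dimension) can jump with $t$, exactly as you say; so you are not missing an argument that the paper supplies. One caution on your proposed remedies: a patching scheme that keeps the symmetry selected at some $t_0$ on a whole subinterval cannot work, because for $t\neq t_0$ the difference $S_t-S_{t_0}$ (equivalently $\Phi_t\mathfrak I\Phi_t^*-\Phi_{t_0}\mathfrak I\Phi_{t_0}^*$) is in general neither of finite rank nor compact, so the discrepancy is not a ``finite--dimensional'' one that can be absorbed into the compact remainder; moreover, two admissible local selections at a common time differ by a compact operator but may have nonzero relative dimension of their positive eigenspaces, which obstructs interpolating between them within the required class unless the selections are adjusted by finite--dimensional flips. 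A complete argument therefore has to produce a genuinely continuous selection of symmetries compactly close to $S_t$ (only the finitely many eigenvalues of $S_t$ in $\left]-1,1\right[$ are at issue, since $S_t^2-I$ has rank at most $\codim\mathcal V_\star$), and this is the one piece of work that neither your sketch nor the paper's proof spells out; once it is granted, the rest of your argument closes exactly as in the paper.
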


\begin{proof}
Choose any orthogonal trivialization $({\mathcal V}_\star,\Phi)$ of $\mathcal D$, so that by what has been just observed,
the spectral flow of $T$ restricted to  $\mathcal D$ equals the spectral flow of $t\mapsto\Phi_tT_t\Phi_t^*=\Phi_t\mathfrak I\Phi_t^*+\Phi_tK_t\Phi_t^*$
restricted to the fixed subspace $\Phi_t(\mathcal V_t)=\mathcal V_\star$.

Since $\Phi_t$ is orthogonal, then, for all $t$, $\widehat{\mathfrak I}_t=\Phi_t\mathfrak I\Phi_t^*$ is a symmetry of $\mathcal H$;
the operator $\Phi_tK_t\Phi_t^*$ is clearly compact.
By Lemma~\ref{symmetryrestrictions}, if $P_\star$ is the
orthogonal projection onto $\mathcal V_\star$, the operator $P_\star\Phi_tT_t\Phi_t^*\vert_{\mathcal V_\star}\in\mathcal F_{\mathrm{sa}}(\mathcal V_\star)$
is of the form $\mathfrak I_t^\star+C_t$, where $t\mapsto\mathfrak I^\star_t$ is a continuous path of symmetries of the Hilbert space  $\mathcal V_\star$.
Now, by Corollary~\ref{thm:esistenzatrivializzazioniortogonalisym}, there exists a continuous path $t\mapsto U_t\in\mathrm O(\mathcal V_\star)$
and a fixed symmetry $\mathfrak J_\star$ of $\mathcal V_\star$ with the property that $U_t\mathfrak I_t^\star U_t^*=\mathfrak I_\star$ for all $t$.
Extend $\mathfrak J_\star$ to a symmetry $\widetilde{\mathfrak J}$ of $\mathcal H$ by setting $\widetilde{\mathfrak J}\vert_{\mathcal V_\star^\perp}$
equal to the identity, and each $U_t$ to an orthogonal operator $W_t\in\mathrm O(\mathcal H)$ by setting $W_t\vert_{\mathcal V_\star^\perp}$ equal to
the identity. Observe that $W_t$ commutes with $P_\star$ for all $t$, since $\mathcal V_\star$ is $W_t$-invariant.
Then, the required trivialization of $\mathcal D$ is obtained by setting $\Psi_t=W_t\Phi_t$ for all $t$.
\end{proof}

Using an orthogonal trivialization as in Proposition~\ref{thm:propsubs}, Theorem~\ref{riduzionespettro} can now be employed in
the computation of the spectral flow of restrictions to a varying family of finite codimensional subspaces.

\bth{riduspettrofamiglia}
Consider a continuous path $B:[a,b]\to\Bsym(\mathcal H)$ of symmetric Fredholm forms
on $\mathcal H$ and denote by $T:[a,b]\to\Fredsa(\mathcal H)$  the continuous paths of self-adjoint Fredholm operators
associated with $B$. Consider a continuous family $\mathcal D=\{\mathcal V_t\}_{t\in[a,b]}$  of (finite codimensional) closed subspaces of $\mathcal H$ and let $(\mathcal V_\star,\Psi)$ be an orthogonal trivialization of $\mathcal D$ and  $\widetilde{\mathfrak J}:\mathcal H\to\mathcal H$ be a symmetry such that $\Psi_tT_t\Psi_t^*-\widetilde{\mathfrak J}$ is compact for all $t\in[a,b]$.
Denote by $\widetilde T:[a,b]\to\Fredsa(\mathcal V_\star)$ the path $\widetilde T_t=P_\star\Psi_tT_t\Psi_t^*\vert_{\mathcal V_\star}$, where $P_{\star}$ is the projection onto $\mathcal V_\star$.

Then, we have
\begin{equation}\label{riduzionespazivariabili}
\spfl(T,[a,b])-\spfl\big(T,\mathcal D;[a,b]\big)
\!=\!\Dim\big(V^-(T_a),V^-(\widetilde T_a)\big)-
\Dim\big(V^-(T_b),V^-(\widetilde T_b)\big).
\end{equation}
\eth

\begin{proof}
Denote  $\widehat T_t=\Psi_tT_t\Psi_t^*:\mathcal H\to\mathcal H$, for any $t\in [a,b]$. Since $T$ and $\widehat T$ are cogredient, their spectral flows coincide, and, since $\widehat T$ is a path of compact perturbations of a symmetry, we have
\[
\spfl(T,[a,b])=\Dim\big(V^-(T_a),V^-(T_b)\big)= \spfl(\widehat T,[a,b])=\Dim\big(V^-(\widehat T_a),V^-(\widehat T_b)\big).
\]

Applying Theorem \ref{riduzionespettro}, we have
\[
\spfl(\widehat T,[a,b])-\spfl(\widetilde T,[a,b])=\Dim\big(V^-(\widehat T_a),V^-(\widetilde T_a)\big)-
\Dim\big(V^-(\widehat T_b),V^-(\widetilde T_b)\big),
\]
and, finally, by Lemma \ref{catena} the claim follows.
\end{proof}

\end{section}

\begin{section}{Spectral flow along periodic semi-Riemannian geodesics}
\label{sec:applicationsRGeom}
In this section we will discuss an application to semi-Riemannian geometry
of our spectral flow formula. We will define the \emph{spectral flow} of the
index form along a periodic geodesic in a semi-Riemannian manifold, and
we will compute its value in terms of the Maslov index of the geodesic.
In the Riemannian (i.e., positive definite) case, the spectral flow is
equal to the Morse index of the geodesic action functional at the
closed geodesic, and the Maslov index is given by the number of conjugate points
along a geodesic. In the general semi-Riemannian case, it is well known that
the Morse index of the geodesic action functional is infinite.

\subsection{Periodic geodesics}
We will consider throughout an $n$-dimensional semi-Riemannian manifold
$(M,g)$, denoting by $\nabla$ the covariant derivative of its Levi--Civita
connection, and by $R$ its curvature tensor, chosen with the sign convention
$R(X,Y)=[\nabla_X,\nabla_Y]-\nabla_{[X,Y]}$.

Let $\gamma:[0,1]\to M$ be a periodic geodesic in $M$, i.e., $\gamma(0)=\gamma(1)$ and
$\dot\gamma(0)=\dot\gamma(1)$. We will assume that $\gamma$ is \emph{orientation
preserving}, which means that the parallel transport along $\gamma$ is orientation preserving.
If $M$ is orientable, then every closed geodesic is orientation preserving. Moreover,
given any closed geodesic $\gamma$, its two-fold iteration $\gamma^{(2)}$, defined
by $\gamma^{(2)}(t)=\gamma(2t)$, is always orientation preserving.

We will denote by $\Ddt$ the covariant differentiation of vector fields along $\gamma$;
recall that the \emph{index form} $I_\gamma$ is the bounded symmetric
bilinear form defined on the Hilbert space of all periodic vector fields of Sobolev class $H^1$ along
$\gamma$, given by
\begin{equation}\label{eq:defindexform}
I_\gamma(V,W)=\int_0^1g\big(\Ddt V,\Ddt W)+g(RV,W)\,\mathrm dt,
\end{equation}
where we set $R=R(\dot\gamma,\cdot)\dot\gamma$.
Closed geodesics in $M$ are the critical points of the geodesic action functional
$f(\gamma)=\frac12\int_0^1g(\dot\gamma,\dot\gamma)\,\mathrm dt$ defined in
the \emph{free loop space} $\Omega M$ of $M$; $\Omega M$ is the Hilbert manifold of
all closed curves in $M$ of Sobolev class $H^1$. The index form $I_\gamma$ is the second
variation of $f$ at the critical point $\gamma$; unless $g$ is positive definite,
the Morse index of $f$ at each nonconstant critical point is infinite.
The notion of Morse index is replaced by the notion of spectral flow.

\subsection{Periodic frames and trivializations}
Consider a smooth periodic orthonormal frame $\mathbf T$ along $\gamma$, i.e.,
a smooth family $[0,1]\ni t\mapsto T_t$ of isomorphisms:
\begin{equation}
\label{eq:defperiodicframe}
T_t:\R^n\longrightarrow T_{\gamma(t)}M,
\end{equation}
 with $T_0=T_1$, and \begin{equation}\label{eq:deltaij}g(T_te_i,T_te_j)=\epsilon_i\delta_{ij},\end{equation}
 where $\{e_i\}_{i=1,\ldots,n}$ is the canonical basis of $\R^n$, $\epsilon_i\in\{-1,1\}$ and
 $\delta_{ij}$ is the Kronecker symbol.
 The existence of such a frame is guaranteed by the orientability assumption on the closed
 geodesic.
The pull-back by $T_t$ of the metric $g$ gives a symmetric nondegenerate bilinear form
$G$ on $\R^n$, whose index is the same as the index of $g$; note that this pull-back does not depend
on $t$, by the orthogonality assumption on the frame $\mathbf T$. In the sequel, we will
also denote by $G:\R^n\to\R^n$ the symmetric linear operator defined by $(Gv)\cdot w$;
By \eqref{eq:deltaij}, $G$ satisfies
\begin{equation}\label{eq:Gsymmetry}
G^2=I.
\end{equation}

 For all $t\in\left]0,1\right]$, define by $\mathcal H^\gamma_t$ the Hilbert space
 of all $H^1$-vector fields $V$ along $\gamma\vert_{[0,t]}$ satisfying
 \[
 T_0^{-1}V(0)=T_t^{-1}V(t).
 \]

Observe that the definition of $\mathcal H^\gamma_t$ depends on the choice of the periodic frame $\mathbf T$, however,
 $\mathcal H^\gamma_1$, which is the space of all periodic vector fields along $\gamma$, does not depend
 on $\mathbf T$. Although in principle there is no necessity of fixing a specific Hilbert space
 inner product, it will be useful to have one at disposal, and this will be chosen as follows.
 For all $t\in\left]0,1\right]$, consider the Hilbert space
 \[
 H^1_{\mathrm{per}}\big([0,t],\R^n\big)=\Big\{\overline V\in H^1\big([0,t],\R^n):\overline V(0)=\overline V(t)\Big\}.
 \]
 There is a natural Hilbert space inner product in $H^1_{\mathrm{per}}\big([0,t],\R^n\big)$ given by
 \begin{equation}\label{eq:innprodhi1per}
 \langle\overline V,\overline W\rangle=\overline V(0)\cdot\overline W(0)+\int_0^t\overline V'(s)\cdot\overline W'(s)\,\mathrm ds,
 \end{equation}
 where $\cdot$ is the Euclidean inner product in $\R^n$.
 The map $\Psi_t:\mathcal H^\gamma_t\to H^1_{\mathrm{per}}\big([0,t],\R^n\big)$ defined by $\Psi_t(V)=\overline V$,
 where $\overline V(s)=T_s^{-1}(V(s))$ is an isomorphism; the space $\mathcal H^\gamma_t$ will be endowed with
 the pull-back of the inner product \eqref{eq:innprodhi1per} by the isomorphism $\Psi_t$.
 Denote by $\overline R_t\in\mathrm{L}(\R^n)$ the pull-back by $T_t$ of the endomorphism $R_{\gamma(t)}=R(\dot\gamma,\cdot)\dot\gamma$
 of $T_{\gamma(t)}M$:
 \[
 \overline R_t=T_t^{-1}\circ R_{\gamma(t)}\circ T_t;
 \]
 observe that $t\mapsto\overline R_t$ is a smooth map of $G$-symmetric endomorphisms of $\R^n$. Finally, denote by $\Gamma_t\in\mathrm{L}(\R^n)$ the
 \emph{Christoffel symbol} of the frame $\mathbf T$, defined by
 \[
 \Gamma_t(v)=T_t^{-1}\big(\Ddt V\big)-\frac{\mathrm d}{\mathrm dt}\overline V(t),
 \]
 where $\overline V$ is any vector field satisfying $\overline V(t)=v$, and $V=\Psi^{-1}_t(\overline V)$.
 The push-forward  by $\Psi_t$ of the index form $I_\gamma$ on $\mathcal H^\gamma_t$ is given by the bounded
 symmetric bilinear form $\overline I_t$ on $H^1_{\mathrm{per}}\big([0,t],\R^n\big)$ defined by
 \begin{multline}\label{eq:defoverlineIt}
 \overline I_t(\overline V,\overline W)=\int_0^tG\big(\overline V'(s),\overline W'(s)\big)+G\big(\Gamma_s\overline V(s),\overline W'(s)\big)+
 G\big(\Gamma_s\overline W(s),\overline V'(s)\big)\\+G\big(\Gamma_s\overline V(s),\Gamma_s\overline W(s)\big)+G\big(\overline R_s\overline V(s),\overline W(s)\big)\,\mathrm ds.
 \end{multline}
 Finally, for $t\in\left]0,1\right]$, we will consider the isomorphism \[\Phi_t:H^1_{\mathrm{per}}\big([0,t],\R^n\big)\to H^1_{\mathrm{per}}\big([0,1],\R^n\big),\]
 defined by $\overline V\mapsto\widetilde V$, where $\widetilde V(s)=\overline V(st)$, $s\in[0,1]$.
 The push-forward by $\Phi_t$ of the bilinear form $\overline I_t$ is given by the bounded symmetric
 bilinear form $\widetilde I_t$ on $H^1_{\mathrm{per}}\big([0,1],\R^n\big)$ defined by:
 \begin{multline}\label{eq:deftildeIt}
 \widetilde I_t(\widetilde V,\widetilde W)=\frac1{t^2}\int_0^1G\big(\widetilde V'(r),\widetilde W'(r)\big)+tG\big(\Gamma_{tr}\widetilde V(r),\widetilde W'(r)\big)+
t G\big(\Gamma_{tr}\widetilde W(r),\widetilde V'(r)\big)\\+t^2G\big(\Gamma_{tr}\widetilde V(r),\Gamma_{tr}\widetilde W(r)\big)+t^2G\big(\widetilde R_{tr}\widetilde V(r),\widetilde W(r)\big)\,\mathrm dr.
  \end{multline}

\subsection{Spectral flow of a periodic geodesic}
For $t\in\left]0,1\right]$, define the Fredholm bilinear form $B_t$ on the Hilbert space $H^1_{\mathrm{per}}\big([0,1],\R^n\big)$
by setting
\begin{equation}\label{eq:defformaBt}
B_t=t^2\cdot\widetilde I_t.
\end{equation}
{}From \eqref{eq:deftildeIt} we obtain immediately the following result.
\begin{lem}
The map $\left]0,1\right]\ni t\mapsto B_t$ can be extended continuously to $t=0$ by setting:
\[B_0(\widetilde V,\widetilde W)=\int_0^1G\big(\widetilde V'(r),\widetilde W'(r)\big)\,\mathrm dr.\eqno{\qed}\]
\end{lem}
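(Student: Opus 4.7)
The plan is to multiply the formula for $\widetilde I_t$ in \eqref{eq:deftildeIt} by $t^2$, and then verify that the resulting expression for $B_t$ depends continuously on $t\in\left[0,1\right]$ when extended by the claimed formula at $t=0$. Explicitly, substitution yields
\[
B_t(\widetilde V,\widetilde W)=\int_0^1 G(\widetilde V',\widetilde W')\,\mathrm dr + t\!\int_0^1\!\big[G(\Gamma_{tr}\widetilde V,\widetilde W')+G(\Gamma_{tr}\widetilde W,\widetilde V')\big]\,\mathrm dr + t^2\!\int_0^1\!\big[G(\Gamma_{tr}\widetilde V,\Gamma_{tr}\widetilde W)+G(\widetilde R_{tr}\widetilde V,\widetilde W)\big]\,\mathrm dr,
\]
so that the candidate extension corresponds precisely to dropping the terms with positive powers of $t$. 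Since $B_0$ as defined is visibly a bounded symmetric bilinear form on $H^1_{\mathrm{per}}([0,1],\R^n)$ (by Cauchy--Schwarz and boundedness of $G$), the only thing to check is the operator-norm convergence $B_t\to B_0$ as $t\to 0^+$.

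First I would observe that, by smoothness of the periodic frame $\mathbf T$ and of the curvature tensor along $\gamma$, the maps $(t,r)\mapsto\Gamma_{tr}$ and $(t,r)\mapsto\widetilde R_{tr}$ are continuous on the compact set $[0,1]\times[0,1]$, hence uniformly bounded by some constant $M$. I would then estimate $B_t(\widetilde V,\widetilde W)-B_0(\widetilde V,\widetilde W)$ term by term. The two mixed terms are controlled by $t\cdot M\cdot \|G\|\cdot\|\widetilde V\|_{L^2}\|\widetilde W'\|_{L^2}$ and its symmetric analogue, which are bounded by $C\,t\,\|\widetilde V\|_{H^1}\|\widetilde W\|_{H^1}$. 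The two quadratic-in-$t$ terms are handled similarly; for the curvature term $G(\widetilde R_{tr}\widetilde V,\widetilde W)$ I would use the continuous embedding $H^1([0,1])\hookrightarrow L^\infty([0,1])$ (or simply bound the $L^2$-norms by the $H^1$-norms) to obtain a bound $C\,t^2\,\|\widetilde V\|_{H^1}\|\widetilde W\|_{H^1}$. Combining these estimates yields
\[
\big|B_t(\widetilde V,\widetilde W)-B_0(\widetilde V,\widetilde W)\big|\le C\,t\,\|\widetilde V\|_{H^1}\|\widetilde W\|_{H^1},\qquad t\in[0,1],
\]
where $C$ depends only on $M$, $\|G\|$, and the Sobolev embedding constant. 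This forces $\|B_t-B_0\|\to 0$, establishing continuity at $0$.

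For continuity on $\left]0,1\right]$, which the earlier discussion implicitly assumes, I would note that the same pointwise bounds combined with uniform continuity of $\Gamma$ and $\widetilde R$ on the compact rectangle give $\|B_t-B_{t_0}\|\to 0$ as $t\to t_0$ for any $t_0\in\left]0,1\right]$; alternatively, one can invoke the smoothness of the pullback procedure via $\Psi_t$ and $\Phi_t$ since the underlying geodesic data are smooth. The only real technical point is that the factor $\tfrac{1}{t^2}$ in \eqref{eq:deftildeIt} would make $\widetilde I_t$ itself blow up at $t=0$; the rescaling $B_t=t^2\widetilde I_t$ is tailored precisely to absorb this singularity. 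I do not anticipate any serious obstacle: the argument is a routine continuity estimate in the $H^1$-topology, and no delicate spectral or Fredholm analysis is involved at this stage.
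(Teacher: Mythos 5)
Your proof is correct and is exactly the routine verification the paper has in mind: the paper states this lemma with no proof at all (the \qed is placed in the statement, following the remark that it is obtained ``immediately'' from \eqref{eq:deftildeIt}), since multiplying by $t^2$ kills the singular factor and the remaining terms carry positive powers of $t$ with uniformly bounded coefficients. Your norm estimate $\big|B_t(\widetilde V,\widetilde W)-B_0(\widetilde V,\widetilde W)\big|\le C\,t\,\Vert\widetilde V\Vert_{H^1}\Vert\widetilde W\Vert_{H^1}$ is precisely the justification of that claim, so there is nothing to add.
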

Observe that $\Ker B_0$ is one-dimensional, and it consists of all constant vector fields.
\begin{prop}
For all $t\in\left[0,1\right]$, the bilinear form $\widetilde I_t$ on $H^1_{\mathrm{per}}\big([0,1],\R^n\big)$ is
represented with respect to the inner product \eqref{eq:innprodhi1per} by a compact perturbation of the
symmetry $\mathfrak J$ of $H^1_{\mathrm{per}}\big([0,1],\R^n\big)$ given by $\widetilde V\mapsto G\widetilde V$.
\end{prop}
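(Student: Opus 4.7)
The plan is to identify the operator $\mathcal T_t$ representing $B_t=t^2\widetilde I_t$ (the form that extends continuously to $[0,1]$) as $\mathfrak J$ plus a compact remainder, by examining separately each of the five terms in the integrand of \eqref{eq:deftildeIt}. I would first note that $\mathfrak J:\widetilde V\mapsto G\widetilde V$ is indeed a symmetry of $H^1_{\mathrm{per}}([0,1],\R^n)$: $\mathfrak J^2=I$ follows from \eqref{eq:Gsymmetry}, and $\mathfrak J$ is self-adjoint for \eqref{eq:innprodhi1per} because $G$ is a constant-coefficient symmetric endomorphism of $\R^n$ that commutes with differentiation.

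For the leading term $(\widetilde V,\widetilde W)\mapsto\int_0^1 G(\widetilde V'(r),\widetilde W'(r))\,\mathrm dr$, a direct calculation using \eqref{eq:innprodhi1per} yields $\langle\mathfrak J\widetilde V,\widetilde W\rangle=G\widetilde V(0)\cdot\widetilde W(0)+\int_0^1 G\widetilde V'(r)\cdot\widetilde W'(r)\,\mathrm dr$, so this term is represented by $\mathfrak J-F$, where $F$ is the (rank $\le n$) operator sending $\widetilde V$ to the constant loop $r\mapsto G\widetilde V(0)$; finite rank implies compact. The four remaining terms are bilinear forms of one of two shapes: either a pointwise pairing $(\widetilde V,\widetilde W)\mapsto\int_0^1\langle A(r)\widetilde V(r),\widetilde W(r)\rangle\,\mathrm dr$, or a mixed pairing $(\widetilde V,\widetilde W)\mapsto\int_0^1\langle A(r)\widetilde V(r),\widetilde W'(r)\rangle\,\mathrm dr$, where $A(\cdot)$ is a continuous family of endomorphisms of $\R^n$ built from $G$, $\Gamma_{t\cdot}$, and $\widetilde R_{t\cdot}$ (hence uniformly bounded on $[0,1]$). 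For each such form, the representing operator factors through the compact inclusion $\iota:H^1_{\mathrm{per}}([0,1],\R^n)\hookrightarrow L^2([0,1],\R^n)$ furnished by Rellich--Kondrachov on a bounded interval: in the pointwise case it reads $\iota^*\circ M_A\circ\iota$, while in the mixed case it reads $D^*\circ M_A\circ\iota$, where $D:H^1_{\mathrm{per}}\to L^2$ is the derivative and $M_A$ is multiplication by $A$ in $L^2$. The composition of bounded maps with the compact $\iota$ is compact.

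Summing the five contributions shows $\mathcal T_t=\mathfrak J+K_t$ with $K_t$ compact. The main technical point is precisely the factorization argument for the non-leading terms, where the Rellich--Kondrachov compactness together with the boundedness of $\Gamma_{t\cdot}$ and $\widetilde R_{t\cdot}$ on $[0,1]$ is decisive; the algebraic recovery of the boundary correction $F$ in the leading term is an elementary computation with \eqref{eq:innprodhi1per}. Continuity of $t\mapsto K_t$ in operator norm, needed later for the spectral flow to be well defined, follows from continuity of the coefficient families in the supremum norm, but is not strictly required for the present proposition.
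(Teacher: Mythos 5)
Your proposal is correct and follows essentially the same strategy as the paper's proof: the boundary contribution of the leading term is an explicit finite-rank (hence compact) correction to $\mathfrak J$, and the $\Gamma$- and $\widetilde R$-terms are represented by compact operators because they gain regularity from a compact Sobolev embedding. The only difference is technical: you factor each lower-order representing operator explicitly through the compact inclusion $H^1\hookrightarrow L^2$ (keeping the derivative as a bounded map $D:H^1_{\mathrm{per}}\to L^2$, so that the mixed terms read $D^*\circ M_A\circ\iota$ or its adjoint), whereas the paper argues that the difference $B_t-B_0$ is continuous in the $H^{1/2}$-topology and uses compactness of $H^1\hookrightarrow H^{1/2}$; both are valid, and your variant avoids fractional Sobolev spaces while making the representing operators explicit.
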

\begin{proof}
First, observe that $B_t$ is a compact perturbation of $B_0$. Namely, from \eqref{eq:deftildeIt} we get:
\begin{multline*}
B_t(\widetilde V,\widetilde W)-B_0(\widetilde V,\widetilde W)=\int_0^1tG\big(\Gamma_{tr}\widetilde V(r),\widetilde W'(r)\big)+
t G\big(\Gamma_{tr}\widetilde W(r),\widetilde V'(r)\big)\\+t^2G\big(\Gamma_{tr}\widetilde V(r),\Gamma_{tr}\widetilde W(r)\big)+t^2G\big(\widetilde R_{tr}\widetilde V(r),\widetilde W(r)\big)\,\mathrm dr.
\end{multline*}
The integral above defines a bilinear map which is continuous in the $H^{\frac12}$-topology, and thus it is represented by a compact
operator, since the inclusion $H^1\hookrightarrow H^{\frac12}$ is compact.
Next, observe that $B_0$ is represented by a compact perturbation of the symmetry $\mathfrak J$. For,
\[\langle\mathfrak J\widetilde V,\widetilde W\rangle-B_0(\widetilde V,\widetilde W)=G\big(\widetilde V(0),\widetilde W(0)\big),\]
which is continuous in the $C^0$-topology, hence represented by a compact operator. Note that $\mathfrak J$ is self-adjoint and,
by \eqref{eq:Gsymmetry}, $\mathfrak J^2=I$; thus, $\mathfrak J$ is a symmetry.
This concludes the proof.
\end{proof}
\begin{defin}\label{defin:spectral}
The \emph{spectral flow} $\spfl(\gamma)$ of the closed geodesic $\gamma$ is defined as the spectral flow of the continuous path of
Fredholm bilinear forms $[0,1]\ni t\mapsto B_t$ on the Hilbert space $H^1_{\mathrm{per}}\big([0,1],\R^n\big)$.
\end{defin}

\begin{rem}
The fact that the definition of $\spfl(\gamma)$ does not depend on the choice of a smooth periodic orthonormal
frame along $\gamma$ is a nontrivial fact, and it will be proven in next subsection by giving an explicit formula
for its computation.

We observe here that the paths of Fredholm bilinear forms $B_t$ as above produced by two distinct periodic
trivializations of the tangent bundle are in general neither fixed endpoint homotopic, nor cogredient.
Namely, two distinct trivializations \emph{differ} by a closed path in the (connected component of the
identity of the) Lie group $\mathrm O(G)$ of all $G$-preserving linear isomorphisms of $\R^n$,
which is not simply connected.
\end{rem}
\end{section}

\subsection{Computation of the spectral flow}
There is an integer valued invariant associated to every (fixed endpoints) geodesic
in a semi-Riemannian manifold $(M,g)$, called the \emph{Maslov index}. This is a symplectic
invariant, which is computed as an intersection number in the Lagrangian Grasmannian
of a symplectic vector space. Details on the definition and the computation of
the Maslov index for a given geodesic $\gamma$, that will be denoted by $\iMaslov(\gamma)$
can be found in \cite{CRASP04, asian, topology}.

As for the definition of spectral flow,
there are several conventions in the literature concerning the computation of the contribution
to the Maslov index of the endpoints of the geodesic. In this section we will convention\footnote{%
This is not a standard choice in the literature.}
that in the computation of the Maslov index $\iMaslov(\gamma)$ it is also considered the
contribution of the initial point of $\gamma$; the value of this contribution is easily computed to be
equal to $\mathrm n_-(g)$, which is the index of the semi-Riemannian metric tensor $g$.

Recall that a \emph{Jacobi field} along $\gamma$ is a smooth vector field $J$ along $\gamma$ that satisfies the second order linear equation
\[\
DDdt J(t)=R\big(\dot\gamma(t),J(t)\big)\,\dot\gamma(t),\quad t\in[0,1].
\]

Let us denote by $\mathcal J_\gamma$ the $2n$-dimensional real vector space of all Jacobi fields along $\gamma$.
Let us introduce the following spaces:
\begin{eqnarray*}
&&\mathcal J_\gamma^{\text{per}}=\Big\{J\in\mathcal J_\gamma:J(0)=J(1),\ \Ddt J(0)=\Ddt J(1)\Big\},\\
&&\mathcal J_\gamma^0=\Big\{J\in\mathcal J_\gamma:J(0)=J(1)=0\Big\},\quad\text{and}\\ &&\mathcal J_\gamma^\star=\Big\{J\in\mathcal J_\gamma:J(0)=J(1)\Big\}.
\end{eqnarray*}
It is well known that $\mathcal J_\gamma^{\text{per}}$ is the kernel of the index form $I_\gamma$ defined in \eqref{eq:defindexform},
while $\mathcal J_\gamma^0$ is the kernel of the restriction of the index form to the space of vector fields
along $\gamma$ vanishing at the endpoints. We denote by $\mathrm n_\gamma^{\text{per}}$ and $\mathrm n_\gamma^0$ the
dimensions of $\mathcal J_\gamma^{\text{per}}$ and $\mathcal J_\gamma^0$ respectively. The nonnegative integer
$\mathrm n_\gamma^{\text{per}}$ is the nullity of $\gamma$ as a periodic geodesic, i.e., the nullity of the Hessian
of the geodesic action functional at $\gamma$ in the space of closed curves. Observe that $\mathrm n_\gamma^{\text{per}}\ge1$,
as $\mathcal J_\gamma^{\text{per}}$ contains the one-dimensional space spanned by the tangent field $J=\dot\gamma$.
Similarly, $\mathrm n_\gamma^0$ is the nullity of $\gamma$ as a fixed endpoint geodesic, i.e., it is the nullity of
the Hessian of the geodesic action functional at $\gamma$ in the space of fixed endpoints curves in $M$.
In this case, $\mathrm n_\gamma^0>0$ if and only if $\gamma(1)$ is conjugate to $\gamma(0)$ along $\gamma$.

Given a semi-Riemannian geodesic $\gamma$,
the spectral flow of the path of symmetric Fredholm bilinear forms $[0,1]\ni t\mapsto B_t$ restricted
to the space $H^1_0\big([0,1],\R^n\big)$ will be denoted by $\spfl_0(\gamma)$. A formula giving
the value of this integer is proven in \cite[Proposition~2]{CRASP04}:
\begin{prop}
Given any (closed) semi-Riemannian geodesic $\gamma$, the following equality holds:
\begin{equation}\label{eq:spfl0}
\spfl_0(\gamma)=\mathrm n^0_\gamma-\mathrm n_-(g)-\iMaslov(\gamma).{\qed}
\end{equation}
\end{prop}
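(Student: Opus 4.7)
The plan is to compute $\spfl_0(\gamma)$ by a crossing-form analysis of the continuous path $t \mapsto B_t^0 := B_t\vert_{H^1_0([0,1],\R^n) \times H^1_0([0,1],\R^n)}$ of Fredholm symmetric bilinear forms. Fredholmness of each $B_t^0$ is guaranteed by Lemma~\ref{thm:restrFred}, since $H^1_0$ is a closed subspace of codimension $n$ in $H^1_\mathrm{per}([0,1],\R^n)$, and continuity is inherited from $t \mapsto B_t$. The spectral flow of a (generically regular) path of Fredholm quadratic forms admits the Robbin--Salamon description as a signed sum over interior crossing instants $t_* \in (0,1)$ of signatures of the crossing form $\frac{\mathrm d}{\mathrm dt}B_t^0\big\vert_{t=t_*}$ restricted to $\ker B_{t_*}^0$, plus endpoint corrections encoding the kernels at $t=0$ and $t=1$.

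First I would identify the crossings. For $t \in (0,1]$, the kernel $\ker B_t^0$ corresponds, via the composition $\Phi_t \circ \Psi_t$, to the space of Jacobi fields along $\gamma\vert_{[0,t]}$ vanishing at both endpoints; hence the crossing instants are precisely the conjugate instants of $\gamma$ relative to $\gamma(0)$, with multiplicity $\dim \mathcal J^0_{\gamma\vert_{[0,t]}}$. A direct computation, using the Jacobi equation and the rescaling $B_t = t^2 \widetilde I_t$, shows that the signature of the crossing form at each such $t_*$ coincides with the local contribution to $\iMaslov(\gamma)$ coming from the intersection of the Lagrangian $\{(J(t),\Ddt J(t)) : J \in \mathcal J_\gamma,\ J(0) = 0\}$ with the vertical Lagrangian in $T_{\gamma(0)}M \oplus T_{\gamma(0)}M$, up to an overall sign dictated by convention.

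Next I would treat the boundary contributions. At $t=0$ the form $B_0^0(V,W) = \int_0^1 G(V'(r),W'(r))\,\mathrm dr$ is nondegenerate on $H^1_0$, and its relative negative eigenspace (measured against the reference symmetry $\mathfrak J$ induced by $G$) contributes the term $-\mathrm n_-(g)$; this matches the fact that, by the convention adopted here, the initial-point contribution $\mathrm n_-(g)$ is included in $\iMaslov(\gamma)$. At $t=1$ the kernel of $B_1^0$ has dimension $\mathrm n^0_\gamma$, producing the endpoint term $+\mathrm n^0_\gamma$. Assembling the interior crossings and the two boundary contributions yields the identity $\spfl_0(\gamma) = \mathrm n^0_\gamma - \mathrm n_-(g) - \iMaslov(\gamma)$.

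The main obstacle is the precise matching of the crossing forms of $t\mapsto B_t^0$ with the crossing forms entering the definition of $\iMaslov(\gamma)$. This requires a translation between the infinite-dimensional functional-analytic picture (where the spectral flow lives) and the finite-dimensional symplectic picture (where the Maslov index is computed as a Lagrangian intersection number in $T_{\gamma(0)}M \oplus T_{\gamma(0)}M$), via the evaluation map $J \mapsto \bigl(J(t),\Ddt J(t)\bigr)$ on Jacobi fields. The careful sign bookkeeping for this identification, together with the handling of degenerate crossings and of the endpoint conventions, is carried out in \cite[Proposition~2]{CRASP04}, which the present proposition invokes.
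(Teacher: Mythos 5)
The paper gives no proof of this proposition: it is imported verbatim from \cite[Proposition~2]{CRASP04}, which is exactly the reference your proposal ultimately leans on for the identification of the crossing data with the Maslov index, the treatment of degenerate crossings, and the sign conventions, so your treatment matches the paper's. Just note that the preliminary crossing-form sketch is only heuristic as written, since for semi-Riemannian geodesics conjugate instants need not be isolated and crossings can be highly degenerate, which is precisely why \cite{CRASP04} computes the spectral flow via partial signatures rather than a naive sum of signatures of crossing forms.
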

Finally, the last ingredient needed for the computation of the spectral flow of a closed geodesic
is the so called \emph{index of concavity} of $\gamma$, that will be denoted by
$\mathrm i_{\text{conc}}(\gamma)$. This is a nonnegative integer invariant associated to
periodic solutions of Hamiltonian systems, first introduced by M. Morse \cite{Morseconc} in the context
of closed Riemannian geodesic. In our notations, $\mathrm i_{\text{conc}}(\gamma)$ is equal to
the index of the symmetric bilinear form:
\[
(J_1,J_2)\longmapsto g\big(\Ddt J_1(1)-\Ddt J_1(0),J_2(0)\big),
\]
defined on the vector space $\mathcal J_\gamma^\star$. It is not hard to show
that this bilinear form is symmetric, in fact, it is given by the restriction of
the index form $I_\gamma$ to $\mathcal J_\gamma^\star$.

It is now easy to apply Theorem~\ref{riduzionespettro} in order to obtain a formula for
the spectral flow of an oriented closed geodesic.
\bth{Morseperiodic}
Let $(M,g)$ be a semi-Riemannian manifold and let $\gamma:[0,1]\to M$ be a closed oriented geodesic in $M$.
Then, the spectral flow $\spfl(\gamma)$ is given by the following formula:
\begin{equation}\label{eq:formulaspectralflowperiodic}
\spfl(\gamma)=\Dim\big(\mathcal J_\gamma^{\text{per}}\cap\mathcal J_\gamma^0\big)-\iMaslov(\gamma)-\mathrm i_{\text{conc}}(\gamma)-\mathrm n_-(g).
\end{equation}
\eth
\begin{proof}
Set $\mathcal H=H^1_{\mathrm{per}}\big([0,1],\R^n\big)$, $\mathcal V=H^1_{0}\big([0,1],\R^n\big)$ in Theorem~\ref{riduzionespettro};
The difference $\spfl(\gamma)-\spfl_0(\gamma)$ is thus given by the  sum
of six terms, that are computed easily as follows.
The space $\mathcal V^{\perp_{B_0}}$ coincides with the kernel of $B_0$, and it is given by the one dimensional space
of constant vector fields on $[0,1]$; the restriction of $B_0$ to such space vanishes identically.
Moreover, $\mathcal V\cap\mathcal V^{\perp_{B_0}}=\mathcal V\cap\Ker B_0=\{0\}$.
A straightforward partial integration argument shows that the space $\mathcal V^{\perp_{B_1}}$
is given by $\mathcal J_\gamma^\star$ By definition, the index of the restriction of $B_1$
to this space equals $\mathrm i_{\text{conc}}(\gamma)$.
The space $\mathcal V\cap\mathcal V^{\perp_{B_1}}=\Ker\big(B_1\big\vert_{\mathcal V\times\mathcal V}\big)$
is given by $\mathcal J_\gamma^0$.
Finally, $\Ker B_1 =\mathcal J_\gamma^{\text{per}}$, thus $\Ker B_1 \cap\mathcal V=\mathcal J^{\text{per}}_\gamma\cap
\mathcal J_\gamma^0$. Formula \eqref{eq:formulaspectralflowperiodic} follows
now immediately from \eqref{eq:spfl0}.
\end{proof}

Formula \eqref{eq:formulaspectralflowperiodic} proves in particular that the definition of spectral flow
for a periodic geodesic $\gamma$ does not depend on the choice of an orthonormal frame along $\gamma$.

\begin{rem}
Our definition of spectral flow along a closed geodesic has used a periodic orthonormal frame
along the geodesic, which exists only if the geodesic is orientation preserving.  We observe however
that the right hand side of formula \eqref{eq:formulaspectralflowperiodic} is defined for every
closed geodesic, regardless of its orientability, which suggests that \eqref{eq:formulaspectralflowperiodic}
can be taken as the definition of spectral flow in the nonorientable case.
Let us sketch briefly how the right-hand side of \eqref{eq:formulaspectralflowperiodic} can be obtained
as a spectral flow of paths of Fredholm operators. Given a nonorientable closed geodesic
$\gamma$, choose an arbitrary smooth frame $\mathbf T$ along $\gamma$ as in \eqref{eq:defperiodicframe},
which will \emph{not} satisfy $T_0=T_1$; set $S=T_1^{-1}T_0\in\mathrm{GL}(\R^n)$.
Then, the spectral flow $\spfl(\gamma)$ is defined as the difference $\mathfrak{sf}_S(\gamma)-\mathrm n_S$, where
$\mathfrak{sf}_S(\gamma)$ is the spectral flow of the path of Fredholm bilinear forms
$[0,1]\ni t\mapsto B_t$ given in \eqref{eq:defformaBt} on the space
\[
H^1_S\big([0,1],\R^n\big)=\Big\{\widetilde V\in H^1\big([0,1],\R^n\big):\widetilde V(1)=S\widetilde V(0)\Big\},
\]
and $\mathrm n_S$ is the index of the restriction of the metric tensor $g$ to the image of the operator $S-I$
(compare with Definition~\ref{defin:spectral}).  Note that $S=I$ in the orientation preserving case.
With such definition, formula \eqref{eq:formulaspectralflowperiodic}
 holds also in the nonorientation preserving case. This is proven easily using Theorem~\ref{riduzionespettro},
 as in the proof of Theorem~\ref{Morseperiodic}. One sets $\mathcal H=H^1_{S}\big([0,1],\R^n\big)$,
 $\mathcal V=H^1_{0}\big([0,1],\R^n\big)$, and observes that in this case the  space   $\mathcal V^{\perp_{B_0}}$
 consists of all affine maps $\widetilde V:[0,1]\to\C^n$ of the form $\widetilde V(t)=(S-I)B+B$, where
 $B$ is an arbitrary vector in $\C^n$. The restriction of the the Hermitian form $B_0$ to such space
 equals the index of the restriction of $g$ to the image of $S-I$, from which the desired conclusion
 follows.
\end{rem}


\appendix
\begin{section}{Group actions and fibrations over the infinite dimensional Grassmannian}
\label{app:grassmannians}
In this appendix we will study the fibrations over the Grassmannian of all closed subspaces of a Hilbert space $\mathcal H$
determined by the actions of the general linear group $\mathrm{GL}(\mathcal H)$ and of the orthogonal group $\mathrm O(\mathcal H)$.

Let $\mathcal H$ be an infinite dimensional separable Hilbert space; denote, as in the previous sections, by $\mathrm L(\mathcal H)$ the Banach algebra of all bounded linear operators on $\mathcal H$, by  $\mathrm L_{\mathrm{sa}}(\mathcal H)$ (resp., $\mathrm L_{\mathrm{as}}(\mathcal H)$)
the subspace of $\mathrm L(\mathcal H)$  of self-adjoint operators (resp., of anti-symmetric operators),  by $\mathrm{GL}(\mathcal H)$ the
Banach Lie group of all bounded linear isomorphisms of $\mathcal H$ and by $\mathrm O(H)$ the subset of
$\mathrm{GL}(\mathcal H)$ consisting of isometries of $\mathcal H$:
\[
\mathrm O(\mathcal H)=\big\{T\in\mathrm{GL}(\mathcal H):T^*T=TT^*=I\big\}.
\]

By a well known result due to Kuiper \cite{Kui}, $\mathrm O(\mathcal H)$ is contractible; $\mathrm O(\mathcal H)$ is a smooth
embedded submanifold of $\mathrm{GL}(H)$, being the inverse image $f^{-1}(I)\cap\mathrm{GL}(\mathcal H)$
of the submersion $\mathrm L(\mathcal H)\ni T\mapsto T^*T\in\mathrm L_{\mathrm{sa}}(\mathcal H)$.
The tangent space $T_1\mathrm{GL}(\mathcal H)$ is $\mathrm L(\mathcal H)$; the tangent space
$T_1\mathrm O(\mathcal H)$ is the subspace $\mathrm L_{\mathrm{as}}(\mathcal H)$.
Denote by $\mathrm{Gr}(\mathcal H)$ the Grassmannian of all closed subspaces of $\mathcal H$, which is
a metric space endowed with the metric $\mathrm{dist}(\mathcal V,\mathcal W)=\Vert P_{\mathcal V}-P_{\mathcal W}\Vert$.
There is an action $\mathrm{GL}(\mathcal H)\times\mathrm{Gr}(\mathcal H)\to\mathrm{Gr}(\mathcal H)$ given by $(T,\mathcal V)\mapsto T(\mathcal V)$.

The set $\mathrm{Gr}(\mathcal H)$ has a real analytic Banach manifold structure, the action of $\mathrm{GL}(\mathcal H)$ is analytic,
and so is its restriction to the orthogonal group (see for instance \cite{AbbMej3}). The connected components of $\mathrm{Gr}(\mathcal H)$ are
the sets
\[
\mathrm{Gr}_{k_1,k_2}(\mathcal H)=\big\{\mathcal V\in\mathrm{Gr}(\mathcal H):\mathrm{dim}(\mathcal V)=k_1,\ \mathrm{dim}(\mathcal V^\perp)=k_2\big\},
\]
where $k_1,k_2\in\mathds N\cup\{+\infty\}$ are not both finite numbers. The action of $\mathrm O(\mathcal H)$ is transitive on each connected
component of $\mathrm{Gr}(\mathcal H)$. For all $\mathcal W\in\mathrm{Gr}(\mathcal H)$, the tangent space
$T_{\mathcal W}\mathrm{Gr}(\mathcal H)$ is identified with the Banach space $\mathrm  L(\mathcal W,\mathcal W^\perp)$
of all bounded linear operators $X:\mathcal W\to\mathcal W^\perp$.

Here comes a simple result on group actions, submersion and fibrations.
\begin{lem}\label{thm:actsubmfibr}
Let $M$ be a Banach manifold and let  $G$ be a Banach Lie group acting smoothly and transitively on $M$: \[G\times M\ni (g,m)\mapsto g\cdot m\in M.\]
Let $m\in M$ be fixed, and denote by $\beta_m:G\to M$ the map $\beta_m(g)=g\cdot m$.
\begin{itemize}
\item[(a)] If $\beta_m$ is a submersion at $g=1$, then $\beta_m$ is a submersion.
\smallskip

\item[(b)] If $\beta_m$ is a submersion, then $\beta_m$ is a smooth fibration with typical fiber the isotropy group $G_m$.
\end{itemize}
\end{lem}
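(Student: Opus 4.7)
For (a), the plan is to exploit the $G$-equivariance of $\beta_m$ to spread the submersion property from $g=1$ to all of $G$. Denoting by $L_h:G\to G$ left translation by $h$ and by $\phi_h:M\to M$ the diffeomorphism $x\mapsto h\cdot x$, the associativity of the action gives the identity $\beta_m\circ L_h=\phi_h\circ\beta_m$. Differentiating and evaluating at $g=1$ produces
\[
d(\beta_m)_h = d(\phi_h)_m\circ d(\beta_m)_1\circ (dL_h)_1^{-1},
\]
so $d(\beta_m)_h$ is the composition of $d(\beta_m)_1$ with two linear isomorphisms. I would then observe that composing with isomorphisms preserves both surjectivity and the property of having a complemented kernel, whence $\beta_m$ is a submersion at every $h\in G$.

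For (b), the strategy is to build one local trivialization around $m$ by inverting the submersion, and then propagate it to all of $M$ using transitivity. Since $\beta_m$ is a submersion, the Banach-manifold local-form-of-submersion theorem (a consequence of the implicit function theorem for Banach spaces) yields an open neighborhood $U$ of $m$ together with a smooth section $\sigma:U\to G$ such that $\sigma(m)=1$ and $\beta_m\circ\sigma=\mathrm{id}_U$; in parallel, $G_m=\beta_m^{-1}(m)$ is a closed embedded Banach submanifold of $G$ which, being a subgroup, is a Banach Lie subgroup. I would then set
\[
\Phi:U\times G_m\longrightarrow \beta_m^{-1}(U),\qquad \Phi(x,h)=\sigma(x)\,h,
\]
verify $\beta_m\circ\Phi=\mathrm{pr}_1$ by direct calculation, and exhibit the smooth two-sided inverse $g\mapsto\bigl(\beta_m(g),\sigma(\beta_m(g))^{-1}g\bigr)$. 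This makes $\Phi$ a $G_m$-equivariant diffeomorphism, i.e.\ a local trivialization over $U$.

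To upgrade this to a global fibration, I would use transitivity: for arbitrary $m'\in M$ pick $g_0\in G$ with $m'=g_0\cdot m$, set $U_{g_0}:=g_0\cdot U$, and define
\[
\Phi_{g_0}(x,h)=g_0\,\sigma(g_0^{-1}\cdot x)\,h,\qquad (x,h)\in U_{g_0}\times G_m.
\]
The same bookkeeping, combined with the equivariance of part (a), shows $\Phi_{g_0}$ is a local trivialization over $U_{g_0}$, and the $U_{g_0}$ cover $M$ as $g_0$ varies. The only genuinely technical point is the existence of the local section $\sigma$, which depends on $\ker d(\beta_m)_1$ being complemented---a condition already packed into the notion of submersion in the Banach setting; everything else is formal equivariance together with the smoothness of multiplication and inversion in the Banach Lie group $G$.
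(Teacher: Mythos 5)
Your proof is correct and follows essentially the same route as the paper: for (a) the equivariance identity $\beta_m\circ L_h=\phi_h\circ\beta_m$ differentiated at the identity, with surjectivity and complemented kernel preserved under composition with isomorphisms, and for (b) the trivialization $(x,h)\mapsto\sigma(x)h$ with inverse $g\mapsto\bigl(\beta_m(g),\sigma(\beta_m(g))^{-1}g\bigr)$ built from a local section of the submersion. The only cosmetic difference is that you construct one section near $m$ and transport it over $M$ by the group action, whereas the paper simply takes local sections of the (surjective, by transitivity) submersion $\beta_m$ around arbitrary points; both give the same cover by trivializing neighborhoods.
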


\begin{proof}
Denote by $L_g:G\to G$ the left translation by $g$: $L_g(h)=gh$, and by $\gamma_g:M\to M$ the diffeomorphism $\gamma_g(m)=g\cdot m$.
Then, $\beta_m\circ L_g=\gamma_g\circ\beta_m$; differentiating at $h=1$ gives
\[
\mathrm d\beta_m(g)\circ\mathrm dL_g(1)=\mathrm d\gamma_g(m)\circ\mathrm d\beta_m(1).
\]
Note that $\mathrm dL_g(1)$ and $\mathrm d\gamma_g(m)$ are isomorphisms. Thus, if $\mathrm d\beta_m(1)$ is surjective,
then so is $\mathrm d\beta_m(g)$. Similarly, if $\Ker\big(\mathrm d\beta_m(1)\big)$ is complemented,
then so is $\Ker\big(\mathrm d\beta_m(g)\big)=\mathrm dL_1(1)\big[\Ker\big(\mathrm d\beta_m(1)\big)\big]$. This proves part (a).

For part (b),  it suffices to show the existence of local trivializations. Note that the stabilizer $G_m$ of $m$ is a Lie subgroup of $G$, being the inverse
image of a value of a submersion: $G_m=\beta_m^{-1}(m)$. Let $S:U\subseteq M\to G$ be a local section of $\beta_m$; local sections exists
by the assumption that $\beta_m$ is a submersion. Then, a trivialization of $\beta_m^{-1}(U)$ is given by
\[
U\times G_m\ni(x,g)\longmapsto s(x)g\in\beta^{-1}(U).
\]
Obviously, this map is smooth, and its inverse is given by \[\beta_m^{-1}(U)\ni h\longmapsto\big(h\cdot m,s(h\cdot m)^{-1}h\big)\in U\times G_m,\]
which is also smooth.
\end{proof}

\begin{prop}
Let $\mathcal W\in\mathrm{Gr}(\mathcal H)$ be fixed and let $\mathrm{Gr}_{k_1,k_2}(\mathcal H)$ be its connected component
in $\mathrm{Gr}(\mathcal H)$. The map $\beta_{\mathcal W}:\mathrm{GL}(\mathcal H)\to\mathrm{Gr}_{k_1,k_2}(\mathcal H)$,
defined by $\beta_m(T)=T(\mathcal W)$, is a real analytic fibration. The same conclusion holds for the restriction of
$\beta_m$ to $\mathrm O(\mathcal H)$.
\end{prop}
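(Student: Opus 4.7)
The plan is to apply Lemma~\ref{thm:actsubmfibr} in both cases. Since the action of $\mathrm{GL}(\mathcal H)$ (hence also its restriction to $\mathrm O(\mathcal H)$, which is stated to be transitive on each connected component of $\mathrm{Gr}(\mathcal H)$) is transitive and real analytic, and since by part~(a) of that lemma it suffices to check the submersion property at the identity, the whole problem reduces to computing the differential $\mathrm d\beta_{\mathcal W}(1)$ and showing it is surjective with a complemented kernel. Analyticity of $\beta_{\mathcal W}$ as a fibration then follows automatically from the smooth/analytic version of part~(b) of the lemma, together with the analyticity of the group actions on $\mathrm{Gr}(\mathcal H)$.

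To compute $\mathrm d\beta_{\mathcal W}(1)$, I would fix the decomposition $\mathcal H=\mathcal W\oplus\mathcal W^\perp$, write $X\in T_1\mathrm{GL}(\mathcal H)=\mathrm L(\mathcal H)$ in block form $X=\begin{pmatrix}A&B\\C&D\end{pmatrix}$, and look at the curve $T_t=I+tX$. For small $t$, $T_t(\mathcal W)$ is the graph of a bounded operator $L_t:\mathcal W\to\mathcal W^\perp$; using the explicit formula from Lemma~\ref{thm:lemmaproiezionegrafo} for the orthogonal projection onto a graph (or more directly, solving $(I+tA)w+tBw'=\text{id}$ to leading order), one gets $L_t=tC+O(t^2)$. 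Under the identification $T_{\mathcal W}\mathrm{Gr}(\mathcal H)\cong\mathrm L(\mathcal W,\mathcal W^\perp)$ this yields
\[
\mathrm d\beta_{\mathcal W}(1)\cdot X=P_{\mathcal W^\perp}X\bigr|_{\mathcal W}=C.
\]
Surjectivity is immediate: any $C\in\mathrm L(\mathcal W,\mathcal W^\perp)$ is hit by the operator $\begin{pmatrix}0&0\\C&0\end{pmatrix}\in\mathrm L(\mathcal H)$. The kernel consists of block upper triangular operators $\begin{pmatrix}A&B\\0&D\end{pmatrix}$, and a topological complement is provided by the closed subspace of strictly lower block operators $\begin{pmatrix}0&0\\C&0\end{pmatrix}$. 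This proves the statement for $\mathrm{GL}(\mathcal H)$.

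For the orthogonal case, I would restrict the same computation to $T_1\mathrm O(\mathcal H)=\mathrm L_{\mathrm{as}}(\mathcal H)$. An antisymmetric operator has the block form
\[
X=\begin{pmatrix}A&-C^*\\C&D\end{pmatrix},\qquad A^*=-A,\ D^*=-D,
\]
with $C\in\mathrm L(\mathcal W,\mathcal W^\perp)$ arbitrary. The differential still sends $X\mapsto C$, and this is manifestly surjective onto $\mathrm L(\mathcal W,\mathcal W^\perp)$ (take $A=0$, $D=0$). Its kernel is the block-diagonal antisymmetric operators, complemented in $\mathrm L_{\mathrm{as}}(\mathcal H)$ by the closed subspace of block antidiagonal antisymmetric operators $\begin{pmatrix}0&-C^*\\C&0\end{pmatrix}$, parametrized by $C\in\mathrm L(\mathcal W,\mathcal W^\perp)$. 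Hence $\beta_{\mathcal W}\bigr|_{\mathrm O(\mathcal H)}$ is a submersion at the identity, and Lemma~\ref{thm:actsubmfibr} finishes the job.

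There is no serious obstacle here; the only point requiring some care is that in the infinite dimensional setting one must verify that the natural algebraic complement of the kernel is a \emph{closed} (hence topologically complemented) subspace of $\mathrm L(\mathcal H)$ (respectively of $\mathrm L_{\mathrm{as}}(\mathcal H)$). This is transparent from the block description, since the projection $X\mapsto\begin{pmatrix}0&0\\C&0\end{pmatrix}$, respectively $X\mapsto\begin{pmatrix}0&-C^*\\C&0\end{pmatrix}$, is a bounded idempotent.
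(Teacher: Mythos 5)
Your proposal is correct and follows essentially the same route as the paper: both reduce to Lemma~\ref{thm:actsubmfibr}, compute $\mathrm d\beta_{\mathcal W}(1)X=P_{\mathcal W^\perp}\circ X\vert_{\mathcal W}=X_{21}$ in block form, and exhibit surjectivity and a closed complement of the kernel in $\mathrm L(\mathcal H)$ and in $\mathrm L_{\mathrm{as}}(\mathcal H)$ via the same block-antidiagonal antisymmetric operators $\begin{pmatrix}0&-C^*\\ C&0\end{pmatrix}$. The only differences are cosmetic (your curve computation of the differential, and your explicit remark on boundedness of the idempotent projection), so nothing further is needed.
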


\begin{proof}
By part (a) and (b) of Lemma~\ref{thm:actsubmfibr}, it suffices to show that the linear map $\mathrm d\beta_{\mathcal W}(1):\mathrm L(\mathcal H)\to
\mathrm L(\mathcal W,\mathcal W^\perp)$ is surjective and that it has complemented kernel, as well as its restriction to $\mathrm L_{\mathrm{as}}(\mathcal H)$.
An explicit computation gives:
\[\mathrm d\beta_{\mathcal W}(1)X=P_{\mathcal W^\perp}\circ X\vert_{\mathcal W},\quad\forall\,X\in\mathrm L(\mathcal H),\]
where $P_{\mathcal W^\perp}$ is the orthogonal projection onto $\mathcal W^\perp$. Writing $X:\mathcal W\oplus\mathcal W^\perp\to\mathcal W\oplus\mathcal W^\perp$
in block form:
\[X=\begin{pmatrix}X_{11}&X_{12}\\ X_{21}&X_{22}\end{pmatrix},\]
then $\mathrm d\beta_{\mathcal W}(1)X=X_{21}:\mathcal W\to\mathcal W^\perp$. Clearly, a complement in $\mathrm L(\mathcal H)$ for the kernel
of this map is the closed subspace of $\mathrm L(\mathcal H)$ consisting of operators $Y$ that are written in block form as $Y=\begin{pmatrix}0&0\\Y_{21}&0\end{pmatrix}$,
where $Y_{21}\in\mathrm L(\mathcal W,\mathcal W^\perp)$.

Similarly, the kernel of  $\mathrm d\beta_{\mathcal W}(1):\mathrm L_{\mathrm{as}}(\mathcal H)\to
\mathrm L(\mathcal W,\mathcal W^\perp)$ consists of all anti-symmetric operators $X$ that are written in block form
as $X=\begin{pmatrix}X_{11}&0\\0&X_{22}\end{pmatrix}$, where $X_{11}\in\mathrm L_{\mathrm{as}}(\mathcal W)$ and $X_{22}\in\mathrm L_{\mathrm{as}}(\mathcal W^\perp)$.
A complement for this space in $\mathrm L_{\mathrm{as}}(\mathcal H)$ is given by the closed subspace of $\mathrm L_{\mathrm{as}}(\mathcal H)$ consisting
of all operators $Y$ that have block form $Y=\begin{pmatrix}0&Y_{12}\\Y_{12}^*&0\end{pmatrix}$, with $Y_{12}\in\mathrm L(\mathcal W^\perp,\mathcal W)$.

Moreover, it is easy to check that $\mathrm d\beta_{\mathcal W}(1):\mathrm L_{\mathrm{as}}(\mathcal H)\to\mathrm L(\mathcal W,\mathcal W^\perp)$
(and thus also $\mathrm d\beta_{\mathcal W}(1):\mathrm L(\mathcal H)\to\mathrm L(\mathcal W,\mathcal W^\perp)$) is surjective. Namely, given
any $A\in\mathrm L(\mathcal W,\mathcal W^\perp)$, there exists $X\in\mathrm L_{\mathrm{as}}(\mathcal H)$ whose lower down block $X_{21}$
relative to the decomposition $\mathcal H=\mathcal W\oplus\mathcal W^\perp$ equals $A$, for instance, $X=\begin{pmatrix}0&-A^*\\A&0\end{pmatrix}$.
This concludes the proof.
\end{proof}

\begin{cor}\label{thm:esistenzatrivializzazioniortogonali}
Given any curve $\mathcal V:[a,b]\to\mathrm{Gr}(\mathcal H)$ of class $C^k$, $k=0,\ldots,\infty,\omega$, given any $\mathcal W$ in the connected
component $\mathrm{Gr}_{k_1,k_2}(\mathcal H)$ of $\mathcal V_a$ in $\mathrm{Gr}(\mathcal H)$ and any isometry $\varphi:\mathcal H\to\mathcal H$
such that $\varphi(\mathcal W)=\mathcal V_a$, then there exists a curve $\Phi:[a,b]\to\mathrm O(\mathcal H)$ of
class $C^k$ such that $\Phi_t(\mathcal W)=\mathcal V_t$ for all $t\in[a,b]$ and with $\Phi_a=\varphi$.
\end{cor}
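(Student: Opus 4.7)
The plan is to apply the preceding proposition, which exhibits $\beta_{\mathcal W}:\mathrm O(\mathcal H)\to\mathrm{Gr}_{k_1,k_2}(\mathcal H)$, $T\mapsto T(\mathcal W)$, as a real-analytic fibration with fiber the isotropy Banach Lie group $G_{\mathcal W}=\{T\in\mathrm O(\mathcal H):T(\mathcal W)=\mathcal W\}$. The corollary reduces to producing a $C^k$ lift of $\mathcal V$ to $\mathrm O(\mathcal H)$ starting at the prescribed $\varphi\in\beta_{\mathcal W}^{-1}(\mathcal V_a)$. I would give two complementary constructions: a direct ODE argument that is conceptually cleanest for $k\ge 1$, and an inductive patching argument that covers the $C^0$ case as well.

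For $k\ge 1$, set $B(t)=[\dot P_{\mathcal V_t},P_{\mathcal V_t}]$, which is $C^{k-1}$ in $t$ because $t\mapsto P_{\mathcal V_t}$ is $C^k$ by Proposition~\ref{thm:smoothnessequivalent}. A short block-matrix computation in the splitting $\mathcal H=\mathcal V_t\oplus\mathcal V_t^\perp$, together with the identities $P_{\mathcal V_t}^2=P_{\mathcal V_t}$ and $P_{\mathcal V_t}\dot P_{\mathcal V_t}P_{\mathcal V_t}=0$, shows that $B(t)$ is antisymmetric and that $\dot P_{\mathcal V_t}=[B(t),P_{\mathcal V_t}]$. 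Then I solve the linear Cauchy problem $\dot\Phi_t=B(t)\Phi_t$, $\Phi_a=\varphi$; its unique solution $\Phi:[a,b]\to\mathrm L(\mathcal H)$ is of class $C^k$. Antisymmetry of $B$ forces $\tfrac{d}{dt}(\Phi_t^*\Phi_t)=0$, and since both $\Phi_t\Phi_t^*$ and the constant $I$ satisfy the matrix equation $\dot X=[B(t),X]$ with the common initial value $I$, uniqueness gives $\Phi_t\Phi_t^*=I$ as well; hence $\Phi_t\in\mathrm O(\mathcal H)$. Finally, both $P_{\mathcal V_t}$ and $P_{\Phi_t(\mathcal W)}=\Phi_tP_{\mathcal W}\Phi_t^*$ satisfy the same Lax equation $\dot X=[B(t),X]$ with identical initial value $\varphi P_{\mathcal W}\varphi^*=P_{\mathcal V_a}$ at $t=a$, so by uniqueness they coincide and $\Phi_t(\mathcal W)=\mathcal V_t$.

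For general $k$ (including $k=0$), I would fall back on patching. Using local triviality, choose a partition $a=t_0<\cdots<t_N=b$ and open neighborhoods $U_j\supset\mathcal V([t_j,t_{j+1}])$ in $\mathrm{Gr}_{k_1,k_2}(\mathcal H)$ admitting real-analytic local sections $s_j:U_j\to\mathrm O(\mathcal H)$ of $\beta_{\mathcal W}$. Build $\Phi$ inductively: put $\Phi_t=s_0(\mathcal V_t)g_0$ on $[a,t_1]$, with $g_0\in G_{\mathcal W}$ chosen so that $\Phi_a=\varphi$. Having extended $\Phi$ to be $C^k$ on $[a,t_j]$, read it in the local trivialization $\phi_j:\beta_{\mathcal W}^{-1}(U_j)\to U_j\times G_{\mathcal W}$ on a left neighborhood of $t_j$ as $\Phi_t=\phi_j^{-1}(\mathcal V_t,h(t))$ with $h$ a $C^k$ curve into $G_{\mathcal W}$, extend $h$ to a $C^k$ curve $\tilde h$ on $[t_j-\varepsilon,t_{j+1}]$ inside an exponential chart of $G_{\mathcal W}$, and define $\Phi_t=\phi_j^{-1}(\mathcal V_t,\tilde h(t))$ on $[t_j,t_{j+1}]$.

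The main obstacle in the patching construction is ensuring $C^k$-regularity across the gluing points $t_j$: this is reduced to the elementary question of extending a $C^k$ curve from a closed half-interval to a full neighborhood of its endpoint, and is handled in an exponential chart of $G_{\mathcal W}$ by matching the Taylor data of $h$ at $t_j$. The ODE approach bypasses this difficulty altogether, at the price of losing direct access to the case $k=0$, which is in turn recovered by the patching argument.
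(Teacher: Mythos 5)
Your argument is correct, but it takes a different route from the paper. The paper's own proof is a one-liner: having shown in the preceding proposition that $\beta_{\mathcal W}\colon\mathrm O(\mathcal H)\to\mathrm{Gr}_{k_1,k_2}(\mathcal H)$, $T\mapsto T(\mathcal W)$, is a real analytic fibration (via Lemma~\ref{thm:actsubmfibr}), it simply invokes the path-lifting property of this fibration to lift $\mathcal V$ through $\beta_{\mathcal W}$ with prescribed initial value $\varphi$, the lift inheriting the $C^k$ regularity of $\mathcal V$. Your first construction is instead the classical Kato transformation-function argument: it uses only Proposition~\ref{thm:smoothnessequivalent}, never the fibration structure, and the identities you use ($B(t)^*=-B(t)$, $P\dot P P=0$, $\dot P=[B,P]$, and the uniqueness argument for the Lax equation satisfied by both $P_{\mathcal V_t}$ and $\Phi_tP_{\mathcal W}\Phi_t^*$) are all correct; it covers $k=1,\dots,\infty,\omega$ uniformly, the analytic case because linear ODEs with analytic coefficients have analytic solutions, and it has the added virtue of producing an explicit lift. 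Your second construction essentially re-proves by hand, via local sections and stepwise gluing, the lifting property that the paper takes off the shelf; it is exactly what is needed for $k=0$ (where one may even take $\tilde h$ constant to the right of $t_j$), and for finite $k$ or $k=\infty$ the extension of $h$ in an exponential chart of $G_{\mathcal W}$ is standard, but as stated it does not quite deliver $k=\omega$, since an analytic extension of $h$ past $t_j$ valid on all of $[t_j,t_{j+1}]$ is not automatic; this is harmless because your ODE argument already handles every $k\ge1$. In short: the paper's proof is shorter and uniform in $k$ once the fibration proposition is available, while yours is more elementary and explicit for $k\ge1$ and replaces the abstract lifting step by a concrete induction for $k=0$.
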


\begin{proof}
$\Phi$ is a lifting of the curve $\mathcal V$ in the fibration $\beta_{\mathcal W}$:
\[\vcenter{\xymatrix{&&\mathrm O(\mathcal H)\ar[d]^{\beta_{\mathcal W}}\cr[a,b]\ar[rr]_{\mathcal V}\ar@{.>}[urr]^\Phi &&\mathrm{Gr}_{k_1,k_2}(\mathcal H)}}\qedhere\]
\end{proof}

\medskip

It is interesting to restate the result above in terms of symmetries. Recall that by a symmetry of $\mathcal H$ we mean
a self-adjoint operator $\mathfrak I$ on $\mathcal H$ such that $\mathfrak I^2=I$. Denote by $\mathfrak S(\mathcal H)$ the closed subset
of $\mathrm O(\mathcal H)$ consisting of all symmetries of $\mathcal H$; the bijection $\mathfrak S(\mathcal H)\ni\mathfrak I\mapsto\Ker(\mathfrak I-I)\in\mathrm{Gr}(\mathcal H)$
is a homeomorphism, whose inverse is
\[
\mathrm{Gr}(\mathcal H)\ni\mathcal V\mapsto P_{\mathcal V}-P_{\mathcal V^\perp}\in\mathfrak S(\mathcal H).
\]
This
bijection carries the action $\mathrm O(\mathcal H)\times\mathrm{Gr}(\mathcal H)\ni (U,\mathcal V)\mapsto U(\mathcal V)\in\mathrm{Gr}(\mathcal H)$
into the cogredient action:
\[\mathrm O(\mathcal H)\times\mathfrak S(\mathcal H)\ni (U,\mathfrak I)\longmapsto U\mathfrak IU^*\in\mathfrak S(\mathcal H),\]
i.e., if $\mathfrak I=P_{\mathcal V}-P_{\mathcal V^\perp}$, then $U\mathfrak IU^*=P_{U(\mathcal V)}-P_{U(\mathcal V)^\perp}$.
Thus, Corollary~\ref{thm:esistenzatrivializzazioniortogonali} can be translated as follows.
\begin{cor}\label{thm:esistenzatrivializzazioniortogonalisym}
Let $[a,b]\ni t\mapsto\mathfrak I_t\in\mathrm{GL}(\mathcal H)$ be a map of class $C^k$, $k=0,\ldots,\infty,\omega$, where
$\mathfrak I_t\in\mathfrak S(\mathcal H)$ for all $t$. Then, there exists a $C^k$ map $[a,b]\ni t\mapsto U_t\in\mathrm O(\mathcal H)$
and a fixed symmetry $\mathfrak I\in\mathfrak S(\mathcal H)$ such that $U_t\mathfrak I_tU_t^*=\mathfrak I$ for all $t$.\qed
\end{cor}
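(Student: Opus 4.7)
The plan is to translate the statement into a problem about paths of closed subspaces and then invoke Corollary~\ref{thm:esistenzatrivializzazioniortogonali}, using the homeomorphism $\mathfrak S(\mathcal H)\cong\mathrm{Gr}(\mathcal H)$ that identifies a symmetry with its $(+1)$-eigenspace.

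First, I would set $\mathcal V_t=\Ker(\mathfrak I_t-I)$ so that $\mathfrak I_t=P_{\mathcal V_t}-P_{\mathcal V_t^\perp}$, and verify that $[a,b]\ni t\mapsto\mathcal V_t\in\mathrm{Gr}(\mathcal H)$ is of class $C^k$. This is where smoothness must be transferred from operators to subspaces: the orthogonal projection onto $\mathcal V_t$ is literally
\[
P_{\mathcal V_t}=\tfrac12(\mathfrak I_t+I),
\]
so $t\mapsto P_{\mathcal V_t}$ is $C^k$ as a map into $\mathrm L(\mathcal H)$, and Proposition~\ref{thm:smoothnessequivalent} then gives that $\{\mathcal V_t\}_{t\in[a,b]}$ is a $C^k$ family of closed subspaces. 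Since $\mathfrak I_t$ depends continuously on $t$, all the $\mathcal V_t$ lie in a single connected component $\mathrm{Gr}_{k_1,k_2}(\mathcal H)$ of $\mathrm{Gr}(\mathcal H)$; in particular, $\mathcal V_a$ and any chosen reference subspace $\mathcal W$ in that component are orthogonally related.

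Next I would pick an arbitrary $\mathcal W\in\mathrm{Gr}_{k_1,k_2}(\mathcal H)$ and any isometry $\varphi\in\mathrm O(\mathcal H)$ with $\varphi(\mathcal W)=\mathcal V_a$, and apply Corollary~\ref{thm:esistenzatrivializzazioniortogonali} to obtain a $C^k$ lift $\Phi:[a,b]\to\mathrm O(\mathcal H)$ with $\Phi_t(\mathcal W)=\mathcal V_t$ for all $t$ and $\Phi_a=\varphi$. Setting $\mathfrak I=P_{\mathcal W}-P_{\mathcal W^\perp}\in\mathfrak S(\mathcal H)$, the cogredient action computation
\[
\Phi_t\mathfrak I\Phi_t^*=P_{\Phi_t(\mathcal W)}-P_{\Phi_t(\mathcal W)^\perp}=P_{\mathcal V_t}-P_{\mathcal V_t^\perp}=\mathfrak I_t
\]
holds for every $t$. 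Finally, defining $U_t=\Phi_t^*=\Phi_t^{-1}$ (which is still $C^k$ and orthogonal) yields $U_t\mathfrak I_tU_t^*=\mathfrak I$ for all $t\in[a,b]$, as required.

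The only nontrivial ingredient is the lifting step, but that work has already been done in Corollary~\ref{thm:esistenzatrivializzazioniortogonali} via the fibration $\beta_{\mathcal W}:\mathrm O(\mathcal H)\to\mathrm{Gr}_{k_1,k_2}(\mathcal H)$; the main content of the present statement is really the recognition that smoothness of the path of symmetries is, via the affine relation $P_{\mathcal V_t}=\tfrac12(\mathfrak I_t+I)$, identical to smoothness of the corresponding path of projections, so the hypothesis of Proposition~\ref{thm:smoothnessequivalent} is trivially met. Hence no further obstacle arises.
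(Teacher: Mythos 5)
Your proposal is correct and follows essentially the same route as the paper: the corollary is obtained by transporting the problem through the homeomorphism $\mathfrak S(\mathcal H)\ni\mathfrak I_t\mapsto\Ker(\mathfrak I_t-I)\in\mathrm{Gr}(\mathcal H)$ (with inverse $\mathcal V\mapsto P_{\mathcal V}-P_{\mathcal V^\perp}$), which intertwines the $\mathrm O(\mathcal H)$-actions, and then lifting via Corollary~\ref{thm:esistenzatrivializzazioniortogonali}. Your explicit observation $P_{\mathcal V_t}=\tfrac12(\mathfrak I_t+I)$, together with the final passage $U_t=\Phi_t^*$, fills in exactly the translation the paper leaves implicit, so no gap remains.
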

\end{section}

\end{document}